\newcommand{\NN}{\mathbb{N}}
\newcommand{\NNo}{\NN_0}
\newcommand{\ZZ}{\mathbb{Z}}
\newcommand{\RR}{\mathbb{R}}
\newcommand{\eps}{\varepsilon}
\newcommand{\G}{\Gamma}
\newcommand{\X}{\mathcal{X}}
\newcommand{\Sn}{\mathcal{S}_n}
\newcommand{\Sq}{\Sn^{(q)}}
\newcommand{\A}{\mathcal{A}}
\newcommand{\T}{\mathcal{T}}
\newcommand{\gp}[1]{\langle#1\rangle}
\newcommand{\ov}[1]{\overline{#1}}
\newcommand{\wt}[1]{\widetilde{#1}}
\theoremstyle{definition}
\newtheorem{de}{\normalsize Definition}[section]
\newtheorem{lem}[de]{\normalsize Lemma}
\newtheorem{prop}[de]{\normalsize Proposition}
\newtheorem{theo}[de]{\normalsize Theorem}
\newtheorem{cor}[de]{\normalsize Corollary}
\newenvironment{customcor}[1]
{\innercustomcor}
{\endinnercustomcor}
\newenvironment{customthm}[1]
{\innercustomthm}
{\endinnercustomthm}
\newcommand{\subjclass}[2][1991]{%
	\let\@oldtitle\@title%
	\gdef\@title{\@oldtitle\footnotetext{#1 \emph{Mathematics subject classification.} #2.}}%
}
\newcommand{\keywords}[1]{%
	\let\@@oldtitle\@title%
	\gdef\@title{\@@oldtitle\footnotetext{\emph{Key words and phrases.} #1.}}%
}
\author{Boris Colombari}
\title{A diagrammatical characterization of Milnor invariants}
\date{}
\begin{document}
\subjclass[2020]{57K12, 57K16}
\keywords{Milnor invariants, concordance, welded links, arrow calculus}
\maketitle

\begin{abstract}
The goal of this paper is to give a diagrammatical characterization of the information given by the Milnor invariants of links and string links. More precisely, we describe when two string links have equal Milnor invariants of length $\leq q$ and when a link has trivial Milnor invariants of lenght $\leq q$. This will be done through the use of welded knot theory, involving the notions of arrow calculus and $w_q$--concordance introduced by J-B. Meilhan and A. Yasuhara. These results is to be compared to the classification of links up to $C_q$--concordance obtained by J. Conant, R. Schneiderman and P. Teichner.
\end{abstract}

\section*{Introduction}

In 1954, J. Milnor defined the now well-known Milnor invariants of a link in~\cite{LGMil}, encoding the information given by the longitudes of the link in the coefficients of some formal power series. These longitudes are part of the peripheral system of the link, which is composed of its fundamental group, meridians and longitudes. Although topological in nature, the peripheral system can be described in a combinatorial way using the Wirtinger presentation of the group for a diagram representing the link. This is the approach that will be used in this paper, as we will deal with objects that are defined diagrammatically. \\

The goal of this paper is to give some new insights on the type of information the Milnor invariants are able to detect, which is related to the study of knotted objects in $4$ dimensions. Indeed the characterization will be made through the use of welded objects, which can be seen as a diagrammatical intermediary between classical knots and (the ribbon subclass of) knotted surfaces in $\RR^4$. In \cite{RTwSL}, B. Audoux, P. Bellingeri, J-B. Meilhan and E. Wagner gave a diagrammatical characterization of the non-repeating Milnor invariants in terms of link-homotopy on welded string links, which consists in allowing strands to cross themselves. They proved that two string links have equal non-repeating Milnor invariants if and only if they are link-homotopic as welded string links. In this paper we obtain similar diagrammatical characterizations of the general Milnor invariants of length $\leq q$, using the notion of $w_q$--concordance on welded string links. In~\cite{CRPSL}, B. Audoux and J-B. Meilhan also gave a characterization of the reduced peripheral system of links in terms of link-homotopy, and we obtain a similar result for the $q$--nilpotent peripheral system of links in terms of $w_q$--concordance, leading to a diagrammatical characterization of links with trivial Milnor invariants of length $\leq q$. \\

The main tool in this paper is the arrow calculus developped by J-B. Meilhan and A. Yasuhara in~\cite{MeiYas}, which describes surgeries on virtual diagrams by unitrivalent trees, giving an analogue on welded objects of the notion of clasper calculus introduced on classical links by K. Habiro in~\cite{Hab}. The notion of $w_q$--equivalence consists in allowing surgeries along oriented trees of degree $\geq q$, defining a finer equivalence relation on welded objects as $q$ increases. When combined with welded concordance, we obtain the notion of $w_q$--concordance, which we will use to characterize the Milnor invariants of length $\leq q$ on welded objects. More precisely, we obtain the following result:

\begin{customthm}{1}
The Milnor invariants of length $\leq q$ are complete invariants of $w_q$--concordance on welded string links.
\end{customthm}

As classical (string) links inject into welded ones (see Theorem 1.B of~\cite{GouPolVir} for the case of virtual knots, which extends to welded objects), this gives a diagrammatical characterization for classical string links with equal Milnor invariants:

\begin{customcor}{1}
Two classical string links have equal Milnor invariants of length $\leq q$ if and only if they are $w_q$--concordant when considered as welded string links.
\end{customcor}

As proven by A. Casejuane and J-B. Meilhan in the upcoming paper \cite{FTIwSLRT} following~\cite{Cas21}, for $q\leq 3$ welded string links are classified up to $w_q$--equivalence by their finite type invariants of order $<q$, and it is conjectured to still be true for $q\geq 4$. Theorem 1 shows that among these finite type invariants, the Milnor invariants are able to distinguish welded string links up to $w_q$--concordance. The others invariants, namely the closure invariants which are extracted from the normalized Alexander polynomials of the welded long knots obtained by putting some of the components end to end, are needed to distinguish between two welded string links which are concordant but not $w_q$--equivalent. \\

Moreover, combining Theorem 1 with results from~\cite{MeiYas} and~\cite{Cas21} we have the following corollary on the finite type concordance invariants of welded string links:

\begin{customcor}{2}
For two welded string links $L$ and $L'$, the following statements are equivalent:
\begin{enumerate}
\item $L$ and $L'$ are $w_q$--concordant,
\item $L$ and $L'$ have the same Milnor invariants of length $\leq q$,
\item $L$ and $L'$ the same finite type concordance invariants of degree $<q$.
\end{enumerate}
\end{customcor}

This can be seen as a welded analogue of the result obtained by N. Habegger and G. Masbaum in~\cite{TKIMI}, stating that all rational finite type concordance invariants of string links are given by the Milnor invariants. \\

We also obtain a classification result similar to Theorem 1 on welded links, but in terms of $q$--nilpotent peripheral systems instead of Milnor invariants. More precisely, we use the equivalence class of $q$--nilpotent peripheral systems under a natural equivalence relation which removes the ambiguity coming from the choice of meridians (see Section~\ref{Sec31}).

\begin{customthm}{2}
The equivalence class of $q$--nilpotent peripheral system is a complete invariant of $w_q$--concordance on welded links.
\end{customthm}

The Milnor invariants can still be used to detect the unlink modulo $w_q$--equivalence:

\begin{customcor}{3}
A welded link is $w_q$--concordant to the unlink if and only if it has vanishing Milnor invariants of length $\leq q$.
\end{customcor}

As in the case of string links, we obtain a diagrammatical charaterization for classical links with trivial Milnor invariants:

\begin{customcor}{4}
A classical link has vanishing Milnor invariants of length $\leq q$ if and only if it is $w_q$--concordant to the unlink when considered as a welded link.
\end{customcor}

This final result is to be compared to the classification of classical links up to $C_q$--concordance in terms of Milnor invariants and higher order Sato-Levine and Arf invariants obtained by J. Conant, R. Schneiderman and P. Teichner in~\cite{ConSchTei}. The notion of $C_q$--concordance is obtained by combining $C_q$--equivalence, which comes from clasper surgeries described by K. Habiro in~\cite{Hab}, and concordance. This construction inspired that of $w_q$--concordance through arrow calculus on welded objects. To illustrate the role of these additional invariants, we give an example at the end of the paper of a link which is $w_3$--concordant to the unlink as a welded link, but not $C_3$--concordant to it. \\

This paper is divided in three main sections. Section~\ref{Sec1} is dedicated to the introduction of the objects which will be used throughout the paper, starting with some algebraic properties of commutators in Section~\ref{Sec11}. In Section~\ref{Sec12}, we define welded links and string links, give their representation by virtual diagrams and then by arrow and tree presentations coming from arrow calculus. In Section~\ref{Sec13}, we define the notion of concordance and $w_q$--concordance for welded objects. Section~\ref{Sec2} is dedicated to the classification of welded string links up to $w_q$--concordance. It starts with the introduction of the Milnor invariants, then we prove that any tree presentation of a welded string link is $w_q$--concordant to an ascending presentation, which is a prerequisite to the proof of Theorem 1. Section~\ref{Sec3} is dedicated to the classification of welded links up to $w_q$--concordance, starting with the definition and properties of $q$--nilpotent peripheral systems in Section~\ref{Sec31}, then proving that any tree presentation of a welded link is $w_q$--equivalent to a sorted presentation and giving a classification of these presentations in Section~\ref{Sec32}. \\

\textit{Acknowledgements.} This article summarizes some results obtained during my PhD thesis. I would like to thank my thesis tutor B. Audoux for his guidance and helpful advice in the writing of this paper, as well as J-B. Meilhan, A. Yasuhara and M. Chrisman for their insightful comments on a previous version of this paper. I am also grateful to l'Institut de Mathématiques de Marseille for hosting me during the redaction of my PhD thesis, which was supported by l'\'{E}cole Normale Supérieure de Cachan.

\section{Notations and settings}\label{Sec1}

\subsection{Algebraic notions}\label{Sec11}

For elements $a,b$ in a group $G$, we denote by $a^b:=b^{-1}ab$ the \emph{conjugate of $a$ by $b$}, and $[a,b]:=a^{-1}b^{-1}ab$ the \emph{commutator of $a$ and $b$}. For subsets $A,B$ of $G$, we denote by $[A,B]$ the normal subgroup of $G$ generated by elements $[a,b]$ for $a\in A$ and $b\in B$. \\

The \emph{lower central series of $G$}, denoted by $(\G_qG)_{q\geq 1}$,  is defined by induction on $q$ by $\G_1G:=G$ and $\G_{q+1}G:=[G,\G_qG]$ for $q\geq 1$. The $\G_qG$ are normal subgroups of $G$, and we denote by $N_qG:=G/\G_qG$ the \emph{nilpotent quotients of $G$}. The subgroup $\G_qG$ contains all iterated commutators of weight $\geq q$, where the weight corresponds to the number of elements. For example, $[[g_1,g_2],g_3]$ is a commutator of weight $3$, and $[[g_1,g_2],[g_3,g_4]]$ and $[g_1,[[g_2,g_3],g_4]]$ are commutators of weight $4$. The classification of ascending and sorted presentations in Sections~\ref{Sec2} and \ref{Sec3} will require the following description of elements of $\G_qG$, which follows from well known identites given in \cite{MKS}:

\begin{prop}\label{gencom}
If a group $G$ is generated by elements $g_1,\ldots,g_n$, any element of $\G_qG$ can be written as a product of commutators of weight $\geq q$ on $g_1^{\pm 1},\ldots,g_n^{\pm 1}$.
\end{prop}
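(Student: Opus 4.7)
The plan is to prove this by induction on $q$, using the standard commutator identities
\[
[ab, c] = [a, c]^b [b, c], \quad [a, bc] = [a, c]\, [a, b]^c, \quad [a, b]^{-1} = [b, a].
\]
For the base case $q = 1$, I would simply note that $\G_1 G = G$ is generated by the $g_i^{\pm 1}$, which are themselves commutators of weight $1$.

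For the inductive step, I would assume the claim for $q$ and take $x \in \G_{q+1} G = [G, \G_q G]$. Since this is a normal subgroup of $G$, one can write $x$ as a product of commutators $[g, h]^{\pm 1}$ with $g \in G$ and $h \in \G_q G$. Writing $g = s_1 \cdots s_k$ as a word in the $g_j^{\pm 1}$ and applying the first identity iteratively would give
\[
[g, h] = [s_1, h]^{s_2 \cdots s_k} \, [s_2, h]^{s_3 \cdots s_k} \cdots [s_k, h].
\]
By the induction hypothesis, $h$ is a product $c_1 \cdots c_m$ of commutators of weight $\geq q$; applying the second identity iteratively would then decompose each $[s_j, h]$ as a product of conjugates of commutators of the form $[s_j, c_l]$, each of which is a commutator of weight $\geq q+1$.

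The main obstacle will be that the expansion above produces many conjugates, and one has to ensure these do not spoil the weight bound. I would handle this via the elementary identity $y^{-1} c y = [y, c^{-1}] \cdot c$: conjugating a commutator $c$ of weight $w$ by a single generator $y$ yields a product of two commutators of weights $w+1$ and $w$, both at least $w$ (noting that $c^{-1}$ has the same weight as $c$ by the third identity). A short induction on the word length of $y$ then shows that for any $y \in G$, conjugation by $y$ sends a product of commutators of weight $\geq w$ to another such product. Feeding this back into the decomposition of $[g, h]$ resolves all conjugates and exhibits $x$ as a product of commutators of weight $\geq q + 1$, completing the induction.
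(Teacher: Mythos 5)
Your proof is correct and is exactly the argument the paper has in mind: the paper does not write out a proof at all, but simply asserts that the statement "follows from well known identities given in [MKS]", and those are precisely the identities $[ab,c]=[a,c]^b[b,c]$, $[a,bc]=[a,c][a,b]^c$, $[a,b]^{-1}=[b,a]$ that you use, together with the observation that conjugation preserves products of commutators of weight $\geq w$ via $y^{-1}cy=[y,c^{-1}]c$. Your write-up supplies the induction that the paper leaves to the reference, and I see no gap in it.
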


For $n\geq 1$, let $F_n$ denote the free group on $n$ generators $m_1,\ldots,m_n$. \\
%For $1\leq i\leq n$, we denote by $m_i^{\ast}:F_n\to\ZZ$ the group homomorphism sending $x_i$ to $1$ and the $x_j$'s to $0$ for $j\neq i$. For $q\geq 2$, this homomorphism factors through the projection $F_n\to N_qF_n$, and we denote again by $x_i^{\ast}:N_qF_n\to\ZZ$ the induced homomorphism.
We denote by $\Sn:=\ZZ\gp{\gp{X_1,\ldots,X_n}}$ the $\ZZ$--algebra of formal power series with $\ZZ$ coefficients on $n$ non-commutating variables. We denote by $U_1(\Sn)$ the set of power series with constant term equal to $1$, which is a subgroup of the multiplicative group of units of $\Sn$. \\
We denote by $\X$ the two-sided ideal of $\Sn$ of power series whose constant term is zero. For $q\geq 1$, $\X^q$ is the two-sided ideal of power series whose monomials are of degree greater than or equal to $q$. We denote by $\Sq:=\Sn/\X^q$ the $\ZZ$--algebra obtained from $\Sn$ where all monomials of degree at least $q$ are made equal to zero. We denote by $U_1(\Sq)$ the subgroup of units of $\Sq$ with contant term equal to $1$. Note that we have a canonical isomorphism $U_1(\Sq)\simeq U_1(\Sn)/(1+\X^q)$. \\

We will consider the \emph{Magnus expansion} $E:F_n\to U_1(\Sn)$, defined as the group homomorphism sending $m_i$ to $1+X_i$. It is easily checked (see Section 5.5 of \cite{MKS}) that $E$ is injective, so $E:F_n\to\text{Im}(E)$ is an isomorphism. Moreover, it follows from results in Section 5.7 of \cite{MKS} that $E(\G_qF_n)=\G_q\text{Im}(E)=\text{Im}(E)\cap(1+\X^q)$. As a result, we have:

\begin{prop}\label{injMagexp}
For $q\geq 1$, the Magnus expansion induces a homomorphism $E_q:N_qF_n\to U_1(\Sq)$, which is injective.
\end{prop}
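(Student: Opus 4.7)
The plan is to deduce both the well-definedness and the injectivity of $E_q$ as formal consequences of the identity $E(\G_qF_n)=\text{Im}(E)\cap(1+\X^q)$ already recalled just before the statement, together with the injectivity of $E$ itself. No new group-theoretic or combinatorial input should be needed.

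First I would set up the diagram. Let $\pi_F:F_n\to N_qF_n=F_n/\G_qF_n$ and $\pi_S:U_1(\Sn)\to U_1(\Sn)/(1+\X^q)\simeq U_1(\Sq)$ be the canonical projections; the target isomorphism is the canonical one mentioned in the excerpt. To show that $E_q$ is a well-defined group homomorphism, it suffices, by the universal property of the quotient, to check that $\pi_S\circ E$ is a homomorphism (which is immediate, being a composition of homomorphisms) whose kernel contains $\G_qF_n$. But $\ker(\pi_S\circ E)=E^{-1}(1+\X^q)$, and by the cited equality $E(\G_qF_n)=\text{Im}(E)\cap(1+\X^q)\subseteq 1+\X^q$, so indeed $\G_qF_n\subseteq\ker(\pi_S\circ E)$. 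This produces the desired factorization $E_q:N_qF_n\to U_1(\Sq)$.

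For injectivity, I would take $x\in F_n$ with $E_q(x\G_qF_n)=1$ in $U_1(\Sq)$. Unpacking, this means $E(x)\in 1+\X^q$, i.e.\ $E(x)\in\text{Im}(E)\cap(1+\X^q)$. Applying the equality $\text{Im}(E)\cap(1+\X^q)=E(\G_qF_n)$ from the other direction, there exists $y\in\G_qF_n$ with $E(x)=E(y)$. Since $E$ is injective on $F_n$, we conclude $x=y\in\G_qF_n$, so $x\G_qF_n$ is trivial in $N_qF_n$, giving injectivity.

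There is no real obstacle here: the whole statement is essentially a restatement of the inclusion $E(\G_qF_n)=\text{Im}(E)\cap(1+\X^q)$ in quotient form. The only thing worth verifying cleanly is that the canonical map $U_1(\Sn)/(1+\X^q)\to U_1(\Sq)$ really is a group isomorphism (not merely a bijection of sets), but this is a direct check since $1+\X^q$ is the kernel of the multiplicative reduction $U_1(\Sn)\to U_1(\Sq)$, a fact already asserted in the paragraph preceding the proposition.
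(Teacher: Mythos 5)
Your argument is correct and is exactly the one the paper intends: the proposition is stated there as an immediate consequence of the injectivity of $E$ and the identity $E(\G_qF_n)=\text{Im}(E)\cap(1+\X^q)$, and your proof is simply the careful unpacking of that deduction (well-definedness from one inclusion, injectivity from the other plus injectivity of $E$). Nothing is missing.
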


We will use this injective homomorphism $E_q$ to extract the Milnor invariants from the longitudes of welded objects, and conversely to recover the longitudes from the Milnor invariants.

\subsection{Representations of welded links and welded string links}\label{Sec12}

\subsubsection{Virtual diagrams}\label{Sec121}

In what follows, we fix a positive integer $n$, and $n$ points $p_1,\ldots,p_n$ in $(0,1)$ such that $<p_1<\ldots<p_n$.

\begin{de}\label{vrtdiag}
An \emph{$n$--component virtual link diagram} is an immersion $f:\{1,\ldots,n\}\times S^1\to\RR^2$ whose singularities are transverse double points labeled with an element of $\{+,-,v\}$. An \emph{$n$--component virtual string link diagram} is a proper immersion $f:\{1,\ldots,n\}\times[0,1]\to[0,1]\times[0,1]$ such that $f(i,\eps)=(p_i,\eps)$ for $1\leq i\leq n$ and $\eps=0,1$, and whose singularities are transverse double points labeled with an element of $\{+,-,v\}$.
\end{de}

We denote by $D$ such a diagram, and $D_i$ its $i^{th}$ component, given by $f(i,\cdot)$. This component is given the canonical orientation from $S^1$ for a link, or from $[0,1]$ for a string link. A double point is called a positive (resp. negative, resp. virtual) crossing if it is labeled with a $+$ (resp. $-$, resp. $v$). A positive or negative crossing is called classical, and is represented by erasing part of one of the strands (depending on the sign) at the crossing, while a virtual crossing is represented by drawing a circle around it :

\begin{center}
\begin{tikzpicture}
\draw [thick,-Stealth] (1,0) -- (0,1) ;
\fill [white] (0.5,0.5) circle (0.15) ;
\draw [thick,-Stealth] (0,0) -- (1,1) ;
\draw (0.5,-0.5) node{positive crossing} ;

\begin{scope}[xshift=3.5cm]
\draw [thick,-Stealth] (0,0) -- (1,1) ;
\fill [white] (0.5,0.5) circle (0.15) ;
\draw [thick,-Stealth] (1,0) -- (0,1) ;
\draw (0.5,-0.5) node{negative crossing} ;
\end{scope}

\begin{scope}[xshift=7cm]
\draw [thick,-Stealth] (0,0) -- (1,1) ;
\draw (0.5,0.5) circle (0.15) ;
\draw [thick,-Stealth] (1,0) -- (0,1) ;
\draw (0.5,-0.5) node{virtual crossing} ;
\end{scope}
\end{tikzpicture}
\end{center}

At a classical crossing, the strand that has been partly erased is called the \emph{lower strand}, and the other one is called the \emph{upper strand}. The preimage of the double point on the lower strand is called an \emph{undercrossing}, and the one on the upper strand is called an \emph{overcrossing}. We call \emph{arcs of $D$} the portions of strands delimited by the undercrossings.

%We denote by $vLD_n$ (resp. $vSLD_n$) the set of $n$--component virtual link (resp. string link) diagrams, up to isotopy and reparametrization.

\begin{de}\label{defwelded}
An \emph{$n$--component welded link} (resp. \emph{welded string link}) is an equivalence class of $n$--component virtual link (resp. string link) diagrams up to the local moves, and their left-right mirror images, displayed in Figure~\ref{Figwldmoves}.
%We denote by $w\Ln$ (resp. $w\SL$) the set of $n$--component welded links (resp. welded string links).

\begin{figure}
\centering
\begin{tikzpicture}
\draw [thick] (0,0) -- (0,1) ;
\draw [<->] (0.5,0.5) -- (1.3,0.5) ;
\draw (0.9,0.8) node{R1} ;
\draw [thick] (1.8,0) .. controls +(0,0.1) and +(-0.1,-0.2) .. (1.9,0.5) ;
\draw [thick] (1.9,0.5) .. controls +(0.2,0.4) and +(0,0.3) .. (2.3,0.5) ;
\fill [white] (1.9,0.5) circle (0.15) ;
\draw [thick] (2.3,0.5) .. controls +(0,-0.3) and +(0.2,-0.4) .. (1.9,0.5) ;
\draw [thick] (1.9,0.5) .. controls +(-0.1,0.2) and +(0,-0.1) .. (1.8,1) ;

\begin{scope}[xshift=3.7cm]
\draw [thick] (0,0) -- (0,1) ;
\draw [thick] (0.5,0) -- (0.5,1) ;
\draw [<->] (1,0.5) -- (1.8,0.5) ;
\draw (1.4,0.8) node{R2} ;
\draw [thick] (2.8,0) .. controls +(-0.2,0.1) and +(0,-0.2) .. (2.4,0.5) ;
\draw [thick] (2.4,0.5) .. controls +(0,0.2) and +(-0.2,-0.1) .. (2.8,1) ;
\fill [white] (2.55,0.8) circle (0.15) ;
\fill [white] (2.55,0.2) circle (0.15) ;
\draw [thick] (2.3,0) .. controls +(0.2,0.1) and +(0,-0.2) .. (2.7,0.5) ;
\draw [thick] (2.7,0.5) .. controls +(0,0.2) and +(0.2,-0.1) .. (2.3,1) ;
\end{scope}

\begin{scope}[xshift=7.9cm]
\draw [thick] (0.99,0) -- ++(-0.58,1);
\fill [white] (0.58,0.71) circle (0.12) ;
\draw [thick] (0.17,0) -- ++(0.58,1);
\fill [white] (0.34,0.29) circle (0.12) ;
\fill [white] (0.82,0.29) circle (0.12) ;
\draw [thick] (0,0.29) -- (1.16,0.29) ;
\draw [<->] (1.66,0.5) -- (2.46,0.5) ;
\draw (2.06,0.8) node{R3} ;
\draw [thick] (3.13,1) -- ++(0.58,-1);
\fill [white] (3.54,0.29) circle (0.12) ;
\draw [thick] (3.95,1) -- ++(-0.58,-1);
\fill [white] (3.78,0.71) circle (0.12) ;
\fill [white] (3.3,0.71) circle (0.12) ;
\draw [thick] (2.96,0.71) -- (4.12,0.71) ;
\end{scope}

\begin{scope}[yshift=-2cm]
\draw [thick] (0,0) -- (0,1) ;
\draw [<->] (0.5,0.5) -- (1.3,0.5) ;
\draw (0.9,0.8) node{vR1} ;
\draw [thick] (1.8,0) .. controls +(0,0.1) and +(-0.1,-0.2) .. (1.9,0.5) ;
\draw [thick] (1.9,0.5) .. controls +(0.2,0.4) and +(0,0.3) .. (2.3,0.5) ;
\draw (1.9,0.5) circle (0.15) ;
\draw [thick] (2.3,0.5) .. controls +(0,-0.3) and +(0.2,-0.4) .. (1.9,0.5) ;
\draw [thick] (1.9,0.5) .. controls +(-0.1,0.2) and +(0,-0.1) .. (1.8,1) ;
\end{scope}

\begin{scope}[xshift=3.7cm,yshift=-2cm]
\draw [thick] (0,0) -- (0,1) ;
\draw [thick] (0.5,0) -- (0.5,1) ;
\draw [<->] (1,0.5) -- (1.8,0.5) ;
\draw (1.4,0.8) node{vR2} ;
\draw [thick] (2.8,0) .. controls +(-0.2,0.1) and +(0,-0.2) .. (2.4,0.5) ;
\draw [thick] (2.4,0.5) .. controls +(0,0.2) and +(-0.2,-0.1) .. (2.8,1) ;
\draw (2.55,0.8) circle (0.15) ;
\draw (2.55,0.2) circle (0.15) ;
\draw [thick] (2.3,0) .. controls +(0.2,0.1) and +(0,-0.2) .. (2.7,0.5) ;
\draw [thick] (2.7,0.5) .. controls +(0,0.2) and +(0.2,-0.1) .. (2.3,1) ;
\end{scope}

\begin{scope}[xshift=7.9cm,yshift=-2cm]
\draw [thick] (0,0.29) -- (1.16,0.29) ;
\draw [thick] (0.17,0) -- ++(0.58,1);
\draw [thick] (0.99,0) -- ++(-0.58,1);
\draw (0.34,0.29) circle (0.12) ;
\draw (0.82,0.29) circle (0.12) ;
\draw (0.58,0.71) circle (0.12) ;
\draw [<->] (1.66,0.5) -- (2.46,0.5) ;
\draw (2.06,0.8) node{vR3} ;
\draw [thick] (2.96,0.71) -- (4.12,0.71) ;
\draw [thick] (3.95,1) -- ++(-0.58,-1);
\draw [thick] (3.13,1) -- ++(0.58,-1);
\draw (3.78,0.71) circle (0.12) ;
\draw (3.3,0.71) circle (0.12) ;
\draw (3.54,0.29) circle (0.12) ;
\end{scope}

\begin{scope}[xshift=1.15cm,yshift=-4cm]
\draw [thick] (0,0.29) -- (1.16,0.29) ;
\draw [thick] (0.99,0) -- ++(-0.58,1);
\draw [white,fill=white] (0.58,0.71) circle (0.12) ;
\draw [thick] (0.17,0) -- ++(0.58,1);
\draw (0.34,0.29) circle (0.12) ;
\draw (0.82,0.29) circle (0.12) ;
\draw [<->] (1.66,0.5) -- (2.46,0.5) ;
\draw (2.06,0.8) node{Mixed} ;
\draw [thick] (2.96,0.71) -- (4.12,0.71) ;
\draw [thick] (3.13,1) -- ++(0.58,-1);
\draw [white,fill=white] (3.54,0.29) circle (0.12) ;
\draw [thick] (3.95,1) -- ++(-0.58,-1);
\draw (3.78,0.71) circle (0.12) ;
\draw (3.3,0.71) circle (0.12) ;
\end{scope}

\begin{scope}[xshift=6.75cm,yshift=-4cm]
\draw [thick] (0.99,0) -- ++(-0.58,1);
\draw (0.58,0.71) circle (0.12) ;
\draw [thick] (0.17,0) -- ++(0.58,1);
\draw [white,fill=white] (0.34,0.29) circle (0.12) ;
\draw [white,fill=white] (0.82,0.29) circle (0.12) ;
\draw [thick] (0,0.29) -- (1.16,0.29) ;
\draw [<->] (1.66,0.5) -- (2.46,0.5) ;
\draw (2.06,0.8) node{OC} ;
\draw [thick] (3.13,1) -- ++(0.58,-1);
\draw (3.54,0.29) circle (0.12) ;
\draw [thick] (3.95,1) -- ++(-0.58,-1);
\draw [white,fill=white] (3.78,0.71) circle (0.12) ;
\draw [white,fill=white] (3.3,0.71) circle (0.12) ;
\draw [thick] (2.96,0.71) -- (4.12,0.71) ;
\end{scope}
\end{tikzpicture}
\caption{Welded Reidemeister moves.}\label{Figwldmoves}
\end{figure}
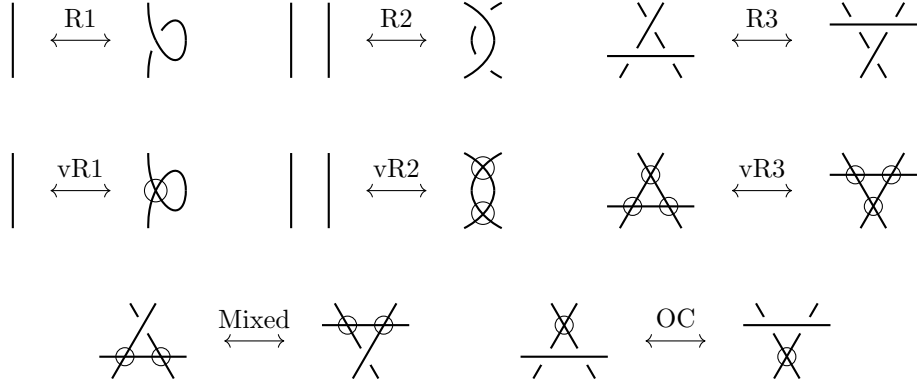
\end{de}

\begin{de}\label{GDgroup}
We associate a group $G_D$ to a virtual diagram $D$, defined by a presentation similar to the Wirtinger presentation of the fundamental group for classical links. More precisely, we associate one generator to each arc of $D$, and one relation $c=b^{a^{\eps}}$ for each classical crossing, where $a$ is the generator associated to the upper strand, $b$ (resp. $c$) is the generator associated to the lower strand preceding (resp. following) the crossing, and $\eps$ is the sign of the crossing.
\end{de}

As we will often need to consider the product of the conjugating elements involved in such relations along a subinterval of a component, we introduce the following notation:

\begin{de}\label{subintword}
For a subinterval $J$ of a component of a virtual diagram $D$, we denote by $w_J$ the word on the generators of $G_D$ and their inverse obtained by taking the product of the generators associated to the overstrands at the undercrossings on $J$, in the order given by the orientation of the diagram.
\end{de}

We will make use of such words throughout this paper, sometimes as elements of $G_D$ or $N_qG_D$, and sometimes strictly as words on the generators and their inverse, with no relations between them. For example, on the diagram illustrated below we have $w_J=ab^{-1}bc$.

\begin{center}
\begin{tikzpicture}
\draw [thick,] (2,0.5) -- (2,-0.5) ;
\fill [white] (2,0) circle (0.12) ;
\draw [thick,] (3,0.5) -- (3,-0.5) ;
\fill [white] (3,0) circle (0.12) ;
\draw [thick,-stealth] (-0.3,0) -- (4.5,0) ;
\draw [thick,red] (0.2,0) -- (4,0) ;
\draw [red] (0.5,0.25) node{$J$} ;
\fill [white] (1,0) circle (0.12) ;
\draw [thick,-stealth] (1,0.5) -- (1,-0.5) ;
\draw (1,0.8) node{$a$} ;
\fill [white] (1.5,0) circle (0.12) ;
\draw [thick,-stealth] (1.5,-0.5) -- (1.5,0.5) ;
\draw (1.5,0.8) node{$b$} ;
\fill [white] (2.5,0) circle (0.12) ;
\draw [thick,-stealth] (2.5,0.5) -- (2.5,-0.5) ;
\draw (2.5,0.8) node{$b$} ;
\fill [white] (3.5,0) circle (0.12) ;
\draw [thick,-stealth] (3.5,0.5) -- (3.5,-0.5) ;
\draw (3.5,0.8) node{$c$} ;
\end{tikzpicture}
\end{center}

For a virtual diagram $D$, we denote by $a_{ij}$ the generators of $G_D$ associated to the arcs of $D_i$. Since the relations in the definition of $G_D$ are conjugating relations between generators associated to the same component, the homomorphisms $a_i^{\ast}:\gp{a_{ij}}\to\ZZ$ defined by $a_i^{\ast}(a_{kj})=\delta_{ik}$ induce homomorphisms $a_i^{\ast}:G_D\to\ZZ$. \\

Let $D$ be a virtual diagram of a welded link. For a given point $p_i$ on an arc of $D_i$ between two undercrossings, we denote by $m_i$ the generator of $G_D$ associated to this arc, and call it a \emph{meridian of $D_i$}. Let $D_i^{p_i}$ denote the subinterval $D_i\setminus\{p_i\}$ of $D_i$. We call \emph{longitude of $D_i$ associated to $m_i$} the element $\ell_i:=m_i^{-k}w_{D_i^{p_i}}\in G_D$, where $k:=a_i^{\ast}(w_{D_i^{p_i}})$. These notions of meridian and longitude come from the case of a classical link $L$ in $\RR^3$, where a meridian of a component $L_i$ is the boundary of a small disk which is positively transverse to $L_i$ in one point, whose corresponding element in $G_L:=\pi_1(L)$ is the generator associated to that point, and a longitude of $L_i$ is a loop which runs parallel to it and has linking number with $L_i$ equal to $0$. The correcting term $m_i^{-k}$ in $\ell_i$ corresponds to the linking number constraint of the classical case. A peripheral system of $D$ is given by its group $G_D$ and a choice of meridians and longitudes. Up to isomorphisms which conjugate each meridian and associated longitude by the same element, it is an invariant of the welded link represented by $D$. \\

We can also consider the notion of peripheral system for a virtual diagram $D$ of a welded string link. In this case, there is a natural choice for the meridians: $m_i$ is the generator associated to the bottom arc of $D_i$, and the \textquotedblleft longitude" $\ell_i$ is defined as $\ell_i:=m_i^{-k}w_{D_i}$, with $k:=a_i^{\ast}(w_{D_i})$. With these notation, the generator associated to the top arc of $D_i$ is equal to $m_i^{\ell_i}$ in $G_D$.

\subsubsection{Arrow calculus}\label{Sec122}

In \cite{MeiYas}, J-B. Meilhan and A. Yasuhara developed the notion of \emph{arrow calculus}, inspired by clasper calculus on classical objects, which consists in adding arrows, called w-arrows, on virtual diagrams representing some surgeries as indicated below (note that in \cite{MeiYas}, the virtual crossings are represented without the small circle):

\begin{center}
\begin{tikzpicture}
\draw [thick,-Stealth] (0,0) -- (0,1) ;
\draw [thick] (1,0) -- (1,1) ;
\draw [-Stealth] (0,0.5) -- (1,0.5) ;
\draw (1.7,0.5) node{$=$} ;
\draw [thick] (3.5,0) -- (3.5,0.35) ;
\draw [thick] (3.5,0.35) -- (2.5,0.35) ;
\draw [thick] (2.5,0.35) .. controls +(-0.25,0) and +(-0.25,0) .. (2.5,0.65) ;
\draw [thick] (2.5,0.65) -- (3.5,0.65) ;
\draw [thick] (3.5,0.65) -- (3.5,1) ;
\fill [white] (2.5,0.65) circle (0.1) ;
\draw (2.5,0.35) circle (0.1) ;
\draw [thick,-Stealth] (2.5,0) -- (2.5,1) ;

\begin{scope}[xshift=5.5cm]
\draw [thick,-Stealth] (0,1) -- (0,0) ;
\draw [thick] (1,0) -- (1,1) ;
\draw [-Stealth] (0,0.5) -- (1,0.5) ;
\draw (1.7,0.5) node{$=$} ;
\draw [thick] (3.5,0) -- (3.5,0.35) ;
\draw [thick] (3.5,0.35) -- (2.5,0.35) ;
\draw [thick] (2.5,0.35) .. controls +(-0.25,0) and +(-0.25,0) .. (2.5,0.65) ;
\draw [thick] (2.5,0.65) -- (3.5,0.65) ;
\draw [thick] (3.5,0.65) -- (3.5,1) ;
\fill [white] (2.5,0.35) circle (0.1) ;
\draw (2.5,0.65) circle (0.1) ;
\draw [thick,-Stealth] (2.5,1) -- (2.5,0) ;
\end{scope}
\end{tikzpicture}
\end{center}

Note that the relative position of the classical and virtual crossings involved in the surgery depend on the orientation of the strand containing the tail of the w-arrow. Moreover, the side of the strand from which the w-arrow is attached is also relevant. A w-arrow may cross other strands of the virtual diagram or other w-arrows, in which case virtual crossings are created when performing the surgery. \\

A w-arrow may also contain a dot, representing a twist, which modifies the surgery as indicated below. A w-arrow may contain multiple dots (inducing as many twists), but since the twists can by cancelled pairwise by using $vR2$ moves, we may assume that each w-arrow contains at most one dot.

\begin{center}
\begin{tikzpicture}
\draw [thick,-Stealth] (0,0) -- (0,1) ;
\draw [thick] (1,0) -- (1,1) ;
\draw [-Stealth] (0,0.5) -- (1,0.5) ;
\fill [black] (0.5,0.5) circle (0.07) ;
\draw (1.7,0.5) node{$=$} ;
\draw [thick] (3.5,0) -- (3.5,0.35) ;
\draw [thick] (3.5,0.35) -- (3.2,0.35) ;
\draw [thick] (3.2,0.35) -- (2.8,0.65) ;
\draw [thick] (2.8,0.65) -- (2.5,0.65) ;
\draw [thick] (2.5,0.65) .. controls +(-0.25,0) and +(-0.25,0) .. (2.5,0.35) ;
\draw [thick] (2.5,0.35) -- (2.8,0.35) ;
\draw [thick] (2.8,0.35) -- (3.2,0.65) ;
\draw [thick] (3.2,0.65) -- (3.5,0.65) ;
\draw [thick] (3.5,0.65) -- (3.5,1) ;
\fill [white] (2.5,0.65) circle (0.1) ;
\draw (2.5,0.35) circle (0.1) ;
\draw (3,0.5) circle (0.1) ;
\draw [thick,-Stealth] (2.5,0) -- (2.5,1) ;
\end{tikzpicture}
\end{center}

\begin{de}\label{arrowpres}
An \emph{arrow presentation} of a virtual diagram $D$ is a pair $(V,\A)$, where $V$ is a virtual diagram with no classical crossing and $\A$ is a family of w-arrows on $V$ such that the diagram obtained from $V$ by performing the surgeries corresponding to $\A$ is $D$. Two arrow presentations are called \emph{equivalent} if they give equivalent virtual diagrams.
\end{de}

As noted in \cite{MeiYas}, any classical crossing on a virtual diagram can be replaced by a virtual crossing and a w-arrow next to it:

\begin{center}
\begin{tikzpicture}
\draw [thick] (0,1.5) -- (1.5,0) ;
\fill [white] (0.75,0.75) circle (0.15) ;
\draw [thick,-Stealth] (0,0) -- (1.5,1.5) ;
\draw (2.75,1.1) node{$vR2$} ;
\draw [->] (2.4,0.75) -- (3.1,0.75) ;

\begin{scope}[xshift=4cm]
\draw [thick] (1.5,0) .. controls +(-0.5,0.5) and +(0.5,-0.5) .. (1.1,1.1) ;
\draw [thick] (1.1,1.1) .. controls +(-0.4,0.4) and +(-0.4,0.4) .. (0.75,0.75) ;
\draw [thick] (0.75,0.75) .. controls +(0.4,-0.4) and +(0.4,-0.4) .. (0.4,0.4) ;
\draw [thick] (0.4,0.4) .. controls +(-0.5,0.5) and +(0.5,-0.5) .. (0,1.5) ;
\fill [white] (1.1,1.1) circle (0.15) ;
\draw (0.75,0.75) circle (0.12) ;
\draw (0.4,0.4) circle (0.12) ;
\draw [thick,-Stealth] (0,0) -- (1.5,1.5) ;
\draw (2.75,1.1) node{surgery} ;
\draw [<-] (2.4,0.75) -- (3.1,0.75) ;
\end{scope}

\begin{scope}[xshift=8cm]
\draw [thick] (0,1.5) -- (1.5,0) ;
\draw (0.75,0.75) circle (0.12) ;
\draw [thick,-Stealth] (0,0) -- (1.5,1.5) ;
\draw [-Stealth] (1.1,1.1) .. controls +(0.2,-0.2) and +(0.2,0.2) .. (1.1,0.4) ;
\end{scope}
\end{tikzpicture}
\end{center}

As a result, any virtual diagram admits an arrow presentation. In Section 4 of~\cite{MeiYas}, Meilhan and Yasuhara give a list of local moves on w-arrows which result in equivalent arrow presentations. First, the extremities of w-arrows can pass through virtual crossings, but in general not through classical crossings or other w-arrow extremities. A twist can be moved along a w-arrow, passing through the strands or w-arrows that it intersects.

\begin{center}
\begin{tikzpicture}
\draw [thick] (-0.5,0) -- (0.5,0) ;
\draw [thick] (0,-0.5) -- (0,0.5) ;
\draw (0,0) circle (0.1) ;
\draw [-Stealth] (-0.7,0.5) .. controls +(0.3,-0.05) and +(0,0.35) .. (-0.25,0) ;
\draw (1.25,0) node{$=$} ;
\draw [thick] (2,0) -- (3,0) ;
\draw [thick] (2.5,-0.5) -- (2.5,0.5) ;
\draw (2.5,0) circle (0.1) ;
\draw [-Stealth] (1.8,0.5) .. controls +(0.5,-0.05) and +(0,0.35) .. (2.75,0) ;

\begin{scope}[yshift=-2cm]
\draw [thick] (-0.5,0) -- (0.5,0) ;
\draw [thick] (0,-0.5) -- (0,0.5) ;
\draw (0,0) circle (0.1) ;
\draw [Stealth-] (-0.7,0.5) .. controls +(0.3,-0.05) and +(0,0.35) .. (-0.25,0) ;
\draw (1.25,0) node{$=$} ;
\draw [thick] (2,0) -- (3,0) ;
\draw [thick] (2.5,-0.5) -- (2.5,0.5) ;
\draw (2.5,0) circle (0.1) ;
\draw [Stealth-] (1.8,0.5) .. controls +(0.5,-0.05) and +(0,0.35) .. (2.75,0) ;
\end{scope}

\begin{scope}[xshift=6cm,yshift=-0.5cm]
\draw [-Stealth] (0,0) -- (1.5,0) ;
\fill [black] (0.25,0) circle (0.07) ;
\draw [thick] (0.5,-0.5) -- (0.5,0.5) ;
\draw [thick] (0.75,-0.5) -- (0.75,0.5) ;
\draw [thick] (1,-0.5) -- (1,0.5) ;
\draw (0.75,-0.8) node{$\underbrace{}$} ;
\draw (0.75,-1.2) node{strands or w-arrows} ;
\draw (2.25,0) node{$=$} ;
\draw [-Stealth] (3,0) -- (4.5,0) ;
\draw [thick] (3.3,-0.5) -- (3.3,0.5) ;
\draw [thick] (3.55,-0.5) -- (3.55,0.5) ;
\draw [thick] (3.8,-0.5) -- (3.8,0.5) ;
\fill [black] (4.1,0) circle (0.07) ;
\end{scope}
\end{tikzpicture}
\end{center}

Changing the side of the strand from which the extremity of a w-arrow is attached has the following effect:

\begin{center}
\begin{tikzpicture}
\draw [thick] (0,0) -- (0,1) ;
\draw [-Stealth] (0,0.3) .. controls +(-0.35,0) and +(-0.35,0) .. (0,0.7) ;
\draw (0,0.3) -- (1,0.3) ;
\draw (1.6,0.5) node{$=$} ;
\draw [thick] (2.2,0) -- (2.2,1) ;
\draw [-Stealth] (3.2,0.5) -- (2.2,0.5) ;
\fill [black] (2.7,0.5) circle (0.07) ;
\draw (1.6,-0.5) node{head reversal} ;

\begin{scope}[xshift=5cm]
\draw [thick] (0,0) -- (0,1) ;
\draw (0,0.3) .. controls +(-0.35,0) and +(-0.35,0) .. (0,0.7) ;
\draw [-Stealth] (0,0.3) -- (1,0.3) ;
\draw (1.6,0.5) node{$=$} ;
\draw [thick] (2.2,0) -- (2.2,1) ;
\draw [-Stealth] (2.2,0.5) -- (3.2,0.5) ;
\draw (1.6,-0.5) node{tail reversal} ;
\end{scope}
\end{tikzpicture}
\end{center}

Finally, the four local moves illustrated on Figure~\ref{Figwarrowmoves} correspond to the classical Reidemeister moves and the $OC$ move, where a white dot indicates that the w-arrow may or may not contain a twist. \\

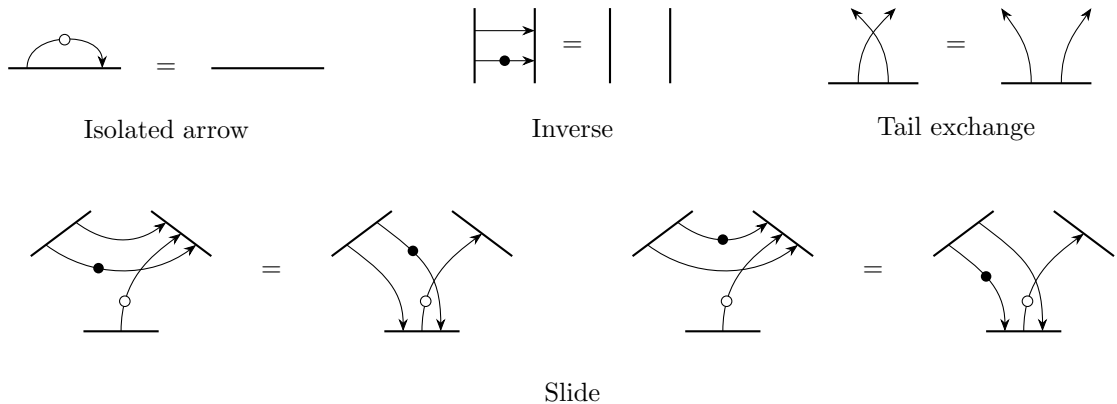
\begin{figure}
\centering
\begin{tikzpicture}
\begin{scope}[xshift=0.5cm]
\draw [thick] (0,0.2) -- (1.5,0.2) ;
\draw [-Stealth] (0.25,0.2) .. controls +(0,0.5) and +(0,0.5) .. (1.25,0.2) ;
\draw [fill=white] (0.75,0.58) circle (0.07) ;
\draw (2.1,0.2) node{$=$} ;
\draw [thick] (2.7,0.2) -- (4.2,0.2) ;
\draw (2.1,-0.6) node{Isolated arrow} ;
\end{scope}

\begin{scope}[xshift=8cm]
\draw [thick] (-1.3,0) -- (-1.3,1) ;
\draw [thick] (-0.5,0) -- (-0.5,1) ;
\draw [-Stealth] (-1.3,0.3) -- (-0.5,0.3) ;
\draw [-Stealth] (-1.3,0.7) -- (-0.5,0.7) ;
\fill [black] (-0.9,0.3) circle (0.07) ;
\draw (0,0.5) node{$=$} ;
\draw [thick] (0.5,0) -- (0.5,1) ;
\draw [thick] (1.3,0) -- (1.3,1) ;
\draw (0,-0.6) node{Inverse} ;
\end{scope}

\begin{scope}[xshift=13.1cm]
\draw [thick] (-1.7,0) -- (-0.5,0) ;
\draw [-Stealth] (-0.9,0) .. controls +(0,0.5) and +(0.3,-0.3) .. (-1.4,1) ;
\draw [-Stealth] (-1.3,0) .. controls +(0,0.5) and +(-0.3,-0.3) .. (-0.8,1) ;
\draw (0,0.5) node{$=$} ;
\draw [thick] (0.6,0) -- (1.8,0) ;
\draw [-Stealth] (1,0) .. controls +(0,0.5) and +(0.2,-0.3) .. (0.6,1) ;
\draw [-Stealth] (1.4,0) .. controls +(0,0.5) and +(-0.2,-0.3) .. (1.8,1) ;
\draw (0,-0.6) node{Tail exchange} ;
\end{scope}

\begin{scope}[xshift=2cm,yshift=-3.3cm]
\draw [thick] (-0.5,0) -- (0.5,0) ;
\draw [thick] (-1.2,1) -- (-0.4,1.6) ;
\draw [thick] (1.2,1) -- (0.4,1.6) ;
\draw [-Stealth] (0,0) .. controls +(0,0.7) and +(-0.4,-0.3) .. (0.8,1.3) ;
\draw [-Stealth] (-0.6,1.45) .. controls +(0.4,-0.3) and +(-0.4,-0.3) .. (0.6,1.45) ;
\draw [-Stealth] (-1,1.15) .. controls +(0.6,-0.45) and +(-0.6,-0.45) .. (1,1.15) ;
\draw [fill=white] (0.05,0.4) circle (0.07) ;
\fill [black] (-0.3,0.84) circle (0.07) ;
\draw (2,0.8) node{$=$} ;
\draw [thick] (3.5,0) -- (4.5,0) ;
\draw [thick] (2.8,1) -- (3.6,1.6) ;
\draw [thick] (5.2,1) -- (4.4,1.6) ;
\draw [-Stealth] (4,0) .. controls +(0,0.7) and +(-0.4,-0.3) .. (4.8,1.3) ;
\draw [-Stealth] (3.4,1.45) .. controls +(0.6,-0.45) and +(0,0.8) .. (4.25,0) ;
\draw [-Stealth] (3,1.15) .. controls +(0.4,-0.3) and +(0,0.6) .. (3.75,0) ;
\draw [fill=white] (4.05,0.4) circle (0.07) ;
\fill [black] (3.88,1.07) circle (0.07) ;
\draw (6,-0.8) node{Slide} ;
\end{scope}

\begin{scope}[xshift=10cm,yshift=-3.3cm]
\draw [thick] (-0.5,0) -- (0.5,0) ;
\draw [thick] (-1.2,1) -- (-0.4,1.6) ;
\draw [thick] (1.2,1) -- (0.4,1.6) ;
\draw [-Stealth] (0,0) .. controls +(0,0.7) and +(-0.4,-0.3) .. (0.8,1.3) ;
\draw [-Stealth] (-0.6,1.45) .. controls +(0.4,-0.3) and +(-0.4,-0.3) .. (0.6,1.45) ;
\draw [-Stealth] (-1,1.15) .. controls +(0.6,-0.45) and +(-0.6,-0.45) .. (1,1.15) ;
\draw [fill=white] (0.05,0.4) circle (0.07) ;
\fill [black] (0,1.22) circle (0.07) ;
\draw (2,0.8) node{$=$} ;
\draw [thick] (3.5,0) -- (4.5,0) ;
\draw [thick] (2.8,1) -- (3.6,1.6) ;
\draw [thick] (5.2,1) -- (4.4,1.6) ;
\draw [-Stealth] (4,0) .. controls +(0,0.7) and +(-0.4,-0.3) .. (4.8,1.3) ;
\draw [-Stealth] (3.4,1.45) .. controls +(0.6,-0.45) and +(0,0.8) .. (4.25,0) ;
\draw [-Stealth] (3,1.15) .. controls +(0.4,-0.3) and +(0,0.6) .. (3.75,0) ;
\draw [fill=white] (4.05,0.4) circle (0.07) ;
\fill [black] (3.5,0.73) circle (0.07) ;
\end{scope}
\end{tikzpicture}
\caption{Local moves on w-arrows.}\label{Figwarrowmoves}
\end{figure}

As stated in Theorem 4.5 of~\cite{MeiYas}, two arrow presentations represent equivalent diagrams if and only if they are related by these arrow moves. As a result, we can study welded objects through their arrow presentations up to these local moves.

\paragraph{Remark:}
Using the virtual Reidemeister moves $vR1$, $vR2$ and $vR3$, and the fact that w-arrow extremities can pass through virtual crossings, we can assume that the underlying virtual diagram $V$ in an arrow presentation is trivial, i.e. a disjoint union of intervals for a string link and a disjoint union of circles for a link. As a result, when working on arrow presentations with a set number of components, we will only focus on the w-arrows and consider that the underlying virtual diagram is trivial. \\

The use of arrow calculus in this paper revolves around the notion of w-tree, which represent \textquotedblleft stacked commutators" of w-arrows. A w-tree is a unitrivalent tree whose degree one vertices are positioned on the diagram. One of these vertices is the head of the w-tree, and the other ones are tails. There is a cyclic order on the three edges at each trivalent vertex, given by the orientation of the plane in which the diagram is represented. Each edge of the tree can contain a twist. The degree of a w-tree $T$ is defined as the number of tails of $T$. See Figure~\ref{Figwtree} for an example of a w-tree of degree $3$.

\begin{figure}
\centering
\begin{tikzpicture}
\draw [thick] (0,0) -- (0.6,0) ;
\draw [thick] (1,0) -- (1.6,0) ;
\draw [thick] (2,0.5) -- (2.6,0.5) ;
\draw [thick] (1.3,2.3) -- (1.9,2.3) ;
\draw (0.3,0) -- (0.8,0.7) -- (1.3,0) ;
\fill [black] (0.55,0.35) circle (0.06) ;
\fill [black] (0.8,0.7) circle (0.04) ;
\draw (0.8,0.7) -- (1.6,1.4) -- (2.3,0.5) ;
\fill [black] (1.95,0.95) circle (0.06) ;
\fill [black] (1.6,1.4) circle (0.04) ;
\draw [-Stealth] (1.6,1.4) -- (1.6,2.3) ;
\fill [black] (1.6,1.85) circle (0.06) ;
\end{tikzpicture}
\caption{A w-tree of degree $3$.}\label{Figwtree}
\end{figure}
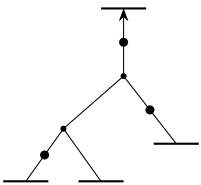

A w-tree represents a collection of w-arrows, obtained by expanding the w-tree iteratively as indicated on Figure~\ref{Figexpwtree}, where $T_1$ and $T_2$ are two unitrivalent trees, and the $T_i$'s obtained after expanding are parallel copies of $T_i$. The surgery along a w-tree is then defined as the surgery along its expansion.

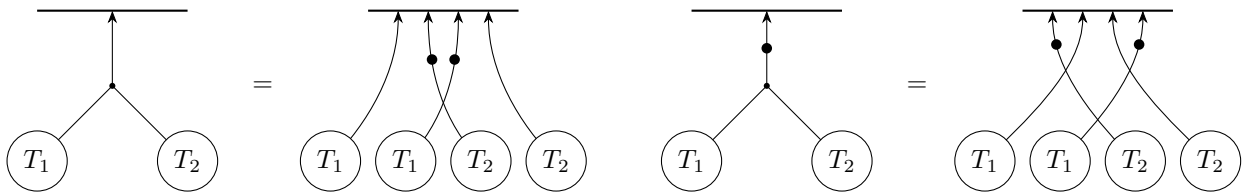
\begin{figure}
\centering
\begin{tikzpicture}
\draw (0,0) -- (1,1) -- (2,0) ;
\fill [black] (1,1) circle (0.04) ;
\draw [-Stealth] (1,1) -- (1,2) ;
\draw [thick] (0,2) -- (2,2) ;
\draw [fill=white] (0,0) circle (0.4) ;
\draw (0,0) node{$T_1$} ;
\draw [fill=white] (2,0) circle (0.4) ;
\draw (2,0) node{$T_2$} ;
\draw (3,1) node{$=$} ;
\draw [thick] (4.4,2) -- (6.4,2) ;
\draw [-Stealth] (3.9,0) .. controls +(0.5,0.5) and +(0,-0.8) .. (4.8,2) ;
\draw [-Stealth] (4.9,0) .. controls +(0.5,0.5) and +(0,-0.8) .. (5.6,2) ;
\draw [-Stealth] (5.9,0) .. controls +(-0.5,0.5) and +(0,-0.8) .. (5.2,2) ;
\draw [-Stealth] (6.9,0) .. controls +(-0.5,0.5) and +(0,-0.8) .. (6,2) ;
\fill [black] (5.25,1.35) circle (0.07) ;
\fill [black] (5.55,1.35) circle (0.07) ;
\draw [fill=white] (3.9,0) circle (0.4) ;
\draw (3.9,0) node{$T_1$} ;
\draw [fill=white] (4.9,0) circle (0.4) ;
\draw (4.9,0) node{$T_1$} ;
\draw [fill=white] (5.9,0) circle (0.4) ;
\draw (5.9,0) node{$T_2$} ;
\draw [fill=white] (6.9,0) circle (0.4) ;
\draw (6.9,0) node{$T_2$} ;

\begin{scope}[xshift=8.7cm]
\draw (0,0) -- (1,1) -- (2,0) ;
\fill [black] (1,1) circle (0.04) ;
\draw [-Stealth] (1,1) -- (1,2) ;
\fill [black] (1,1.5) circle (0.07) ;
\draw [thick] (0,2) -- (2,2) ;
\draw [fill=white] (0,0) circle (0.4) ;
\draw (0,0) node{$T_1$} ;
\draw [fill=white] (2,0) circle (0.4) ;
\draw (2,0) node{$T_2$} ;
\draw (3,1) node{$=$} ;
\draw [thick] (4.4,2) -- (6.4,2) ;
\draw [-Stealth] (3.9,0) .. controls +(0.5,0.5) and +(0,-0.8) .. (5.2,2) ;
\draw [-Stealth] (4.9,0) .. controls +(0.5,0.5) and +(0,-0.8) .. (6,2) ;
\draw [-Stealth] (5.9,0) .. controls +(-0.5,0.5) and +(0,-0.8) .. (4.8,2) ;
\draw [-Stealth] (6.9,0) .. controls +(-0.5,0.5) and +(0,-0.8) .. (5.6,2) ;
\fill [black] (4.85,1.55) circle (0.07) ;
\fill [black] (5.95,1.55) circle (0.07) ;
\draw [fill=white] (3.9,0) circle (0.4) ;
\draw (3.9,0) node{$T_1$} ;
\draw [fill=white] (4.9,0) circle (0.4) ;
\draw (4.9,0) node{$T_1$} ;
\draw [fill=white] (5.9,0) circle (0.4) ;
\draw (5.9,0) node{$T_2$} ;
\draw [fill=white] (6.9,0) circle (0.4) ;
\draw (6.9,0) node{$T_2$} ;
\end{scope}
\end{tikzpicture}
\caption{Expanding a w-tree.}\label{Figexpwtree}
\end{figure}

\begin{de}\label{treepres}
A \emph{tree presentation} of a virtual diagram $D$ is a pair $(V,\T)$, where $V$ is a virtual diagram with no classical crossing and $\T$ is a family of w-trees on $V$ such that the diagram obtained from $V$ by performing the surgeries corresponding to $\T$ is $D$. Two tree presentations are called \emph{equivalent} if they give equivalent virtual diagrams.
\end{de}

The presentation of the group $G_D$ can be adapted to take into account w-trees as follows: the arcs are now delimited not only by undercrossings but also by the heads of the w-trees. Moreover, there is a relation associated to the head of a w-tree, obtained by propagating the generators along the edges from the tails to the head, keeping track of the twists and forming a commutator at each trivalent vertex as illustrated in Figure~\ref{Figpropgen} (note that in~\cite{MeiYas}, $[a,b]$ is defined as $ab^{-1}a^{-1}b$ to match more closely with the expansion of w-trees). \\

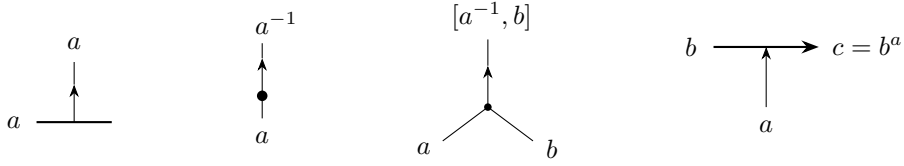
\begin{figure}
\centering
\begin{tikzpicture}
\draw [thick] (-0.5,0) -- (0.5,0) ;
\draw (-0.8,0) node{$a$} ;
\draw [-Stealth] (0,0) -- (0,0.5) ;
\draw (0,0.5) -- (0,0.8) ;
\draw (0,1.05) node{$a$} ;

\begin{scope}[xshift=2.5cm,yshift=0.05cm]
\draw [-Stealth] (0,0) -- (0,0.8) ;
\draw (0,0.8) -- (0,1) ;
\fill [black] (0,0.3) circle (0.07) ;
\draw (0,-0.25) node{$a$} ;
\draw (0.19,1.27) node{$a^{-1}$} ;
\end{scope}

\begin{scope}[xshift=5.5cm,yshift=0.2cm]
\draw (0,0) -- (-0.6,-0.45) ;
\draw (0,0) -- (0.6,-0.45) ;
\draw [-Stealth] (0,0) -- (0,0.55) ;
\draw (0,0.55) -- (0,0.9) ;
\fill [black] (0,0) circle (0.05) ;
\draw (-0.85,-0.55) node{$a$} ;
\draw (0.85,-0.55) node{$b$} ;
\draw (0.05,1.2) node{$[a^{-1},b]$} ;
\end{scope}

\begin{scope}[xshift=9.2cm]
\draw [thick,-Stealth] (-0.7,1) -- (0.7,1) ;
\draw [-Stealth] (0,0.2) -- (0,1) ;
\draw (-1,1) node{$b$} ;
\draw (0,-0.05) node{$a$} ;
\draw (1.35,1.02) node{$c=b^a$} ;
\end{scope}
\end{tikzpicture}
\caption{Propagating generators along w-trees.}\label{Figpropgen}
\end{figure}

As shown in Lemma 5.17 in \cite{MeiYas}, we can exchange the sides of the incoming edges at a trivalent vertex by adding a twist on each of the three edges at this vertex (see Figure~\ref{Figantisym}). This can be used to extrapolate some local moves on w-trees by changing the orientation of certain strands in order to get to a configuration where an established local move can be performed. \\

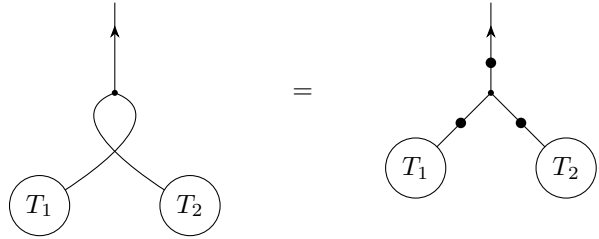
\begin{figure}
\centering
\begin{tikzpicture}
\draw (-1,-1.5) .. controls +(0.7,0.4) and +(0.8,-0.3) .. (0,0) ;
\draw (1,-1.5) .. controls +(-0.7,0.4) and +(-0.8,-0.3) .. (0,0) ;
\draw [-Stealth] (0,0) -- (0,0.9) ;
\draw (0,0.8) -- (0,1.2) ;
\fill [black] (0,0) circle (0.04) ;
\draw [fill=white] (-1,-1.5) circle (0.4) ;
\draw (-1,-1.5) node{$T_1$} ;
\draw [fill=white] (1,-1.5) circle (0.4) ;
\draw (1,-1.5) node{$T_2$} ;

\draw (2.5,0) node{$=$} ;

\begin{scope}[xshift=5cm]
\draw (-1,-1) -- (0,0) -- (1,-1) ;
\draw [-Stealth] (0,0) -- (0,0.9) ;
\draw (0,0.8) -- (0,1.2) ;
\fill [black] (0,0) circle (0.04) ;
\draw [fill=white] (-1,-1) circle (0.4) ;
\draw (-1,-1) node{$T_1$} ;
\draw [fill=white] (1,-1) circle (0.4) ;
\draw (1,-1) node{$T_2$} ;
\fill [black] (0,0.4) circle (0.07) ;
\fill [black] (-0.4,-0.4) circle (0.07) ;
\fill [black] (0.4,-0.4) circle (0.07) ;
\end{scope}
\end{tikzpicture}
\caption{Changing the cyclic order at a trivalent vertex.}\label{Figantisym}
\end{figure}

Local moves on w-arrows induce local moves on w-trees, and such moves are detailed in Section 5.2 of \cite{MeiYas}. We will focus here on the moves that will be needed in the following sections of this paper. First, the inverse move generalizes to w-trees: two parallel copies of the same w-tree which only differ by a twist on the last edge cancel each other out (see Figure~\ref{Figinvwtree}).

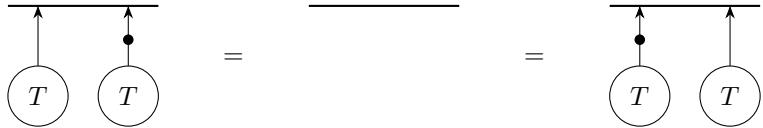
\begin{figure}
\centering
\begin{tikzpicture}
\draw [thick] (0,1.2) -- (2,1.2) ;
\draw [-Stealth] (0.4,0) -- (0.4,1.2) ;
\draw [fill=white] (0.4,0) circle (0.4) ;
\draw (0.4,0) node{$T$} ;
\draw [-Stealth] (1.6,0) -- (1.6,1.2) ;
\draw [fill=white] (1.6,0) circle (0.4) ;
\fill [black] (1.6,0.75) circle (0.07) ;
\draw (1.6,0) node{$T$} ;

\draw (3,0.5) node{$=$} ;
\draw [thick] (4,1.2) -- (6,1.2) ;
\draw (7,0.5) node{$=$} ;

\draw [thick] (8,1.2) -- (10,1.2) ;
\draw [-Stealth] (8.4,0) -- (8.4,1.2) ;
\draw [fill=white] (8.4,0) circle (0.4) ;
\fill [black] (8.4,0.75) circle (0.07) ;
\draw (8.4,0) node{$T$} ;
\draw [-Stealth] (9.6,0) -- (9.6,1.2) ;
\draw [fill=white] (9.6,0) circle (0.4) ;
\draw (9.6,0) node{$T$} ;
\end{tikzpicture}
\caption{Inverse move for w-trees}\label{Figinvwtree}
\end{figure}

The tail exchange move directly generalizes to w-trees: we can permute the positions of two adjacent tails on a strand, whether they belong to the same w-tree or to two distinct ones. The permutation of two adjacent heads can be performed by adding a w-tree whose degree is the sum of the degrees of the two w-trees, as illustrated in Figure~\ref{Figpermheads}. \\

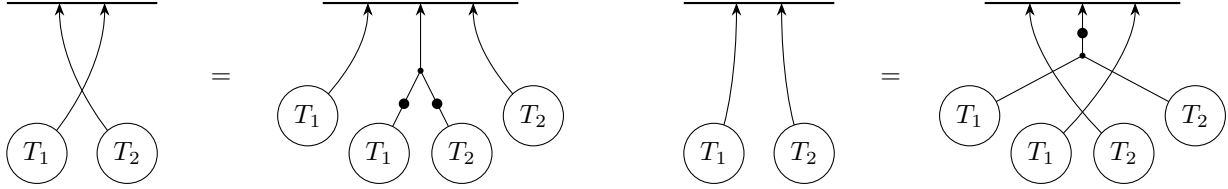
\begin{figure}
\centering
\begin{tikzpicture}
\draw [thick] (0,2) -- (2,2) ;
\draw [-Stealth] (0.4,0) .. controls +(0.4,0.4) and +(0,-0.8) .. (1.3,2) ;
\draw [fill=white] (0.4,0) circle (0.4) ;
\draw (0.4,0) node{$T_1$} ;
\draw [-Stealth] (1.6,0) .. controls +(-0.4,0.4) and +(0,-0.8) .. (0.7,2) ;
\draw [fill=white] (1.6,0) circle (0.4) ;
\draw (1.6,0) node{$T_2$} ;
\draw (2.85,1) node{$=$} ;
\draw [thick] (4.2,2) -- (6.8,2) ;
\draw [-Stealth] (4,0.5) .. controls +(0.2,0.2) and +(0,-0.7) .. (4.8,2) ;
\draw [fill=white] (4,0.5) circle (0.4) ;
\draw (4,0.5) node{$T_1$} ;
\draw (4.95,0) -- (5.5,1.1) -- (6.05,0) ;
\fill [black] (5.5,1.1) circle (0.04) ;
\fill [black] (5.28,0.66) circle (0.07) ;
\fill [black] (5.72,0.66) circle (0.07) ;
\draw [-Stealth] (5.5,1.1) -- (5.5,2) ;
\draw [fill=white] (4.95,0) circle (0.4) ;
\draw (4.95,0) node{$T_1$} ;
\draw [fill=white] (6.05,0) circle (0.4) ;
\draw (6.05,0) node{$T_2$} ;
\draw [-Stealth] (7,0.5) .. controls +(-0.2,0.2) and +(0,-0.7) .. (6.2,2) ;
\draw [fill=white] (7,0.5) circle (0.4) ;
\draw (7,0.5) node{$T_2$} ;

\begin{scope}[xshift=8.8cm]
\draw [thick] (0.2,2) -- (2.2,2) ;
\draw [-Stealth] (0.6,0) .. controls +(0.2,0.4) and +(0,-0.8) .. (0.9,2) ;
\draw [fill=white] (0.6,0) circle (0.4) ;
\draw (0.6,0) node{$T_1$} ;
\draw [-Stealth] (1.8,0) .. controls +(-0.2,0.4) and +(0,-0.8) .. (1.5,2) ;
\draw [fill=white] (1.8,0) circle (0.4) ;
\draw (1.8,0) node{$T_2$} ;
\draw (2.95,1) node{$=$} ;
\draw [thick] (4.2,2) -- (6.8,2) ;
\draw [-Stealth] (4.95,0) .. controls +(0.4,0.3) and +(0,-0.7) .. (6.2,2) ;
\draw [fill=white] (4.95,0) circle (0.4) ;
\draw (4.95,0) node{$T_1$} ;
\draw (4,0.5) -- (5.5,1.3) -- (7,0.5) ;
\fill [black] (5.5,1.3) circle (0.04) ;
\draw [-Stealth] (5.5,1.3) -- (5.5,2) ;
\fill [black] (5.5,1.6) circle (0.07) ;
\draw [fill=white] (4,0.5) circle (0.4) ;
\draw (4,0.5) node{$T_1$} ;
\draw [fill=white] (7,0.5) circle (0.4) ;
\draw (7,0.5) node{$T_2$} ;
\draw [-Stealth] (6.05,0) .. controls +(-0.4,0.3) and +(0,-0.7) .. (4.8,2) ;
\draw [fill=white] (6.05,0) circle (0.4) ;
\draw (6.05,0) node{$T_2$} ;
\end{scope}
\end{tikzpicture}
\caption{Permuting two adjacent heads.}\label{Figpermheads}
\end{figure}

Finally, as detailed in the proof of Lemma 7.10 of \cite{MeiYas}, we can permute the head of a degree $k$ w-tree $T_1$ with a tail of a degree $k'$ w-tree $T_2\neq T_1$ by adding a degree $k+k'$ w-tree, and a collection $C$ of w-trees of degree $>k+k'$ whose heads are positioned next to the head of $T_2$, as indicated in Figure~\ref{Figpermht}.

\begin{figure}
\centering
\begin{tikzpicture}
\draw [thick] (0.1,2) -- (1.6,2) ;
\draw [-Stealth] (0,0.5) .. controls +(0.5,0.2) and +(0,-0.7) .. (1,2) ;
\draw [fill=white] (0,0.5) circle (0.4) ;
\draw (0,0.5) node{$T_1$} ;
\draw [thick] (3,1.3) -- (3,-0.4) ;
\draw (0.5,2) .. controls +(0,-0.7) and +(-0.4,0.3) .. (1.5,0.5) ;
\draw [-Stealth] (1.5,0.5) -- (3,0.5) ;
\draw [fill=white] (1.5,0.5) circle (0.4) ;
\draw (1.5,0.5) node{$T_2$} ;

\draw (4.5,0.5) node{$=$} ;

\begin{scope}[xshift=6.5cm]
\draw [thick] (-0.1,2) -- (1.7,2) ;
\draw [-Stealth] (-0.3,0.5) .. controls +(0.3,0.4) and +(0,-0.7) .. (0.3,2) ;
\draw [fill=white] (-0.3,0.5) circle (0.4) ;
\draw (-0.3,0.5) node{$T_1$} ;
\draw [thick] (3,1.4) -- (3,-2.15) ;
\draw (1.2,2) .. controls +(0,-0.5) and +(-0.3,0.3) .. (1.8,1) ;
\draw [-Stealth] (1.8,1) -- (3,1) ;
\draw [fill=white] (1.8,1) circle (0.4) ;
\draw (1.8,1) node{$T_2$} ;
\draw (0.75,2) -- (0.75,0.4) ;
\draw (0,-0.5) -- (0.75,0.4) -- (1.8,0) ;
\fill [black] (0.75,0.4) circle (0.04) ;
\draw [-Stealth] (1.8,0) -- (3,0) ;
\draw [fill=white] (0,-0.5) circle (0.4) ;
\draw (0,-0.5) node{$T_1$} ;
\draw [fill=white] (1.8,0) circle (0.4) ;
\draw (1.8,0) node{$T_2$} ;
\draw [-Stealth] (1.8,-0.8) -- (3,-0.8) ;
\draw (2.55,-1.12) node{$\vdots$} ;
\draw [-Stealth] (1.8,-1.6) -- (3,-1.6) ;
\draw [fill=white] (1.8,-1.2) ellipse (0.4 and 0.6) ;
\draw (1.8,-1.2) node{$C$} ;
\end{scope}
\end{tikzpicture}
\caption{Permuting a head and a tail from two different w-trees.}\label{Figpermht}
\end{figure}
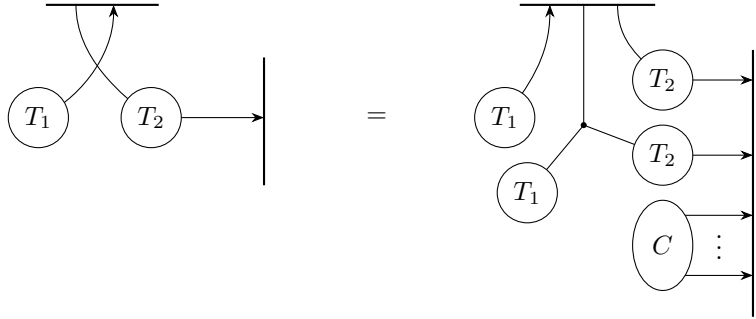

\paragraph{Remark:}
We will also need to be able to permute the head of a w-tree with one of its tails. We will see in Section~\ref{Sec2} that, up to the addition of w-trees of higher degree, it can be done using the preceding moves up to $w_q$--equivalence (see Definition~\ref{wqequiv}) and welded concordance, which is defined in Section~\ref{Sec13}. \\

One important aspect of the moves described above is that the heads of the additional w-trees of higher degree are positioned next to some pre-existing heads. \\

The notion of $w_q$--equivalence is defined in Section 7.1 of \cite{MeiYas} as follows:

\begin{de}\label{wqequiv}
Two virtual diagrams are said to be \emph{$w_q$--equivalent} if one can be obtained from the other by a finite sequence of welded Reidemeister moves and surgeries along w-trees of degree $\geq q$.
\end{de}

An important step in the classification of welded string links (resp. welded links) will be showing that any virtual diagram of welded string link (resp. welded link) is equivalent up to $w_q$--equivalence and welded concordance to a diagram admitting an ascending tree presentation (resp. is $w_q$--equivalent to a diagram admitting a sorted tree presentation), defined as follows:

\begin{de}\label{defascsort}
A tree presentation of a welded string link is called \emph{ascending} if on each component, the heads of the w-trees are positioned above the tails (the strands being oriented upwards). A tree presentation of a welded link is called \emph{sorted} if on each component, the heads and the tails of the w-trees are positioned in two disjoint intervals.
\end{de}

The corresponding notion for diagrams of tangles was introduced by D. Bar-Natan, Z. Dansco and R. van der Veen in~\cite{OtUT}, where it is called \emph{over-then-under tangles}, as all the overcrossings appear before the undercrossings on each strand. The closely related notions of ascending and sorted Gauss diagrams were introduced in~\cite{RTwSL} and \cite{CRPSL} respectively.

\subsection{Welded concordance}\label{Sec13}

In this section, we introduce the notion of welded concordance between two welded links or two welded string links. This notion was defined on virtual objects by J. Carter and S. Kamada in \cite{SEKSVKC}, and adapted to welded objects by Gaudreau in \cite{Gau} and by H. Boden and M. Chrisman in \cite{BodChris}. \\

A \emph{cobordism of welded links} is a finite sequence of virtual diagrams of welded links (with potentially differing number of components), where two consecutive diagrams differ by one of the following moves:
\begin{itemize}
\item a welded Reidemeister move;
\item a \emph{birth} or a \emph{death}, i.e. the creation or deletion of a closed component with no crossings;
\item a \emph{saddle point}, i.e. the fusion of two components or separation of a component in two as illustrated below.
\end{itemize}
\begin{center}
\begin{tikzpicture}
\draw [thick,-stealth] (0,0) -- (0,2) ;
\draw [thick,-stealth] (0.7,2) -- (0.7,0) ;
\draw [blue,dashed] (0,1) -- (0.7,1) ;
\draw [->] (1.5,1) -- (2.4,1) ;
\draw [thick,-stealth] (3.2,0) .. controls +(0,1) and +(0,1) .. (3.9,0) ;
\draw [thick,-stealth] (3.9,2) .. controls +(0,-1) and +(0,-1) .. (3.2,2) ;
\end{tikzpicture}
\end{center}

In order to define cobordisms of welded string links, we need to introduce the notion of mixed virtual diagrams, which is defined in the same way as a virtual diagram of a welded string link, but with the addition of some (possibly none) oriented closed components. A \emph{cobordism of $n$--component welded string links} is a finite sequence of mixed virtual diagrams, each containing $n$ interval components, where two consecutive diagrams differ by one of the moves listed above. \\

Given a cobordism of welded links or welded string links, we can join the consecutive diagrams to form an immersed surface, capped by disks at births and deaths, and with saddles at saddle points. The surface $S$ contains a certain number of connected components $S_i$, with Euler characteristic $\chi(S_i)$.

\begin{de}\label{wldconcdef}
A \emph{welded concordance} is a cobordism of welded links (resp. welded string links) between two virtual diagrams of $n$--component welded links (resp. welded string links), satisfying the following conditions:
\begin{itemize}
\item the underlying surface $S$ has $n$ connected components, connecting the $i^{th}$ component of the first diagram to the $i^{th}$ component of the last diagram;
\item each connected component of $S$ is homeomorphic to an annulus $S^1\times I$ (resp. a disk $I\times I$).
\end{itemize}
\end{de}

The definition of a saddle point guaranties that the underlying surface of a cobordism of welded links (resp. welded string links) is orientable. Since the boundary of each connected component $S_i$ is a disjoint union of two circles (resp. a circle), its homeomorphic class is determined by its Euler characteristic, which must be $0$ (resp. $1$) for it to be an annulus (resp. a disk). We can compute $\chi(S_i)$ by counting the number of births, deaths and saddle points involving the connected component $S_i$. More precisely, for a welded link (resp. welded string link), the Euler characteristic of $S_i$ starts at $0$ (resp. $1$), increases by $1$ for each birth and each death, and decreases by $1$ for each saddle point. As a result, $S_i$ is homeomorphic to an annulus (resp. a disk) if and only if it involves as many births and deaths as saddle points.

\begin{de}
Two virtual diagrams are said to be \emph{$w_q$--concordant} if one can be obtained from the other by a finite sequence of $w_q$--equivalences and welded concordance.
\end{de}

\section{Classification of welded string links}\label{Sec2}

In this section, we introduce the Milnor invariants of welded string links, which give invariants of $w_q$--concordance. The main goal is to prove that the Milnor invariants of length $\leq q$ are complete invariants of $w_q$--concordance. In order to do that, we will show that any tree presentation of a welded string link is $w_q$--concordant to an ascending presentation, as the classification result on ascending presentations follows from the fact that longitudes can be read directly on diagrams. This will in turn give a diagrammatical characterization of classical string links with equal Milnor invariants. \\

Since this section is dedicated to welded string links and will not involve welded links, we refer to virtual diagrams of $n$--component welded string links as simply virtual diagrams. \\

%\subsection{Chen maps and Milnor invariants}\label{Sec21}

At the end of Section 1.2.1, we defined the peripheral system of a welded string link $L$ represented by a virtual diagram $D$. We will now use it to obtain the Milnor invariants of $L$. More precisely, we will consider the \emph{$q$--nilpotent peripheral system} of $L$, which consists of the triplet $(N_qG_D,\ov{m},\ov{\ell})$, where $\ov{m}=(\ov{m}_1,\ldots,\ov{m}_n)\in(N_qG_D)^n$ is the image of the meridians in the $q$--niloptent quotient of $G_D$, and $\ov{\ell}=(\ov{\ell}_1,\ldots,\ov{\ell}_n)\in(N_qG_D)^n$ is the image of the longitudes. \\

It is known that the $q$--nilpotent quotient $N_qG_D$ is generated by the $\ov{m}_i$'s, so each longitude $\ov{\ell}_j$ can be expresed as a word on the $\ov{m}_i$'s. The power series $E_q(\ov{\ell}_j)$ is then well defined modulo monomials of degree $\geq q$ (see~\cite{CAMIWL} for more details). For a sequence of indices $I=(i_1,\ldots,i_k,j)\in[\![1,n]\!]^{k+1}$ with $k<q$, we denote by $\mu_I(D)\in\ZZ$ the coefficient of the monomial $X_{i_1}\cdots X_{i_k}$ in $E_q(\ov{\ell}_j)$ the \emph{Milnor invariant of index $I$ of $D$}. The length of a Milnor invariant $\mu_I(D)$ is defined as the length of the sequence of indices $I$. \\

%[historical references] \\

Since the effect of a surgery along a w-tree of degree $k$ is to add a commutator of weight $k$ in one of the longitudes $\ell_j$, it does not modify $\ov{\ell}_j\in N_qG_D$ if $k\geq q$, and as such does not modify the Milnor invariants of length $\leq q$. As a result, we have the following, as noted in Remark 7.6 of~\cite{MeiYas}:

\begin{prop}\label{invwqconc}
The Milnor invariants of length $\leq q$ are invariants of $w_q$--equivalence on welded string links.
\end{prop}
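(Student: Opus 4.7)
The plan is to verify invariance separately for the two types of elementary moves appearing in $w_q$--equivalence: welded Reidemeister moves and surgeries along w-trees of degree $\geq q$. Welded Reidemeister moves leave the welded string link itself (and thus its peripheral system) unchanged, so they pose no difficulty. The real content is to control what a single surgery along a w-tree of degree $k \geq q$ does to the $q$--nilpotent peripheral system and consequently to the Milnor invariants.

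The first step is to unpack the effect of the surgery on the Wirtinger-type presentation of $G_D$. Let $D$ be a virtual diagram and $T$ a w-tree of degree $k$ whose head lies on component $j$, and let $D'$ be the result of the surgery along $T$. Expanding $T$ as in Figure~\ref{Figexpwtree} shows that the surgery replaces a trivial portion of $V$ by a bundle of w-arrow surgeries, creating new crossings and thus new generators and relations. Using the propagation rule of Figure~\ref{Figpropgen}, one checks that the new relations allow the new generators to be eliminated and produce an isomorphism $\phi\colon N_qG_{D'} \to N_qG_D$ which fixes each meridian $\ov{m}_i$, sends $\ov{\ell}_i'$ to $\ov{\ell}_i$ for $i \neq j$, and sends $\ov{\ell}_j'$ to $\ov{\ell}_j \cdot \ov{c}$, where $c$ is an iterated commutator of weight $k$ in the generators of $G_D$ read along $T$.

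Granting this, Proposition~\ref{gencom} applied to $G_D$, whose nilpotent quotient is generated by the $\ov{m}_i$'s, shows that $\G_qG_D$ contains all commutators of weight $\geq q$ in these generators. Since $k \geq q$, we obtain $\ov{c} = 1$ in $N_qG_D$ and hence $\phi(\ov{\ell}_j') = \ov{\ell}_j$. The $q$--nilpotent peripheral system is therefore preserved up to an isomorphism fixing the meridians, and since the Milnor invariants of length $\leq q$ are extracted as coefficients of $E_q(\ov{\ell}_j)$ expressed in the $\ov{m}_i$'s, they depend only on $\ov{\ell}_j \in N_qG_D$ by the injectivity of $E_q$ from Proposition~\ref{injMagexp}. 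They are therefore unchanged by the surgery.

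The main obstacle in this plan is the second step: verifying that the surgery changes $\ell_j$ in $G_D$ precisely by multiplication by a commutator of weight $k$ (modulo higher-order commutators). One has to track how the arcs, meridians and longitudes of $D$ are transformed by the w-arrow surgeries arising from the expansion of $T$, and confirm that the overall effect in $N_qG$ is the insertion of a single commutator of weight $k$. This is essentially a combinatorial unpacking of the statement that w-trees encode stacked commutators, which is built into the arrow-move calculus of Section~5 of~\cite{MeiYas}.
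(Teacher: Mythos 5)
Your proposal is correct and follows essentially the same route as the paper, which likewise reduces to the observation (citing Remark~7.6 of~\cite{MeiYas}) that a surgery along a degree~$k$ w-tree inserts a weight~$k$ commutator into one longitude, hence acts trivially on $N_qG_D$ and on the Milnor invariants of length $\leq q$ when $k\geq q$. The only difference is that you spell out the verification that the paper delegates to the arrow-calculus machinery of~\cite{MeiYas}.
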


%\subsection{Classification of welded string links up to $w_q$--concordance}\label{Sec22}

%In this section, we show that the Milnor invariants are invariants of welded concordance, then that every tree presentation of a welded string link is $w_q$--concordant to an ascending presentation. Using the fact that ascending presentations with equal Milnor invariants of length $\leq q$ are $w_q$--equivalent, we obtain the classification of welded string links up to $w_q$--concordance.

\begin{prop}\label{invconc}{\cite[Cor. 5.6]{MCIKSB}}
The Milnor invariants are welded concordance invariants.
\end{prop}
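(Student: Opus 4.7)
The strategy is to show that the $q$--nilpotent peripheral system $(N_qG_D,\ov{m},\ov{\ell})$ is preserved, up to the natural equivalence on peripheral systems, along any welded concordance. Since the Milnor invariants of length $\leq q$ are extracted from this triple via the injective Magnus expansion $E_q$ of Proposition~\ref{injMagexp}, such invariance would yield the proposition. The problem then reduces to showing invariance of the nilpotent peripheral system under each elementary cobordism move: welded Reidemeister moves, births, deaths, and saddle points.

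Welded Reidemeister moves leave $G_D$, the meridians and the longitudes unchanged by the very definition of these objects as welded invariants. A birth introduces a new component whose meridian is a free generator of the new group that does not appear in any Wirtinger relation of the pre-existing arcs, and dually a death removes a generator that did not interact with the others. In both cases the $q$--nilpotent peripheral system restricted to the $n$ components surviving throughout the cobordism is unaffected.

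The substantial case is the saddle point, at which two arcs of (possibly distinct) components are cut and reglued. Locally this introduces an identification between the meridians of the four involved arcs, together with a conjugating word read from the rest of the diagram. The main technical step is to show that the canonical morphism from the group of the diagram before the saddle to the group after the saddle (obtained by comparing both to a common intermediate presentation on the four-valent diagram carrying the saddle) induces an isomorphism on $N_q$, sending meridians to meridians and longitudes to conjugates of longitudes. This is the welded analogue of Stallings' classical argument: the added relation is, modulo commutators, the equality of two parallel meridians, so the induced map is an isomorphism on $H_1$ and surjective on $H_2$, hence an isomorphism on every nilpotent quotient.

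The main obstacle, which I expect to be the delicate point, is the global bookkeeping along the cobordism. The Euler characteristic constraint that each connected component of the underlying surface is a disk (for string links) must be used to pair births, deaths and saddles within each component so that the cumulative conjugating words introduced by the saddles collapse, modulo $\G_qG$, to the correct identification of initial and final meridians. Once this pairing is set up, an induction on the number of elementary moves, combined with the Stallings-type step of the previous paragraph at each saddle, gives the invariance of the $q$--nilpotent peripheral system; letting $q$ range over all positive integers then yields the invariance of all Milnor invariants.
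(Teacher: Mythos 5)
Your strategy---tracking the $q$--nilpotent peripheral system through the cobordism and invoking a Stallings-type criterion---is genuinely different from the paper's proof, but the pivotal step fails as stated. You claim that the canonical morphism from the group of the diagram before a saddle to the group after it induces an isomorphism on $N_q$. It cannot: a saddle changes the number of components of the diagram by one, and $N_2G_D=G_D/[G_D,G_D]$ is free abelian of rank equal to the number of components (the Wirtinger relations identify all generators of a given component in the abelianization). Hence the second nilpotent quotients before and after a single saddle are not even abstractly isomorphic, and the Stallings criterion ($H_1$--isomorphism plus $H_2$--epimorphism) does not apply move by move. It only applies when the initial and final diagrams are each compared to a single group attached to the \emph{whole} cobordism surface (topologically, the group of the complement of the Tube of the concordance; combinatorially, the group of the associated cut-diagram); it is in those two comparison maps that the hypothesis that every component of the surface is a disk (resp.\ an annulus) is used. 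Your last paragraph correctly senses that the global bookkeeping is the delicate point, but the fix is not a pairing of births, deaths and saddles restoring per-move isomorphisms --- it is the construction of this common ambient group, which your outline does not provide.

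For comparison, the paper does not run this argument directly: it checks that a welded concordance in the sense of Definition~\ref{wldconcdef} yields a cut-concordance in the sense of~\cite{MCIKSB} and then quotes Corollary~5.6 of that paper, which is exactly the Stallings-type statement for surfaces packaged in the cut-diagram formalism; the remark following the proposition also records Chrisman's topological proof via the Tube map, which is the rigorous version of what you are sketching. Your plan becomes correct once the chain of intermediate groups is replaced by the single group of the cobordism as the common target of the two comparison maps.
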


This can be established using the terminology of cut-diagrams introduced in~\cite{MCIKSB}, which generalizes Gauss diagrams in higher dimensions. It can easily be checked that our definition of welded concordance (Definition~\ref{wldconcdef}) between two welded string links represented by virtual diagrams implies the existence of a cut-diagram of a surface between the two, where the lines of double points correspond to the preimages of the undercrossings. As a result, the two welded string links are cut-concordant (see Definition 5.1 in~\cite{MCIKSB}), and by Corollary 5.6 of~\cite{MCIKSB} cut-concordance preserves the Milnor invariants, which proves Proposition~\ref{invconc}.

\paragraph{Remark:}
A topological proof of Proposition~\ref{invconc} was given in \cite{MCICHR} by M. Chrisman using the Tube map, which associates a knotted surface in $\RR^4$ to a virtual (or welded) object. This knotted surface has the same Milnor invariants as the diagrammatical object, and the invariance under topological concordance of surfaces follows from a theorem of Stallings. \\

We will now prove that the Milnor invariants of length $\leq q$ are complete invariants of $w_q$--concordance on welded string links. The main step in the proof is showing that any tree presentation is $w_q$--concordant to an ascending presentation, for which the classification result is relatively straightforward. \\

Using the moves described in Section~\ref{Sec12}, we have the following result:

\begin{prop}\label{permttht}
Let $T_1$ and $T_2$ be two different w-trees of respective degree $k$ and $k'$ in a tree presentation $(V,\T)$. Then $(V,\T)$ is equivalent to a tree presentation $(V,\T')$ which only differs from $(V,\T)$ by the permutation of two adjacent ends of $T_1$ and $T_2$ and the addition of w-trees of degree $\geq k+k'$ whose heads are positioned next to the heads of w-trees in $(V,\T)$. In particular, the degrees of the additional w-trees are greater than the degrees of $T_1$ and $T_2$.
\end{prop}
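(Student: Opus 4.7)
The plan is to run a case analysis on the types of the two adjacent ends being swapped: either two tails, two heads, or a head and a tail. Each case reduces to one of the local moves on w-trees presented in Section~\ref{Sec122}, coming from \cite{MeiYas}. In the tail--tail case, the tail exchange move (which generalizes directly from w-arrows to w-trees) allows one to swap the two tails without introducing any new w-tree, so the conclusion holds vacuously.

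In the head--head case, I would invoke the move illustrated in Figure~\ref{Figpermheads}: swapping the two adjacent heads produces a single new w-tree built by grafting parallel copies of $T_1$ and $T_2$ at a common trivalent vertex, with its head placed between the two (now swapped) heads of $T_1$ and $T_2$. This new tree has degree $k+k'$, and its head is adjacent to heads already present in $(V,\T)$.

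In the head--tail case, I would apply the move from Figure~\ref{Figpermht} (which underlies the proof of Lemma 7.10 of \cite{MeiYas}): the swap introduces a w-tree of degree $k+k'$ together with a collection $C$ of w-trees of degree strictly greater than $k+k'$, all of whose heads are placed next to the head of the tree contributing the tail. Every additional w-tree thus has degree $\geq k+k'$ and its head lies next to a pre-existing head in $(V,\T)$. Since $k,k'\geq 1$, this also yields the ``in particular'' statement that the degrees of the added trees strictly exceed both $k$ and $k'$.

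The main subtlety, and the step I expect to require the most care, is that each of these local moves is stated in a specific standard configuration (a specific cyclic order at each trivalent vertex, fixed orientations at the endpoints, no obstructing twists). To apply them in full generality, I would first use the auxiliary moves of Section~\ref{Sec122} to bring an arbitrary configuration into such a standard form: the antisymmetry move of Figure~\ref{Figantisym} to adjust the cyclic order at trivalent vertices, the head- and tail-reversal moves to flip orientations at endpoints, and the sliding/cancellation rules for twists. None of these preliminary adjustments creates new w-trees, so combining them with the three cases above produces the required presentation $(V,\T')$ with all additional w-trees of degree $\geq k+k'$ and heads next to pre-existing heads.
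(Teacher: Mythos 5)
Your proposal is correct and follows essentially the same route as the paper, which states the proposition as a direct consequence of the three local moves from Section~\ref{Sec122} (tail exchange, the head--head move of Figure~\ref{Figpermheads}, and the head--tail move of Figure~\ref{Figpermht}), with the auxiliary reversal and antisymmetry moves handling non-standard configurations. Your explicit case analysis and the degree bookkeeping match what the paper leaves implicit.
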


Since the tail exchange move generalizes to w-trees, we can permute two adjacent tails without adding any new w-tree, whether they come from the same w-tree or not. We now show how to perform the last move needed to get to an ascending presentation.

\begin{prop}\label{permht}
Let $q\geq 1$, and let $T$ be a w-tree of degree $k$ in a tree presentation $(V,\T)$, such that $T$ has a tail adjacent to its head. Then $(V,\T)$ is $w_q$--concordant to a tree presentation $(V,\T')$ which only differs from $(V,\T)$ by the permutation of the head of $T$ with its tail and the addition of w-trees of degree $>k$. Moreover, the heads of the additional w-trees are positioned in a small neighborhood of the head of $T$.
\end{prop}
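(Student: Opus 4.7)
The main obstacle is that Proposition~\ref{permttht} only applies to endpoints belonging to two \emph{distinct} w-trees, while here $h$ and $t$ are both endpoints of the same w-tree $T$. The plan is therefore to use a welded concordance to move locally to a configuration where the head and the adjacent tail behave as ends of two different w-trees, apply Proposition~\ref{permttht} there, and then undo the concordance; the resulting excess w-trees will automatically have degree $\geq 2k > k$ and heads localized near $h$.

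Concretely, let $S$ be the strand of $V$ carrying the adjacent endpoints $h$ and $t$, and let $I\subset S$ be a small neighborhood of the arc from $h$ to $t$ containing no other endpoint of a w-tree of $\T$. First I would perform a birth producing a small trivial closed component $C$ disjoint from $V$, then a saddle between $C$ and $S$ at a point of $I$ strictly between $h$ and $t$. In the resulting intermediate mixed diagram, the arc between $h$ and $t$ is replaced by a small detour passing through what was $C$. After a local isotopy, the w-tree $T$ can be redrawn in this new neighborhood as two w-trees $T_1$ and $T_2$, both of degree $k$, obtained by doubling $T$ along the detour and sharing the internal shape of $T$: $T_1$ carries the former head $h$ and $T_2$ carries the tail $t$, with the remaining tails distributed so that the two trees are genuinely distinct on the intermediate diagram.

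Next, I would apply Proposition~\ref{permttht} to swap the endpoints $h$ of $T_1$ and $t$ of $T_2$, now lying on distinct strand pieces. This introduces w-trees of degree $\geq 2k > k$ whose heads are positioned next to the head of $T_1$ (or $T_2$), hence next to the former head of $T$. Finally, I would reverse the concordance by a dual saddle and a death of the auxiliary component, fusing $T_1$ and $T_2$ back into the single w-tree $T$ with $h$ and $t$ now permuted. The Euler-characteristic count (one birth, two saddles, one death) on the auxiliary component shows that the underlying cobordism is a concordance, and the corrections introduced all lie in the small neighborhood of $h$ where the local modification took place.

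The main obstacle is the intermediate reinterpretation of $T$ as the pair $T_1, T_2$ after the first saddle: one must justify that this doubling is legitimate with respect to the cyclic orders, the twists, and the expansion of $T$ into w-arrows, and verify that no w-tree of degree $\leq k$ other than the claimed swap is produced. One also has to ensure that the local concordance does not alter any other w-tree of $\T$, so that the net effect on the full tree presentation is exactly the exchange of $h$ and $t$ together with the controlled higher-degree w-trees, as stated.
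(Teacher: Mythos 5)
Your proposal correctly identifies the crux (Proposition~\ref{permttht} only swaps ends of two \emph{distinct} w-trees) and the right general tool (a birth followed by a saddle), but the step that is supposed to resolve the crux is not a legitimate operation, and you flag this yourself without repairing it. There is no move in arrow calculus that lets you ``redraw'' a single degree~$k$ w-tree $T$ as two degree~$k$ w-trees $T_1$ and $T_2$ by ``doubling along the detour'': surgery along one degree~$k$ tree and surgery along two degree~$k$ trees produce different diagrams, and your description leaves unspecified where the head of $T_2$ would even sit (a w-tree has exactly one head). Likewise the final ``fusing $T_1$ and $T_2$ back into $T$'' by a dual saddle and death has no counterpart among the established moves. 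Without a valid mechanism for producing two genuinely distinct trees, Proposition~\ref{permttht} never becomes applicable and the argument collapses.

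The paper's proof fills exactly this hole using the \emph{inverse move} for w-trees (Figure~\ref{Figinvwtree}): after the birth of a closed component, one inserts a cancelling pair of parallel copies of $T$ (differing by a twist on the last edge) whose head and adjacent tail lie on the new component, so that one now has \emph{three} w-trees --- the original $T$ untouched, plus two new distinct ones. Proposition~\ref{permttht} is applied to ends of these distinct copies, a single saddle then merges the closed component into the strand, Proposition~\ref{permttht} reorders the remaining ends, and a final inverse move cancels the original $T$ against one of the copies, leaving one copy of $T$ with head and tail permuted plus the controlled higher-degree debris. Note also that the paper needs only one birth and one saddle (Euler characteristic $0+1-1$ on the relevant component), not the birth--saddle--saddle--death sequence you propose. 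If you replace your ``doubling'' step by the cancelling-pair trick and the final cancellation, your outline becomes essentially the paper's proof.
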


\begin{proof}
If $k\geq q$, we can simply delete $T$ by $w_q$--equivalence and recreate it with the positions of its head and adjacent tail swapped on the strand. We illustrate the proof for $k<q$ on Figure~\ref{FigpermhtwqC}, where we do not represent the additional w-trees of degree $>k$ created by the moves described in Proposition~\ref{permttht}. The heads of these w-trees all end up on the portion of strand represented in the figure. \\

To go from picture (a) to picture (b), we use a birth to create an empty closed component. To go from (b) to (c), we use the inverse move to introduce two copies of $T$ parallel to each other which only differ by a twist on the last edge, but whose head and adjacent tail are positioned on the closed component. To go from (c) to (d), we use Proposition~\ref{permttht} to permute the head of the upper copy of $T$ with the tail of the lower one. This creates w-trees of degree $\geq 2k$ whose heads are positioned next to the heads of these two copies of $T$. To go from (d) to (e), we slide the tail of the upper copy of $T$ along the closed component before performing a saddle move on picture (f). Picture (g) depicts the same configuration as picture (f) after straightening the portion of strand. To go from (g) to (h), we use Proposition~\ref{permttht} to bring the tail of the upper copy of $T$ up to its head, and the head of the middle copy of $T$ next to the head of the lower copy of $T$. There might be heads and tails of higher degree w-trees in the way, but since we can delete g-trees of degree $\geq q$ by $w_q$--equivalence, we can get to the configuration depicted in picture (h) in a finite number of steps. Finally, to go from (h) to (i) we use the inverse move to cancel out the two lower copies of $T$. We are left with a copy of $T$ whose head and adjacent tail have been permuted, and a collection of additional w-trees of degree $>k$ whose heads are positioned on the portion of strand displayed in picture (i).
\end{proof}

\begin{figure}
\centering
\begin{tikzpicture}
\begin{scope}[xshift=-5.5cm]
\draw [thick,-Stealth] (0,0) -- (0,3) ;
\draw [-Stealth] (-0.8,1.3) -- (0,1.3) ;
\draw (-0.8,1.7) -- (0,1.7) ;
\draw [fill=white] (-0.8,1.5) ellipse (0.3 and 0.35) ;
\draw (-0.8,1.5) node{$T$} ;
\draw (-0.3,-0.5) node{(a)} ;
\end{scope}

\begin{scope}[xshift=-2.8cm]
\draw [thick,-Stealth] (0,0) -- (0,3) ;
\draw [-Stealth] (-0.8,1.3) -- (0,1.3) ;
\draw (-0.8,1.7) -- (0,1.7) ;
\draw [fill=white] (-0.8,1.5) ellipse (0.3 and 0.35) ;
\draw (-0.8,1.5) node{$T$} ;
\draw [thick,-Stealth] (0.5,1.5) -- (0.5,1.4) ;
\draw [thick] (1.4,1.5) ellipse (0.9 and 1.2) ;
\draw (0.5,-0.5) node{(b)} ;
\end{scope}

\begin{scope}[xshift=2cm]
\draw [thick,-Stealth] (0,0) -- (0,3) ;
\draw [-Stealth] (-0.8,1.3) -- (0,1.3) ;
\draw (-0.8,1.7) -- (0,1.7) ;
\draw [fill=white] (-0.8,1.5) ellipse (0.3 and 0.35) ;
\draw (-0.8,1.5) node{$T$} ;
\draw [thick,-Stealth] (0.5,1.5) -- (0.5,1.4) ;
\draw [thick] (1.4,1.5) ellipse (0.9 and 1.2) ;
\draw [-Stealth] (1.4,1.7) -- (2.28,1.3) ;
\draw (1.4,2.1) -- (2.18,2.1) ;
\draw [fill=white] (1.4,1.9) ellipse (0.3 and 0.35) ;
\draw (1.4,1.9) node{$T$} ;
\draw [-Stealth] (1.4,0.9) -- (2.18,0.9) ;
\draw (1.4,1.3) -- (2.28,1.7) ;
\draw [fill=white] (1.4,1.1) ellipse (0.3 and 0.35) ;
\draw (1.4,1.1) node{$T$} ;
\fill [black] (1.85,0.9) circle (0.06) ;
\draw (0.5,-0.5) node{(c)} ;
\end{scope}

\begin{scope}[xshift=6.8cm]
\draw [thick,-Stealth] (0,0) -- (0,3) ;
\draw [-Stealth] (-0.8,1.3) -- (0,1.3) ;
\draw (-0.8,1.7) -- (0,1.7) ;
\draw [fill=white] (-0.8,1.5) ellipse (0.3 and 0.35) ;
\draw (-0.8,1.5) node{$T$} ;
\draw [thick,-Stealth] (0.5,1.5) -- (0.5,1.4) ;
\draw [thick] (1.4,1.5) ellipse (0.9 and 1.2) ;
\draw [-Stealth] (1.4,1.8) -- (2.27,1.8) ;
\draw (1.4,2.2) -- (2.12,2.2) ;
\draw [fill=white] (1.4,2) ellipse (0.3 and 0.35) ;
\draw (1.4,2) node{$T$} ;
\draw [-Stealth] (1.4,0.8) -- (2.12,0.8) ;
\draw (1.4,1.2) -- (2.27,1.2) ;
\draw [fill=white] (1.4,1) ellipse (0.3 and 0.35) ;
\draw (1.4,1) node{$T$} ;
\fill [black] (1.83,0.8) circle (0.06) ;
\draw (0.5,-0.5) node{(d)} ;
\end{scope}

\begin{scope}[xshift=-5.5cm,yshift=-4.5cm]
\draw [blue,dashed,thick] (0,1.5) -- (0.5,1.5) ;
\draw [thick,-Stealth] (0,0) -- (0,3) ;
\draw [-Stealth] (-0.8,0.6) -- (0,0.6) ;
\draw (-0.8,1) -- (0,1) ;
\draw [fill=white] (-0.8,0.8) ellipse (0.3 and 0.35) ;
\draw (-0.8,0.8) node{$T$} ;
\draw [thick,-Stealth] (0.5,1.5) -- (0.5,1.4) ;
\draw [thick] (1.4,1.5) ellipse (0.9 and 1.2) ;
\draw [-Stealth] (1.3,1.8) -- (2.27,1.8) ;
\draw (1.3,2.2) .. controls +(0.7,0.3) and +(0.2,0.8) .. (0.8,2) ;
\draw (0.8,2) .. controls +(-0.1,-0.35) and +(0.3,0.1) .. (0.55,1.2) ;
\draw [fill=white] (1.3,2) ellipse (0.3 and 0.35) ;
\draw (1.3,2) node{$T$} ;
\draw [-Stealth] (1.4,0.8) -- (2.12,0.8) ;
\draw (1.4,1.2) -- (2.27,1.2) ;
\draw [fill=white] (1.4,1) ellipse (0.3 and 0.35) ;
\draw (1.4,1) node{$T$} ;
\fill [black] (1.83,0.8) circle (0.06) ;
\draw (0.5,-0.5) node{(e)} ;
\end{scope}

\begin{scope}[xshift=-0.75cm,yshift=-4.5cm]
\draw [thick] (0,0) -- (0,1) ;
\draw [thick] (0,1) .. controls +(0,0.5) and +(-0.1,0.3) .. (0.3,1) ;
\draw [thick] (0.3,1) .. controls +(0.4,-1.2) and +(0,-1.5) .. (2,1.5) ;
\draw [thick] (2,1.5) .. controls +(0,1.5) and +(0.4,1.2) .. (0.3,2) ;
\draw [thick] (0.3,2) .. controls +(-0.1,-0.3) and +(0,-0.5) .. (0,2) ;
\draw [thick,-Stealth] (0,2) -- (0,3) ;
\draw [-Stealth] (-0.8,0.6) -- (0,0.6) ;
\draw (-0.8,1) -- (0,1) ;
\draw [fill=white] (-0.8,0.8) ellipse (0.3 and 0.35) ;
\draw (-0.8,0.8) node{$T$} ;
\draw [-Stealth] (1,1.8) -- (1.97,1.8) ;
\draw (1,2.2) .. controls +(0.7,0.3) and +(0.2,0.8) .. (0.5,2) ;
\draw (0.5,2) .. controls +(-0.1,-0.35) and +(0.3,0.1) .. (0.27,1.1) ;
\draw [fill=white] (1,2) ellipse (0.3 and 0.35) ;
\draw (1,2) node{$T$} ;
\draw [-Stealth] (1.2,0.8) -- (1.88,0.8) ;
\draw (1.2,1.2) -- (1.98,1.2) ;
\draw [fill=white] (1.2,1) ellipse (0.3 and 0.35) ;
\draw (1.2,1) node{$T$} ;
\fill [black] (1.61,0.8) circle (0.06) ;
\draw (0.5,-0.5) node{(f)} ;
\end{scope}

\begin{scope}[xshift=4cm,yshift=-4.5cm]
\draw [thick,-Stealth] (0,0) -- (0,3) ;
\draw [-Stealth] (-0.8,0.4) -- (0,0.4) ;
\draw (-0.8,0.8) -- (0,0.8) ;
\draw [fill=white] (-0.8,0.6) ellipse (0.3 and 0.35) ;
\draw (-0.8,0.6) node{$T$} ;
\draw [-Stealth] (-1,1.3) -- (0,1.3) ;
\draw (-1,1.7) -- (0,1.7) ;
\draw [fill=white] (-1,1.5) ellipse (0.3 and 0.35) ;
\draw (-1,1.5) node{$T$} ;
\fill [black] (-0.27,1.3) circle (0.06) ;
\draw [-Stealth] (-1,2.2) -- (0,2.2) ;
\draw (-1,2.7) .. controls +(1,0) and +(-1,0) .. (0,1.05) ;
\draw [fill=white] (-1,2.4) ellipse (0.3 and 0.35) ;
\draw (-1,2.4) node{$T$} ;
\draw (-0.3,-0.5) node{(g)} ;
\end{scope}

\begin{scope}[xshift=6.75cm,yshift=-4.5cm]
\draw [thick,-Stealth] (0,0) -- (0,3) ;
\draw [-Stealth] (-0.8,0.4) -- (0,0.4) ;
\draw (-0.8,0.8) -- (0,1.3) ;
\draw [fill=white] (-0.8,0.6) ellipse (0.3 and 0.35) ;
\draw (-0.8,0.6) node{$T$} ;
\draw [-Stealth] (-0.8,1.3) -- (0,0.8) ;
\draw (-0.8,1.7) -- (0,1.7) ;
\draw [fill=white] (-0.8,1.5) ellipse (0.3 and 0.35) ;
\draw (-0.8,1.5) node{$T$} ;
\fill [black] (-0.24,0.95) circle (0.06) ;
\draw [-Stealth] (-0.8,2.2) .. controls +(0.6,0) and +(-0.6,0) .. (0,2.6) ;
\draw (-0.8,2.6) .. controls +(0.6,0) and +(-0.6,0) .. (0,2.2) ;
\draw [fill=white] (-1,2.4) ellipse (0.3 and 0.35) ;
\draw (-1,2.4) node{$T$} ;
\draw (-0.3,-0.5) node{(h)} ;
\end{scope}

\begin{scope}[xshift=9.5cm,yshift=-4.5cm]
\draw [thick,-Stealth] (0,0) -- (0,3) ;
\draw [-Stealth] (-0.8,1.3) .. controls +(0.6,0) and +(-0.6,0) .. (0,1.7) ;
\draw (-0.8,1.7) .. controls +(0.6,0) and +(-0.6,0) .. (0,1.3) ;
\draw [fill=white] (-1,1.5) ellipse (0.3 and 0.35) ;
\draw (-1,1.5) node{$T$} ;
\draw (-0.3,-0.5) node{(i)} ;
\end{scope}
\end{tikzpicture}
\caption{Permuting the head and a tail of the same w-tree.}
{\small The additional w-trees of higher degree are not represented.}\label{FigpermhtwqC}
\end{figure}

\begin{cor}\label{equivascdiag}
Let $(V,\T)$ be a tree presentation of a welded string link. For any $q\geq 1$, $(V,\T)$ is $w_q$--concordant to an ascending presentation.
\end{cor}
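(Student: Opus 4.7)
The plan is to work modulo $w_q$-equivalence (which allows discarding all w-trees of degree $\geq q$) and modulo welded concordance, then repeatedly apply the local moves established in Propositions~\ref{permttht} and~\ref{permht}, together with the free tail-exchange move, until the presentation becomes ascending. ``Ascending'' is equivalent to saying that no \emph{inversion} occurs on any strand, where an inversion is a pair consisting of a tail of some w-tree lying above a head of some w-tree on the same strand.

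I would argue by downward induction on the degree $d$ from $d = q-1$ to $d = 1$. The induction claim at stage $d$ is that $(V,\T)$ is $w_q$-concordant to a tree presentation in which no inversion involves a w-tree of degree $\geq d$. For the base case $d = q-1$, any head-tail swap required to eliminate a degree-$(q-1)$ inversion is handled by Proposition~\ref{permttht} (for distinct w-trees) or Proposition~\ref{permht} (for the same w-tree); both moves produce only additional w-trees of degree $\geq 2(q-1) \geq q$ (assuming $q \geq 2$; the case $q = 1$ is trivial), which vanish under $w_q$-equivalence. Hence stage $q-1$ is clean and terminates after finitely many swaps.

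For the inductive step, I fix $d < q-1$ and assume all degree-$>d$ inversions have been eliminated. I then eliminate degree-$d$ inversions one at a time by adjacent swaps along the relevant strand: a tail-tail swap is free, while a head-tail swap creates additional w-trees of degree strictly greater than $d$ (equal to $d+d'$ by Proposition~\ref{permttht} for distinct trees of degrees $d$ and $d' \geq 1$, strictly greater than $d$ by Proposition~\ref{permht} for the same tree). The newly introduced w-trees may themselves produce inversions, but the statements of both propositions guarantee that their heads lie in a small neighborhood of a pre-existing head, so no new inversion at degree $\leq d$ is produced. I then rearrange these new higher-degree trees by re-applying the already-completed higher stages of the induction; since each such re-application strictly raises the minimum degree of any remaining inversion, this recursive cleanup terminates after at most $q - 1 - d$ nested iterations.

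The main obstacle is exactly this bookkeeping: verifying that the recursive rearrangement of newly produced higher-degree w-trees terminates and that it never reintroduces degree-$\leq d$ inversions. The crucial structural fact that makes both work is the positioning clause in Propositions~\ref{permttht} and~\ref{permht} -- all new heads appear next to pre-existing heads -- so no tail ever gets dragged across a head at a lower degree. It is also worth noting that Proposition~\ref{permht} is the only ingredient requiring welded concordance proper rather than mere $w_q$-equivalence, which explains why the conclusion of the corollary is stated in terms of $w_q$-concordance rather than $w_q$-equivalence.
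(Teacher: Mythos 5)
Your overall strategy is sound and rests on exactly the same two pillars as the paper's proof: every swap raises degree (so the process is killed off by $w_q$--equivalence after finitely many generations), and the positioning clause of Propositions~\ref{permttht} and~\ref{permht} keeps the new heads under control. Your downward induction on degree with recursive cleanup is essentially the paper's double induction in a different dress: the paper instead orders the configurations by a lexicographic tuple counting, degree by degree, the tails that are still out of place, with an inner induction on the number of heads separating a given tail from its target position. The one genuine simplification you miss is the paper's normalization step: before any swapping, it slides all tree ends upward along each strand, preserving their order, so that every head lies above the midway point; since new heads always appear next to old ones, this invariant persists for free, and the entire argument then reduces to pushing \emph{tails} below the midway point --- heads never have to be moved past anything. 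Your inversion-based invariant is symmetric in heads and tails, so you must also drag heads of degree-$\geq d$ trees upward past low-degree tails, which is workable but doubles the bookkeeping.

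One claim in your inductive step is false as literally stated and needs repair: ``a head-tail swap creates additional w-trees of degree strictly greater than $d$.'' This holds only when one of the two ends being swapped belongs to a tree of degree $\geq d$. If you eliminate the inversion (tail of a degree-$1$ tree $T_1$ above head of a degree-$d$ tree $T_2$) by sliding the tail of $T_1$ downward, it may first have to cross the head of some other degree-$1$ tree sitting between them, and that swap produces trees of degree only $2$, possibly $\leq d$ --- contradicting your claim that no new degree-$\leq d$ material appears. The fix is to always move the end belonging to the degree-$\geq d$ tree (here: push the head of $T_2$ upward past the offending tail, and past anything in between), so that every swap performed at stage $d$ involves $T_2$ and hence produces only trees of degree $>d$. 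With that convention imposed, and with your observation that moving a tail downward or a head upward can only destroy, never create, inversions for the \emph{other} trees it crosses, the induction closes. The paper's midway-point device sidesteps this issue entirely, which is the main thing its formulation buys.
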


\begin{proof}
First, note that by $w_q$--equivalence all w-trees of degree $\geq q$ can be deleted, and this will be done implicitly in all the steps of this proof. Using the moves $vR1$, $vR2$ and $vR3$ on the underlying virtual diagram $V$, we can assume that it is made up of vertical strands without any virtual crossing. We start by sliding the extremities of the w-trees of $\T$ upward (without permuting any) in order to get to a configuration where all the heads are above the midway point on each strand. The goal is then to get to a configuration where all the tails of w-trees are below the midway point, since such a presentation would be ascending. \\

Let $t=(t_1,\ldots,t_{q-1})\in\NNo^{q-1}$ be the $(q-1)$--tuple indicating the number $t_i$ of tails belonging to w-trees of degree $i$ which are above the midway point. We will proceed by induction on $c$, the set $\NNo^{q-1}$ being given the lexicographic order (which is a well-order, so the principle of induction applies). By Propositions~\ref{permttht} and~\ref{permht}, we can permute two adjacent ends of w-trees which are above the midway point by adding w-trees of higher degree, whose heads are positioned above the midway point as they appear next to pre-existing heads, which are all above it. If $t=(0,\ldots,0)$, we are done. If $t>(0,\ldots,0)$, let $T$ be a w-tree with a tail $\tau$ above the midway point, and whose degree is minimal among such w-trees. Let $h=(h_1,\ldots,h_{q-1})\in\NNo^{q-1}$ the $(q-1)$--tuple indicating the number $h_i$ of heads belonging to w-trees of degree $i$  which are positionned between the tail $\tau$ and the midway point. We will bring the tail $\tau$ under the midway point by induction on $h$. \\

If $h=(0,\ldots,0)$, there are no heads between $\tau$ and the midway point, so we can slide $\tau$ under it. If $h>(0,\ldots,0)$, we slide $\tau$ down to the first head below it. Let $j$ be the degree of the w-tree of this head. We can permute $\tau$ with this head by adding w-trees of degree greater than $\deg(T)$ and $j$, whose heads are above the midway point. We have replaced $h=(h_1,\ldots,h_j,h_{j+1},\ldots,h_{q-1})$ by $h'=(h_1,\ldots,h_j-1,h_{j+1}',\ldots,h_{q-1}')<h$. By the induction hypothesis, we can then bring $\tau$ below the midway point. \\

By doing this, we created new w-trees of degree $>i:=\deg(T)$ and whose heads are above the midway point, so we replaced $t=(0,\ldots,0,t_i,t_{i+1},\ldots,t_{q-1})$ by $t'=(0,\ldots,0,t_i-1,t_{i+1}',\ldots,t_{q-1}')<t$. This concludes the proof by induction on $t$.
\end{proof}

\begin{lem}\label{classascdiag}
Two ascending presentations which have equal Milnor invariants of length $\leq q$ are $w_q$--equivalent.
\end{lem}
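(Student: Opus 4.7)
The plan is to exploit the fact that, for an ascending presentation, the longitudes can be read directly off the diagram as products of iterated commutators of meridians, living naturally in the free group $F_n$ on the meridians. Concretely, given an ascending presentation $(V,\T)$ of a welded string link, I would first observe that since every tail lies below every head on each strand, each tail sits on the bottom arc of its strand and carries the corresponding meridian $m_i$. Propagating through the trivalent vertices of a w-tree $T$, with brackets and signs determined by the cyclic order at each vertex and the twists on the edges, produces at the head of $T$ an iterated commutator $c_T\in F_n$ of these meridians, of weight $\deg(T)$. The longitude of $D_j$ is then the product $\ell_j=\prod_T c_T$ over the w-trees $T$ with head on $D_j$, in the order their heads appear along $D_j$; no meridian correction term is needed since each $c_T$ has weight at least $2$.

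By Proposition~\ref{injMagexp}, the Magnus expansion $E_q:N_qF_n\to U_1(\Sq)$ is injective, so the hypothesis translates into the statement that the longitudes of the two presentations agree in $N_qF_n$, i.e.\ that their ratios lie in $\G_qF_n$. By Proposition~\ref{gencom}, any such ratio is a product of iterated commutators of weight $\geq q$ on the meridians, and each such commutator can be realized by adjoining to one of the two presentations a w-tree of degree $\geq q$ whose shape encodes it, with its head placed above all pre-existing heads on the relevant component and its tails below all pre-existing tails on the corresponding strands, so that the presentation remains ascending. Since surgeries along w-trees of degree $\geq q$ are $w_q$--moves, after these modifications I may assume the two ascending presentations have longitudes coinciding as explicit ordered products of commutators in $F_n$.

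It then remains to transform one presentation into the other using moves on w-trees. I would align the two presentations factor by factor: tail exchange allows free reordering of tails on each strand, antisymmetry at trivalent vertices (Figure~\ref{Figantisym}) implements the identity $[a,b]^{-1}=[b,a]$ and its iterates at the level of w-trees, and Proposition~\ref{permttht} lets me reorder adjacent heads at the cost of w-trees of strictly higher degree which are handled inductively or removed by $w_q$--equivalence once their degree reaches $q$. The main obstacle is this final step, since distinct w-tree shapes can yield the same element of $F_n$ via Jacobi-type identities; the cleanest way to manage this is to introduce a fixed normal form for ascending presentations---for instance, one w-tree per basic commutator in a Hall basis of $N_qF_n$, arranged in a canonical order along each component---and show that every ascending presentation is $w_q$--equivalent to its normal form, which by construction depends only on the longitudes in $N_qF_n$, hence only on the Milnor invariants of length $\leq q$.
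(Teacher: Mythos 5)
Your first two steps track the paper's argument: reading the longitudes of an ascending presentation as explicit words on the meridians, using the injectivity of $E_q$ (Proposition~\ref{injMagexp}) to deduce $\ell_i\equiv\ell_i'\bmod\G_qF_n$, and invoking Proposition~\ref{gencom} to realize the discrepancy by free reductions and by insertion/deletion of commutators of weight $\geq q$, the latter implemented by w-trees of degree $\geq q$. (One small slip along the way: a tree presentation may contain w-trees of degree $1$, i.e.\ plain w-arrows, whose contribution $m_i^{\pm1}$ to $\ell_i$ on its own component does require the normalization $a_i^{\ast}(\ell_i)=0$; the paper handles this with isolated arrow moves.)

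The genuine gap is in your final step. You try to match the two presentations \emph{as tree presentations}, factor by factor, and you correctly identify the obstruction: distinct w-tree shapes can represent the same element of $F_n$ through Jacobi-type identities, so a bijection between w-trees cannot be read off from equality of the longitudes. Your proposed remedy --- a normal form built from a Hall basis of $N_qF_n$, together with the claim that every ascending presentation is $w_q$--equivalent to its normal form --- is exactly the hard part, and it is left unproven; establishing it would require showing that IHX/Jacobi-type relations among w-trees hold up to $w_q$--equivalence inside ascending presentations, which is a substantial piece of work not present in your sketch. The paper sidesteps all of this with one move you are missing: \emph{expand every w-tree into its collection of w-arrows}. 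For an ascending \emph{arrow} presentation the conjugating elements are literally the meridians, and the presentation is in one-to-one correspondence with the tuple of words $(\ell_1,\ldots,\ell_n)$ on the $m_j^{\pm1}$, one appropriately twisted w-arrow per letter. Once the words coincide letter by letter (after the free reductions, realized by inverse moves, and the $w_q$--moves from Proposition~\ref{gencom}), the two arrow presentations are \emph{identical} --- no normal form, no Hall basis, and no head permutations via Proposition~\ref{permttht} are needed at this stage.
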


\begin{proof}
Let $(V,\T)$ be an ascending presentation. After expanding the w-trees, we can assume that we have an arrow presentation $(V,\A)$. The conjugating elements associated to the w-arrows are then meridians $m_j^{\pm}$, i.e. the generators associated to the bottom arcs of the virtual diagram. As a result, the longitudes can be directly read as words on the meridians, and there is a one-to-one correspondance between words $\ell_1,\ldots,\ell_n$ in the $m_j^{\pm 1}$'s and ascending arrow presentations, each $m_j^{\pm 1}$ in $\ell_i$ corresponding to a appropriately twisted w-arrow from the $j^{th}$ component to the $i^{th}$ one. Using isolated arrow moves at the bottom of $i^{th}$ component, we can replace $\ell_i$ by $m_i^k\ell_i$, so we can assume that $a_i^{\ast}(\ell_i)=0$ for $1\leq i\leq n$, so that the $\ell_i$'s are the longitudes of the diagram. We will show that an ascending tree presentation $(V,\T')$ with Milnor invariants of length $\leq q$ equal to those of $(V,\A)$ can be modified using $w_q$--equivalence into an ascending arrow presentation $(V,\A'')$ which has the same $\ell_i$'s as $(V,\A)$, and thus represent equivalent virtual diagrams. \\

Let us write the longitudes $\ell_i'$ of $(V,\T')$ as words on the generators $m_j$'s instead of the $m_j'$'s. By the equality of the Milnor invariants of legnth $\leq q$, we have $E(\ell_i)\equiv E(\ell_i')\!\mod\X^q$. By Proposition~\ref{injMagexp}, we have $\ell_i\equiv\ell_i'\!\mod\G_qF_n$. By Proposition~\ref{gencom}, $\ell_i$ and $\ell_i'$ differ by a finite sequence of insertion/deletion of $m_jm_j^{-1}$ and $m_j^{-1}m_j$, which can be performed on arrow presentations by inverse moves, and insertion/deletion of commutators of weight $\geq q$ on the $m_j^{\pm 1}$'s, which can be performed on tree presentations by insertion/deletion of suitable w-trees of degree $\geq q$ by $w_q$--equivalence. As a result, $(V,\T')$ is $w_q$--equivalent to an ascending tree presentation $(V,\T'')$ such that $\ell_i''=\ell_i$ as words on the $m_j^{\pm 1}$'s. After expanding the w-trees to obtain an arrow presentation $(V,\A'')$, we thus have $(V,\A'')=(V,\A)$, so $(V,\A'')$ and $(V,\T')$ are $w_q$--equivalent.
\end{proof}

Using Propositions~\ref{invconc} and~\ref{invwqconc}, Corollary~\ref{equivascdiag} and Lemma~\ref{classascdiag}, we obtain the main result of the section:

\begin{theo}\label{classwldstrlk}
The Milnor invariants of length $\leq q$ are complete invariants of $w_q$--concordance on welded string links.
\end{theo}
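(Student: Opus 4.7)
The statement combines all four preceding results, so the plan is simply to assemble them into a two-way implication.

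First I would address the easy direction. Suppose two welded string links $L$ and $L'$ are $w_q$--concordant. By definition, they are connected by a finite sequence of welded concordances and $w_q$--equivalences. Proposition~\ref{invconc} ensures that Milnor invariants are preserved under welded concordance, and Proposition~\ref{invwqconc} ensures that Milnor invariants of length $\leq q$ are preserved under $w_q$--equivalence. Composing these along the sequence, $L$ and $L'$ have equal Milnor invariants of length $\leq q$.

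For the converse, suppose $L$ and $L'$ have equal Milnor invariants of length $\leq q$. Pick tree presentations $(V,\T)$ and $(V',\T')$ of $L$ and $L'$ respectively. By Corollary~\ref{equivascdiag}, each tree presentation is $w_q$--concordant to an ascending presentation, say $(V,\T_a)$ and $(V',\T_a')$. Applying the first direction (already proven) to these $w_q$--concordances, the ascending presentations $(V,\T_a)$ and $(V',\T_a')$ still have the same Milnor invariants of length $\leq q$ as $L$ and $L'$, hence as each other. Lemma~\ref{classascdiag} then gives that $(V,\T_a)$ and $(V',\T_a')$ are $w_q$--equivalent, in particular $w_q$--concordant.

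Finally, concatenating the three $w_q$--concordances $L \leftrightarrow (V,\T_a) \leftrightarrow (V',\T_a') \leftrightarrow L'$ yields a $w_q$--concordance from $L$ to $L'$, which completes the proof. The only subtle point to double-check is that the invariance statements apply to diagrams with arbitrary ascending tree presentations and not merely to the originals; this is immediate since Milnor invariants are intrinsic to the welded string link represented, independent of the chosen presentation. No additional technical obstacle is anticipated, as all the heavy lifting — the passage to ascending presentations via the delicate head/tail permutation of Proposition~\ref{permht} and the Magnus expansion argument of Lemma~\ref{classascdiag} — has already been carried out.
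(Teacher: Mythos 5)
Your proposal is correct and follows exactly the route the paper takes: the paper's proof of Theorem~\ref{classwldstrlk} consists precisely of citing Propositions~\ref{invconc} and~\ref{invwqconc} for the easy direction and Corollary~\ref{equivascdiag} together with Lemma~\ref{classascdiag} for the converse, which is the assembly you carry out. Your explicit concatenation of the three $w_q$--concordances and the remark on presentation-independence of the Milnor invariants just spell out what the paper leaves implicit.
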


As a result, we obtain the following diagrammatical characterization on classical string links:

\begin{cor}\label{classclsstrlk}
Two classical string links have equal Milnor invariants of length $\leq q$ if and only if they are $w_q$--concordant as welded string links.
\end{cor}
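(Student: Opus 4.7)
The plan is to deduce this from Theorem~\ref{classwldstrlk} essentially immediately, with the only subtlety being to reconcile the classical and welded notions of Milnor invariants for a classical string link.

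First I would observe that by Theorem~1.B of~\cite{GouPolVir} (extended to welded string links, as noted in the introduction), the natural map from classical string links to welded string links is injective, so it makes sense to regard a classical string link $L$ as a welded string link $\wt{L}$. The peripheral system of $\wt{L}$ defined in Section~\ref{Sec12} via the Wirtinger-like presentation $G_D$ of a classical diagram $D$ coincides with the classical Wirtinger presentation of $\pi_1(\RR^3\setminus L)$ together with the usual meridians and longitudes. Consequently, the Milnor invariants $\mu_I(D)$ extracted from the $q$-nilpotent peripheral system of $\wt{L}$ via the Magnus expansion $E_q$ agree with the usual Milnor invariants of the classical string link $L$. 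Thus two classical string links $L$ and $L'$ have equal Milnor invariants of length $\leq q$ if and only if $\wt{L}$ and $\wt{L'}$ do.

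The forward implication then follows directly from Theorem~\ref{classwldstrlk}: equality of Milnor invariants of length $\leq q$ for $\wt{L}$ and $\wt{L'}$ implies that they are $w_q$-concordant as welded string links. The reverse implication follows from Propositions~\ref{invwqconc} and~\ref{invconc}, which together imply that $w_q$-concordance preserves Milnor invariants of length $\leq q$ on welded string links; applied to $\wt{L}$ and $\wt{L'}$ this yields the equality of the classical Milnor invariants of $L$ and $L'$.

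There is no real obstacle here: once the identification of Milnor invariants under the classical-to-welded injection is made, the statement is a formal consequence of Theorem~\ref{classwldstrlk}. The only place to be slightly careful is the verification that the welded peripheral system of a classical diagram (defined through $G_D$ and the arc-based choice of meridians/longitudes in Section~\ref{Sec12}) really does recover the classical peripheral system, but this is built into the Wirtinger-style definition and does not require a separate argument.
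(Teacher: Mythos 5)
Your proposal is correct and matches the paper's (implicit) argument: the paper derives this corollary immediately from Theorem~\ref{classwldstrlk} via the observation that the Wirtinger-style peripheral system of a classical diagram, viewed as a welded diagram, recovers the classical one, so the Milnor invariants agree under the classical-to-welded map. Your extra remark on injectivity is harmless but not actually needed for the stated equivalence.
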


We also have the following corollary on the finite type concordance invariants of welded string links:

\begin{cor}
For two welded string links $L$ and $L'$, the following statements are equivalent:
\begin{enumerate}
\item $L$ and $L'$ are $w_q$--concordant,
\item $L$ and $L'$ have the same Milnor invariants of length $\leq q$,
\item $L$ and $L'$ the same finite type concordance invariants of degree $<q$.
\end{enumerate}
\end{cor}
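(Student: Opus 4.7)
The plan is to establish the cyclic chain of implications (1) $\Rightarrow$ (3) $\Rightarrow$ (2) $\Rightarrow$ (1), using Theorem~\ref{classwldstrlk} for the last step and the external results of \cite{MeiYas}, \cite{Cas21} for the passage between finite type concordance invariants and the two other notions.

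For (2) $\Rightarrow$ (1), I simply invoke Theorem~\ref{classwldstrlk}, which is precisely the statement that Milnor invariants of length $\leq q$ are complete $w_q$--concordance invariants.

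For (3) $\Rightarrow$ (2), I would argue that each Milnor invariant $\mu_I$ of length $|I|\leq q$ is itself a finite type concordance invariant of degree $<q$. Its concordance invariance is Proposition~\ref{invconc}, and the fact that $\mu_I$ is a finite type invariant of degree $|I|-1$ is classical (in the welded setting it can be taken from the Milnor-invariant discussion in \cite{MeiYas}). Hence agreement of all finite type concordance invariants of degree $<q$ forces agreement of $\mu_I$ for all $|I|\leq q$.

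For (1) $\Rightarrow$ (3), I would decompose a $w_q$--concordance into its two constituent moves. Concordance preserves any concordance invariant by definition, so in particular every finite type concordance invariant of degree $<q$ is unchanged. For the $w_q$--equivalence part, the key input is the result of \cite{MeiYas} (strengthened in \cite{Cas21}) stating that surgery along a w-tree of degree $\geq q$ does not alter finite type invariants of degree $<q$; consequently, a finite type concordance invariant of degree $<q$ is a $w_q$--equivalence invariant. Combining the two, any finite type concordance invariant of degree $<q$ is a $w_q$--concordance invariant, proving (1) $\Rightarrow$ (3).

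The main obstacle is not really in the proof itself, which is a short assembly once the ingredients are laid out, but in quoting the correct external statement giving the $w_q$--invariance of finite type invariants of degree $<q$: this is the substantive content borrowed from \cite{MeiYas}, and it is the step where one must be careful to distinguish \emph{invariance} under $w_q$--equivalence (which is what is needed here and is established for all $q$) from the converse classification statement (known only for $q\leq 3$ and conjectural in general, and not used in this corollary).
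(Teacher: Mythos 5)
Your proposal is correct and follows essentially the same route as the paper: the implication $1\Rightarrow 3$ via the $w_q$--invariance of finite type invariants of degree $<q$ from~\cite{MeiYas} (Proposition 7.5 there) combined with concordance invariance, the implication $3\Rightarrow 2$ via the fact that Milnor invariants of length $\leq q$ are themselves finite type concordance invariants of degree $<q$, and $2\Rightarrow 1$ from Theorem~\ref{classwldstrlk}. The only discrepancy is a citation detail: the paper attributes the finite-type nature of the welded Milnor invariants (i.e.\ the substance of $3\Rightarrow 2$) to the appendix of~\cite{Cas21} rather than to~\cite{MeiYas}, but this does not affect the structure of the argument.
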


\begin{proof}
We know that $1\Rightarrow 3$ by Proposition 7.5 of~\cite{MeiYas}, and $3\Rightarrow 2$ was proved in the appendix of~\cite{Cas21} using arguments similar to those of D. Bar-Natan in the classical case. Finally, Theorem~\ref{classwldstrlk} above gives $2\Leftrightarrow 1$.
\end{proof}

This shows that on welded string links, the Milnor invariants are universal among $\ZZ$-valued finite type concordance invariants. This is to be compared to the result of N. Habegger and G. Masbaum (Corollary 6.4 in~\cite{TKIMI}) giving the universality of the Milnor invariants among the rational finite type concordance invariants on classical string links. \\

Since the Milnor invariants of any length are welded concordance invariants, and the restriction to invariants of length $\leq q$ for the classification of $w_q$--concordance comes from the addition of $w_q$--equivalence, a natural question is to ask whether the Milnor invariants are complete invariants of welded concordance on welded string links, or if other invariants are required. This is still an open question.

\section{Classification of welded links}\label{Sec3}

The goal of this section is to prove that welded links are classified by their $q$--nilpotent peripheral systems up to $w_q$--concordance. We will use the fact that any tree presentation of a welded link is $w_q$--equivalent to a sorted presentation, and that two sorted presentations with equivalent $q$--nilpotent peripheral systems (see Definition~\ref{defequivqnps}) are $w_q$--concordant. We will then use this classification to characterize welded links, and in turn classical links, with vanishing Milnor invariants of length $\leq q$. \\

Since this section is dedicated to welded links, we refer to virtual diagrams of $n$--component welded links as simply virtual diagrams.

\subsection{The $q$--nilpotent peripheral system of a welded link}\label{Sec31}

We defined the peripheral system of a virtual diagram $D$ at the end of Section~\ref{Sec121} as the group $G_D$ together with meridians $m_i$ and longitudes $\ell_i$. Similarly to welded string links, we consider the \emph{$q$--nilpotent peripheral system of $D$} $(N_qG_D,\ov{m},\ov{\ell})$. In order to remove the indeterminacy coming from the choice of the meridians, we introduce the following equivalence relation on $q$--nilpotent peripheral systems:

\begin{de}\label{defequivqnps}
A $q$--nilpotent peripheral system $(N_qG_D,\ov{m},\ov{\ell})$ of a virtual diagram $D$ is said to be \emph{equivalent} to a $q$--nilpotent peripheral system $(N_qG_{D'},\ov{m}',\ov{\ell}')$ of a virtual diagram $D'$ if there exists an isomorphism $\varphi:N_qG_D\to N_qG_{D'}$ and elements $g_1,\ldots,g_n\in N_qG_D$ such that $\varphi\big(\ov{m}_i^{g_i}\big)=\ov{m}_i'$ and $\varphi\big(\ov{\ell}_i^{g_i}\big)=\ov{\ell}_i'$ for $1\leq i\leq n$.
\end{de}

\begin{prop}\label{equivqnps}
Two $q$--nilpotent peripheral systems of a virtual diagram $D$ are equivalent.
\end{prop}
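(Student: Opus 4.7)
The plan is to show that moving the basepoint around a single component $D_i$ conjugates both the meridian and the associated longitude by the \emph{same} element of $G_D$, so the indeterminacy of the choice collapses under the equivalence relation of Definition~\ref{defequivqnps}, with the connecting isomorphism $\varphi$ being the identity on $N_q G_D$.

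More concretely, I would fix a component $D_i$ and two basepoints $p_i, p_i'$, lying on arcs that give meridians $m_i$ and $m_i'$ respectively. Let $\alpha$ be the sub-arc of $D_i$ from $p_i$ to $p_i'$ (following the orientation) and $\beta$ the complementary sub-arc, so that, as words coming from Definition~\ref{subintword}, $w_{D_i^{p_i}} = w_\alpha w_\beta$ and $w_{D_i^{p_i'}} = w_\beta w_\alpha$. The first key computation is the cyclic identity
\[
w_{D_i^{p_i'}} \;=\; w_\beta w_\alpha \;=\; w_\alpha^{-1}\,(w_\alpha w_\beta)\,w_\alpha \;=\; w_{D_i^{p_i}}^{\,w_\alpha},
\]
in $G_D$, so setting $g_i := w_\alpha \in G_D$ one has $w_{D_i^{p_i'}} = w_{D_i^{p_i}}^{g_i}$. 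Applying $a_i^{\ast}$ (which is a homomorphism to $\ZZ$, hence trivial on conjugation) shows moreover that the integers $k = a_i^{\ast}(w_{D_i^{p_i}})$ and $k' = a_i^{\ast}(w_{D_i^{p_i'}})$ coincide.

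The second key point is that the very same element $g_i = w_\alpha$ conjugates $m_i$ into $m_i'$. This is an iterated use of the Wirtinger relation $c = b^{a^{\eps}}$: each time one traverses an undercrossing of $\alpha$ the generator of the current arc is conjugated by the generator of the overstrand (with the appropriate sign built into the convention defining $w_\alpha$), so after travelling all of $\alpha$ one gets $m_i' = m_i^{\,w_\alpha} = m_i^{g_i}$. Combining the two identities then gives
\[
\ell_i' \;=\; (m_i')^{-k}\, w_{D_i^{p_i'}} \;=\; (m_i^{\,g_i})^{-k}\, w_{D_i^{p_i}}^{\,g_i} \;=\; \bigl(m_i^{-k}\, w_{D_i^{p_i}}\bigr)^{g_i} \;=\; \ell_i^{\,g_i}.
\]
Doing this independently for each component produces elements $g_1,\dots,g_n \in G_D$ with $m_i' = m_i^{g_i}$ and $\ell_i' = \ell_i^{g_i}$ in $G_D$; projecting to the quotient $N_qG_D$ and taking $\varphi = \mathrm{id}_{N_qG_D}$ yields the equivalence required by Definition~\ref{defequivqnps}.

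The only subtlety worth a sentence is a sign/convention check: one must verify that the word $w_\alpha$ used in the cyclic computation of $w_{D_i^{p_i'}}$ and the word produced by iterating the Wirtinger relation along $\alpha$ to conjugate $m_i$ into $m_i'$ are literally the same element of $G_D$, including signs at each crossing. This is a direct comparison of the two conventions fixed in Section~\ref{Sec121} and Definition~\ref{subintword}, and does not present a genuine obstacle; once it is in place, the proposition follows.
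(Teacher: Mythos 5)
Your proposal is correct and follows essentially the same route as the paper: the conjugating element $g_i$ is the word $w_{J_i}$ read along the sub-arc from $p_i$ to $p_i'$, the cyclic identity $w_{D_i^{p_i'}}=(w_{D_i^{p_i}})^{g_i}$ together with the Wirtinger relations gives $\ov{m}_i'=\ov{m}_i^{g_i}$ and $\ov{\ell}_i'=\ov{\ell}_i^{g_i}$, and the invariance of $a_i^{\ast}$ under conjugation shows the correcting exponents agree, so $\varphi=\mathrm{id}_{N_qG_D}$ realizes the equivalence.
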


\begin{proof}
Let $(N_qG_D,\ov{m},\ov{\ell})$ and $(N_qG_D,\ov{m}',\ov{\ell}')$ be two $q$--nilpotent peripheral systems of a virtual diagram $D$. Let $p_i$ (resp. $p_i'$) denote the point on $D_i$ associated to $m_i$ (resp. $m_i'$), let $J_i$ denote the subinterval $]p_i,p_i'[$ of $D_i$, and let $g_i:=w_{J_i}\in N_qG_D$. Due to the Wirtinger relations associated to the undercrossings on $J_i$, we have $\ov{m}_i'=\ov{m}_i^{g_i}$.

\begin{center}
\begin{tikzpicture}
\draw (-2.4,-0.03) node{$D_i$} ;
\draw [thick,dashed] (-2,0) -- (-1.5,0) ;
\draw [thick] (-1.5,0) -- (-1.25,0) ;
\draw [thick] (1.25,0) -- (1.5,0) ;
\draw [thick,dashed] (1.5,0) -- (2,0) ;
\draw [thick,red] (-1.25,0) -- (1.25,0) ;
\draw [thick,red,-stealth] (-0.1,0) -- (0.1,0) ;
\draw [thick] (-1.25,-0.1) -- (-1.25,0.1) ;
\draw [thick] (1.25,-0.1) -- (1.25,0.1) ;
\draw (-1.25,0.35) node{$p_i$} ;
\draw (-1.25,-0.4) node{$\ov{m}_i$} ;
\draw (1.25,0.4) node{$p_i'$} ;
\draw (1.7,-0.4) node{$\ov{m}_i'=\ov{m}_i^{g_i}$} ;
\draw [red] (0,-0.45) node{$J_i$} ;
\fill [white] (-0.65,0) circle (0.1) ;
\draw [thick] (-0.65,0.6) -- (-0.65,-0.6) ;
\draw (0,0.4) node{$\dots$} ;
\fill [white] (0.65,0) circle (0.1) ;
\draw [thick] (0.65,0.6) -- (0.65,-0.6) ;
\draw (0,0.85) node{$\overbrace{{\color{white}..............}}$} ;
\draw (0,1.15) node{$g_i=w_{J_i}$} ;
\end{tikzpicture}
\end{center}

Let $D_i^{p_i}$ (resp. $D_i^{p_i'}$) be the subinterval $D_i\setminus\{p_i\}$ (resp. $D_i\setminus\{p_i'\}$) of $D_i$, and $k\in\ZZ$ such that $\ov{\ell}_i=\ov{m}_i^{-k}w_{D_i^{p_i}}$. Then $w\!_{D_i^{p_i'}}=(w_{D_i^{p_i}})^{g_i}$ and $a_i^{\ast}(w\!_{D_i^{p_i'}})=a_i^{\ast}(w_{D_i^{p_i}})=k$, so $\ov{\ell}_i^{g_i}=(\ov{m}_i^{-k})^{g_i}(w_{D_i^{p_i}})^{g_i}=\ov{m}_i'^{-k}w\!_{D_i^{p_i'}}=\ov{\ell}_i'$. As a result, the $q$--nilpotent peripheral systems $(N_qG_D,\ov{m},\ov{\ell})$ and $(N_qG_D,\ov{m}',\ov{\ell}')$ are equivalent by the isomorphism $\varphi=\text{id}_{N_qG_D}$, with conjugating elements $g_i=w_{J_i}$.
\end{proof}

The following proposition shows that the equivalence class of $q$--nilpotent peripheral system is an invariant of welded links up to $w_q$--equivalence.

\begin{prop}\label{qnpsinvRwq}
If two virtual diagrams differ by a Reidemeister move or by a surgery along a w-tree of degree $\geq q$, then they have equivalent $q$--nilpotent peripheral systems. Moreover, if the points on the diagrams associated to the meridians stay fixed and outside of the local move, then the isomorphism between the $q$--nilpotent peripheral systems is such that $g_i=1$ for $1\leq i\leq n$.
\end{prop}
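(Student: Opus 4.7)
The approach is a case analysis on the type of local move: each welded Reidemeister move and the w-tree surgery of degree $\geq q$ are treated separately. In every case the strategy is the same: exhibit an explicit isomorphism $\varphi:N_qG_D\to N_qG_{D'}$ and verify that it sends each $\ov{m}_i^{g_i}$ to $\ov{m}_i'$ and each $\ov{\ell}_i^{g_i}$ to $\ov{\ell}_i'$. Whenever the distinguished meridian points are chosen outside the region affected by the move, they correspond to the same Wirtinger generator on both sides and one can take $g_i=1$; the ``moreover'' clause then comes for free.

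For the welded Reidemeister moves, the argument actually works at the level of the unquotiented Wirtinger groups $G_D$ and $G_{D'}$. The virtual moves vR1, vR2, vR3 and the Mixed move neither create nor destroy classical crossings, and since arcs are delimited by undercrossings rather than virtual crossings, the two Wirtinger presentations coincide on the nose. The moves R1, R2, R3 and OC each introduce a bounded number of new arcs with defining Wirtinger relations of the shape $c=b^{a^{\pm 1}}$; a direct Tietze computation, performed once for each move, shows that the composition of these defining relations identifies the boundary arcs of the local disk in the same way on both sides. The longitude words passing through the region are equal because the contributions of the two extra undercrossings in R2 and OC cancel via the defining relations, while the R1 relation reduces to $c=a^a=a$, compensated by the linking-number correction $m_i^{-k}$ in the definition of $\ell_i$.

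For a surgery along a w-tree $T$ of degree $k\geq q$, I first expand $T$ into a collection of w-arrows using Figure~\ref{Figexpwtree} and then compare the Wirtinger presentations of $D$ and $D'$ directly. Each individual w-arrow in the expansion inserts two new classical crossings and one new arc $b'$ on the strand receiving its head, subject to the relation $b'=b^{a^{\pm 1}}$, where $b$ is the previous arc and $a$ is the generator carried by the tail strand; the next crossing immediately restores the arc $b$. Iterating this through the full expansion of $T$ and propagating along its edges as in Figure~\ref{Figpropgen}, the new arc $b'$ created on the strand under the head of $T$ satisfies a relation of the form $b'=b^{c}$, where $c$ is a commutator of weight $k$ in the generators associated to the tails of $T$. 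Hence $c\in\G_kG_{D'}\subseteq\G_qG_{D'}$, so $\ov{c}=1$ in $N_qG_{D'}$ and $\ov{b'}=\ov{b}$. It follows that every arc introduced by the surgery coincides in $N_qG_{D'}$ with a pre-existing arc, the presentation of $N_qG_{D'}$ collapses onto that of $N_qG_D$, and this gives the isomorphism $\varphi$. The same bookkeeping applied to the longitude word $w_{D_i^{p_i}}$ shows that the extra factor inserted at the head of $T$ is a conjugate of $c$, hence trivial modulo $\G_qG_{D'}$, so $\varphi(\ov{\ell}_i)=\ov{\ell}_i'$ and we may take $g_i=1$.

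The main obstacle is the commutator-weight bookkeeping for w-trees: one has to prove by induction on the number of trivalent vertices of $T$, using the propagation rule of Figure~\ref{Figpropgen}, that at each trivalent vertex two incoming elements of weights $k_1$ and $k_2$ get replaced by their commutator, which lies in $\G_{k_1+k_2}$. Once this weight count is made rigorous, every contribution produced by the surgery sits in $\G_qG_{D'}$ and is therefore erased in the nilpotent quotient, yielding the required equivalence of $q$--nilpotent peripheral systems.
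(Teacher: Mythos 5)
Your proof follows essentially the same route as the paper's: an explicit isomorphism of Wirtinger presentations for each Reidemeister move, and, for a surgery along a degree-$k$ w-tree, the observation that the single new Wirtinger relation conjugates the duplicated head generator by a commutator of weight $k$ in the tail generators, which is trivial in $N_qG_{D'}$ since $k\geq q$, so that $\varphi$ may be taken to fix all generators and $g_i=1$. The only point you leave implicit is the case where a meridian point lies inside the support of the local move (needed for the first assertion in full generality); the paper dispatches this by relocating the point before and after the move and invoking Proposition~\ref{equivqnps}, and a minor descriptive slip aside (a w-arrow surgery creates one classical and one virtual crossing on the receiving strand, not two classical ones), your argument is sound.
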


\begin{proof}
If two virtual diagrams $D$ and $D'$ differ by a Reidemeister move, there is an isomorphism $\wt{\varphi}:G_D\to G_{D'}$ sending each generator associated to an arc of $D$ outside the local move to the corresponding one in $D'$, inducing an isomorphism $\varphi:G=N_q G_D\to G'=N_q G_{D'}$. If the points associated to the meridians stay fixed outside of the local move, then we have $\varphi(\ov{m}_i)=\ov{m}_i'$, and it is easily checked for each Reidemeister move that $\varphi(\ov{\ell}_i)=\ov{\ell}_i'$. If some points associated to the meridians are inside the local move, we change their positions before and after the move to get to the previous case, which does not change the equivalence class of the $q$--nilpotent peripheral systems by Proposition~\ref{equivqnps}. \\

If $D'$ is obtained from $D$ by a surgery along a w-tree of degree $k\geq q$, the presentation of $G_{D'}$ differs from the presentation of $G_D$ by duplicating the generator associated to the arc of $D$ containing the head of the w-tree, and adding a Wirtinger relation between them, whose conjugating element is the commutator of weight $k$ obtained by propagating generators along the w-tree. Since $k\geq q$, this conjugating element is trivial in $N_qG_{D'}$, so the added Wirtinger relation identifies the duplicated generators. As a result, the map $\varphi:N_qG_D\to N_qG_{D'}$ sending each generator of $G_D$ to the corresponding one in $G_{D'}$ is an isomorphism, which gives an equivalence of $q$--nilpotent peripheral systems for which $g_i=1$ for $1\leq i\leq n$.
\end{proof}

Since the $q$--nilpotent peripheral system of a welded string link is equivalent to its Milnor invariants of length $\leq q$, by Proposition~\ref{invconc} it is an invariant of welded concordance. The same is true for the (equivalence class of) $q$--nilpotent peripheral system on welded links. This follows from Theorem 5.5 in~\cite{MCIKSB}, as it is easily checked that the existence of a welded concordance between two welded links implies that of a cut-concordance, as was the case for welded string links.

\begin{prop}\label{qnpsinvwC}{\cite[Thm. 5.5]{MCIKSB}}
If two virtual diagrams differ by a welded concordance, then they have equivalent $q$--nilpotent peripheral systems for all $q\geq 1$.
\end{prop}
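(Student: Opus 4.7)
The plan is to follow exactly the strategy sketched by the author for welded string links, namely to reduce to the cut-diagram invariance theorem [MCIKSB, Thm. 5.5] by exhibiting a cut-concordance realizing the given welded concordance. The content of the proposition is really the translation between the diagrammatic notion of welded concordance (a sequence of virtual diagrams related by Reidemeister moves, births, deaths, and saddles, with the topological constraint that each connected component of the underlying surface is an annulus) and the cut-concordance setting in which Theorem 5.5 is proved.

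First I would take a welded concordance between two virtual diagrams $D$ and $D'$ of $n$-component welded links, and build its underlying immersed surface $S \subset \RR^2 \times [0,1]$ by stacking the intermediate diagrams and gluing them along the local models of the Reidemeister moves, births, deaths, and saddles. By hypothesis, $S$ is a disjoint union of $n$ annuli $S_1, \ldots, S_n$ where $S_i$ has boundary the $i^{\text{th}}$ components of $D$ and $D'$. Next I would equip $S$ with a cut-diagram structure in the sense of Definition 5.1 of [MCIKSB]: the double-point locus of $S$ is a disjoint union of proper arcs and circles on $S$, and precisely as one does for a single virtual diagram (cutting the lower strand at each undercrossing) one cuts $S$ along the preimages of the lower sheet at each double-point curve, producing the decorated surface that serves as a cut-diagram.

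The verification that this construction is a cut-concordance amounts to checking that the conditions defining cut-concordance are preserved by each elementary move in the welded cobordism. Reidemeister moves correspond to the local ambient modifications of the cut-diagram already handled in [MCIKSB] (they are essentially the moves one makes when translating a welded link into a cut-diagram and back), births and deaths correspond to capping off closed components of the cut-diagram by disks, and saddles correspond to the obvious annular attachment, none of which disturb the cut-diagram structure away from the local model. The annular condition on each $S_i$ is exactly the condition needed for the resulting cut-diagram to be a cut-concordance rather than a mere cut-cobordism.

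Having produced a cut-concordance between $D$ and $D'$, Theorem 5.5 of [MCIKSB] applies directly and yields an equivalence of the $q$-nilpotent peripheral systems. As explained in the discussion preceding Proposition~\ref{qnpsinvwC}, this works uniformly for all $q \geq 1$ since the cut-diagram statement recovers the full peripheral system up to the conjugations by the $g_i$ appearing in Definition~\ref{defequivqnps}. The main obstacle, such as it is, is the verification in the second step: one must be sufficiently careful at the saddle moves, where two meridian base-points on distinct components of a single cross-section may be merged onto one component of the next cross-section (or conversely), since this is exactly where the conjugating elements $g_i$ are forced on us and where the equivalence relation on $q$-nilpotent peripheral systems (rather than honest equality) becomes indispensable. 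Once this bookkeeping is done, the rest is a direct appeal to [MCIKSB].
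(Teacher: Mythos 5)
Your proposal follows exactly the paper's route: the paper likewise reduces the statement to Theorem 5.5 of~\cite{MCIKSB} by observing that a welded concordance in the sense of Definition~\ref{wldconcdef} yields a cut-concordance between the two diagrams, with the double-point lines of the cut-diagram given by the preimages of the undercrossings, and your filling-in of the move-by-move verification (including the care needed at saddles) is consistent with what the paper leaves as ``easily checked.'' No gap; this is essentially the same argument.
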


As a result, the equivalence class of $q$--nilpotent peripheral system is an invariant of $w_q$--concordance on welded links.

\subsection{Classification of welded links up to $w_q$--concordance}\label{Sec32}

In this section, we will prove that the equivalence class of $q$-nilpotent peripheral systems is a complete invariant of $w_q$--concordance on welded links. The first step is to show that any tree presentation is $w_q$--equivalent to a sorted one (see Definition~\ref{defascsort}), whose associated group admits a presentation involving only meridians and longitudes. As a by-product, we obtain the Chen--Milnor presentation of $N_qG_D$ for any virtual diagram $D$. We then prove that two sorted tree presentations with equivalent $q$--nilpotent peripheral systems are $w_q$--concordant. This in turn gives a characterization of links with trivial Milnor invariants of length $\leq q$.

\begin{prop}\label{equivsortdiag}
For $q\geq 1$, any tree presentation is $w_q$--equivalent to a sorted presentation.
\end{prop}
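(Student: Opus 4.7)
The plan is to adapt to the cyclic setting of welded link components the inductive argument used in Corollary~\ref{equivascdiag} for string links. The essential simplifications are that being \emph{sorted} only requires heads and tails to occupy disjoint arcs on each component (with no prescribed linear order), and that we need only $w_q$--equivalence rather than $w_q$--concordance, so we must \textbf{not} invoke Proposition~\ref{permht}.

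First I would delete every w-tree of degree $\geq q$ by $w_q$--equivalence, leaving only w-trees of degree $1,\ldots,q-1$. On each component $D_i$ I would designate two disjoint complementary arcs $H_i$ (to receive heads) and $T_i$ (to receive tails). The available local moves are: the tail exchange (free, no extra w-trees); and, via Proposition~\ref{permttht}, the exchange of any two adjacent ends belonging to \emph{distinct} w-trees, at the cost of introducing w-trees of strictly higher degree whose heads lie next to pre-existing heads. Following the template of Corollary~\ref{equivascdiag}, the sorting would proceed by lexicographic induction on a $(q-1)$--tuple $t=(t_1,\ldots,t_{q-1})$ counting mispositioned ends of degree-$k$ w-trees; each basic move will strictly decrease the current coordinate of $t$ while only later coordinates may grow.

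The main obstacle, already present for string links, is that Proposition~\ref{permttht} does not allow exchanging the head of a w-tree $T$ with one of its own tails on the same component. In the string link case this was overcome by Proposition~\ref{permht}, which relies on welded concordance via saddle moves. Here I would exploit the circular topology instead. Whenever the head $h$ of $T$ is trapped in the wrong arc of $D_i$ by tails of $T$ itself, one such blocking tail can be pushed \emph{the long way around} $D_i$ -- along the arc not containing $h$ -- thereby moving it to the opposite side of $h$ without ever crossing $h$. Along that detour the migrating tail meets only either other tails of $T$ (handled by the free tail exchange) or ends of other w-trees (handled by Proposition~\ref{permttht}, producing new higher-degree w-trees whose heads land near existing heads). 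After finitely many such detours the head $h$ ceases to be trapped and can be ferried into $H_i$ using Proposition~\ref{permttht} alone.

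The main technical hurdle is the bookkeeping: each detour and each exchange injects w-trees of strictly higher degree, and the lex-induction must absorb them without looping. As in Corollary~\ref{equivascdiag}, this is ensured by the fact that every newly produced w-tree is of degree strictly greater than the one being processed, so the current coordinate of $t$ strictly decreases at each step; since $t$ has only $q-1$ coordinates and lies in a well-ordered set, the procedure terminates in finitely many steps and yields a sorted tree presentation $w_q$--equivalent to $(V,\T)$.
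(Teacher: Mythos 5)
Your proposal is correct and follows essentially the same route as the paper: the key point in both is that the circular topology of a link component lets you route an end of a w-tree around the side avoiding its own head, so that only the tail exchange and Proposition~\ref{permttht} are needed (never Proposition~\ref{permht}), and termination follows from the same lexicographic induction on tuples of mispositioned ends graded by degree. The paper phrases this as choosing, for each tail, the pushing direction opposite to its own head, rather than your "detour" of a blocking tail, but this is only a cosmetic difference.
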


\begin{proof}
The proof is similar to that of Corollary~\ref{equivascdiag}. The portion of strand $P_i$ which will contain the tails on the $i^{th}$ component of the sorted tree presentation can be chosen arbitrarily, starting as a small neighborhood of a point containing no head. We need to push the tails on this component toward $P_i$, with the notable difference from the case of string links being that we can choose the direction in which we push each tail. As a result, we can avoid having to permute a tail with the head of its w-tree, simply by pushing it in the direction opposite to its head if they are on the same component.

\begin{center}
\begin{tikzpicture}
\draw [thick] (0,0) circle (1.5) ;
\begin{scope}
\clip (1.25,-1.5) -- (1.6,-1.5) -- (1.6,1.5) -- (1.25,1.5) -- cycle ;
\draw [thick,red] (0,0) circle (1.5) ;
\end{scope}
\draw [red] (1.9,0) node{$P_i$} ;
\draw [-Stealth] (-0.4,0.52) -- (-1.1,1.02) ;
\fill [black] (-0.4,0.52) circle (0.04) ;
\draw (-0.4,0.32) -- (-0.4,0.52) -- (-0.22,0.57) ;
\draw [dotted] (-0.22,0.57) -- (0.32,0.72) ;
\draw [dotted] (-0.4,0.32) -- (-0.4,-0.32) ;
\draw (-0.4,-0.32) -- (-0.4,-0.52) ;
\fill [black] (-0.4,-0.52) circle (0.04) ;
\draw (-1.1,-1.02) -- (-0.4,-0.52) -- (-0.22,-0.57) ;
\draw [dotted] (-0.22,-0.57) -- (0.32,-0.72) ;
\begin{scope}
\clip (-1.3,-1.1) -- (0,-1.65) -- (0,-1.75) -- (-1.3,-1.75) -- cycle ;
\draw [thick] (0,0) circle (1.7) ;
\end{scope}
\draw [thick,-stealth] (0,-1.7) -- (0.1,-1.7) ;

\begin{scope}[xshift=5cm]
\draw [thick] (0,0) circle (1.5) ;
\begin{scope}
\clip (1.25,-1.5) -- (1.6,-1.5) -- (1.6,1.5) -- (1.25,1.5) -- cycle ;
\draw [thick,red] (0,0) circle (1.5) ;
\end{scope}
\draw [red] (1.9,0) node{$P_i$} ;
\draw (-0.4,0.52) -- (-1.1,1.02) ;
\fill [black] (-0.4,0.52) circle (0.04) ;
\draw (-0.4,0.32) -- (-0.4,0.52) -- (-0.22,0.57) ;
\draw [dotted] (-0.22,0.57) -- (0.32,0.72) ;
\draw [dotted] (-0.4,0.32) -- (-0.4,-0.32) ;
\draw (-0.4,-0.32) -- (-0.4,-0.52) ;
\fill [black] (-0.4,-0.52) circle (0.04) ;
\draw (-0.4,-0.52) -- (-0.22,-0.57) ;
\draw [-Stealth] (-0.4,-0.52) -- (-1.1,-1.02) ;
\draw [dotted] (-0.22,-0.57) -- (0.32,-0.72) ;
\begin{scope}
\clip (-1.3,1.1) -- (0,1.65) -- (0,1.75) -- (-1.3,1.75) -- cycle ;
\draw [thick] (0,0) circle (1.7) ;
\end{scope}
\draw [thick,-stealth] (0,1.7) -- (0.1,1.7) ;
\end{scope}
\end{tikzpicture}
\end{center}

Since the permutation of a tail with its head was the only move requiring a welded concordance in the proof of Corollary~\ref{equivascdiag}, we only need moves which can be achieved by $w_q$--equivalence in order to get to a sorted presentation. This can be done using the same induction process as in the aforementioned proof, first on the number of tails outside $P_i$, and then for each tail on the number of heads in between that tail and $P_i$ in the direction which we choose to push the tail.
\end{proof}

An important detail to note in the proof of Proposition~\ref{equivsortdiag} is that at no point in the process does any head enter the portion of strand $P_i$. By Proposition~\ref{qnpsinvRwq}, if we choose the meridian $m_i$ in $P_i$, then the isomorphism relating the $q$--nilpotent peripheral system of the original diagram to that of the sorted diagram is such that $g_i=1$. \\

If $(V,\T)$ is a sorted presentation of a virtual diagram $D$ with meridians on the arcs containing the tails, every generator can be expressed in terms of the meridians, and the only relations left are the commutation of each meridian with its longitude, so $G_D\simeq\gp{m_1,\ldots,m_n\,|\,[m_1,\ell_1],\ldots,[m_n,\ell_n]}$. By Proposition~\ref{gencom}, we obtain a presentation of $N_qG_D$ by adding the commutators of weight $\geq q$ on generators $m_i^{\pm 1}$ in the relations. Since any tree presentation is $w_q$--equivalent to a sorted one, and $w_q$--equivalent diagrams have isomorphic $q$--nilpotent groups, we obtain the following Chen--Milnor presentation, which was established by J. Milnor for classical links in~\cite{Mil57}:

\begin{prop}
For any virtual diagram $D$ and $q\geq 1$, we have:
$$N_q G_D\simeq\gp{m_1,\ldots,m_n\,|\,C_q,[m_1,\ov{\ell}_1],\ldots,[m_n,\ov{\ell}_n]},$$
where the $m_i$'s are meridians of $D$, the $\ov{\ell}_i$'s are the associated longitudes expressed in terms of the meridians and $C_q$ is the set of commutators of weight $\geq q$ on the generators $m_i^{\pm 1}$.
\end{prop}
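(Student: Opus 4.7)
The plan is to reduce to the case where $D$ admits a sorted tree presentation, and then read off the presentation of $N_q G_D$ directly. By Proposition~\ref{equivsortdiag}, starting from any tree presentation of $D$, we can produce a virtual diagram $D'$ admitting a sorted tree presentation which is $w_q$--equivalent to $D$. By Proposition~\ref{qnpsinvRwq}, this gives an isomorphism $N_q G_D \simeq N_q G_{D'}$ at the level of groups. The remark following the proof of Proposition~\ref{equivsortdiag} further ensures that we may choose the meridians $m_i$ of $D$ inside the portions of strand $P_i$ which will carry no head during the sorting process, so that the isomorphism sends each $\ov{m}_i$ to the corresponding meridian of $D'$ and each $\ov{\ell}_i$ to the corresponding longitude of $D'$. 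It therefore suffices to prove the claim for $D'$.

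For a sorted presentation with meridians placed on the tail portion, I would then unfold the group $G_{D'}$ as follows. After expanding the w-trees into w-arrows, every classical undercrossing on the $i$--th component sits inside the head region $H_i$, so the tail portion $P_i$ is a single arc carrying the generator $m_i$. Each of the arcs appearing in $H_i$ is obtained from the previous one by a Wirtinger-type conjugation $c = b^{a^{\pm 1}}$ in which $a$ is an overstrand coming from some w-arrow tail, i.e., a meridian of another component. Eliminating these intermediate generators recursively, all generators of $G_{D'}$ express as conjugates of meridians, and closing up the loop on the $i$--th component yields the single relation $m_i = m_i^{\ell_i}$, that is $[m_i,\ell_i]=1$. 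Hence
\[
G_{D'} \simeq \gp{m_1,\ldots,m_n \,\big|\, [m_1,\ell_1],\ldots,[m_n,\ell_n]}.
\]

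To pass to $N_q G_{D'} = G_{D'}/\G_q G_{D'}$, I would invoke Proposition~\ref{gencom}: since $G_{D'}$ is generated by $m_1,\ldots,m_n$, every element of $\G_q G_{D'}$ can be written as a product of commutators of weight $\geq q$ on the $m_i^{\pm 1}$. Adding the set $C_q$ of such commutators to the relations therefore imposes exactly the quotient by $\G_q G_{D'}$, yielding
\[
N_q G_{D'} \simeq \gp{m_1,\ldots,m_n \,\big|\, C_q, [m_1,\ov{\ell}_1],\ldots,[m_n,\ov{\ell}_n]}.
\]
Transporting back along the isomorphism $N_q G_D \simeq N_q G_{D'}$ gives the presentation claimed for $D$.

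The main technical point I expect to encounter is precisely the compatibility between the sorting reduction and the identification of the distinguished meridian/longitude pairs: one must be careful that the meridian chosen on $D$ is not displaced during the $w_q$--equivalence, so that the isomorphism $N_q G_D \simeq N_q G_{D'}$ really sends $\ov{m}_i,\ov{\ell}_i$ to their counterparts without an auxiliary conjugation factor. This is exactly what the refinement of Proposition~\ref{qnpsinvRwq} with $g_i=1$ provides, provided the sorting is started from a configuration in which the chosen meridian already lies in a strand region free of heads, which one can always arrange by a preliminary local isotopy before applying Proposition~\ref{equivsortdiag}.
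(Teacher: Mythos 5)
Your proposal follows essentially the same route as the paper: reduce to a sorted presentation via Proposition~\ref{equivsortdiag}, use Proposition~\ref{qnpsinvRwq} (with the $g_i=1$ refinement coming from keeping the meridians in the head-free region $P_i$) to identify the peripheral systems, eliminate the intermediate Wirtinger generators to get $G_{D'}\simeq\gp{m_1,\ldots,m_n\,|\,[m_i,\ell_i]}$, and invoke Proposition~\ref{gencom} to pass to the $q$--nilpotent quotient. The only slip is the parenthetical claim that the overstrands at undercrossings are meridians of \emph{another} component --- a w-arrow may run from the tail region of a component to the head region of that same component --- but this is immaterial since the argument only needs the conjugating elements to be meridian generators.
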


Similar Chen--Milnor type theorems on welded string links and welded links were already established by M. Chrisman in~\cite{MCICHR}, and by B. Audoux, J-B. Meilhan and A. Yasuhara in~\cite{MCIKSB} using the notion of cut diagrams.

\begin{prop}\label{classsortdiag}
Two sorted tree presentations which have equivalent $q$--nilpotent peripheral systems are $w_q$--concordant.
\end{prop}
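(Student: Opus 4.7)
The plan is to mimic the strategy of Lemma~\ref{classascdiag}, this time using the Chen--Milnor presentation of $N_qG_D$ established just above instead of reading longitudes directly off an arrow presentation. Up to welded Reidemeister moves on the underlying virtual diagrams, I may assume that $(V,\T)$ and $(V',\T')$ share the same underlying $V=V'$ consisting of $n$ trivial disjoint circles. On each component I would then choose the meridian inside the tail portion: since the propagation rule of Figure~\ref{Figpropgen} expresses the element at every head as an iterated commutator of the generators supporting its tails, the longitudes $\ell_i,\ell_i'$ read from these meridians are words in the $m_j^{\pm 1}$ alone. Using Proposition~\ref{equivqnps} to shift the meridians of one of the two presentations along the diagram absorbs the conjugating factors $g_i$ coming from the equivalence of peripheral systems, so that the isomorphism $\varphi$ becomes the identity matching $\ov{m}_i\leftrightarrow\ov{m}_i'$ and $\ov{\ell}_i\leftrightarrow\ov{\ell}_i'$ in the common $q$--nilpotent group.

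By the Chen--Milnor presentation, this identification means precisely that the two words $\ell_i$ and $\ell_i'$ in the free group on the $m_j$ differ by a finite product of insertions and deletions of three kinds: (a) trivial pairs $m_j^{\pm 1}m_j^{\mp 1}$; (b) commutators of weight $\geq q$ on the $m_j^{\pm 1}$, which by Proposition~\ref{gencom} generate $\G_qF_n$; and (c) conjugates of the Chen--Milnor relators $[m_i,\ell_i]$. Modifications of type (a) are realised by w-tree inverse moves, and type (b) by inserting or removing w-trees of degree $\geq q$; because the original presentation is sorted, the extremities of these auxiliary w-trees can be placed inside the head portions (for heads) and tail portions (for tails) without disturbing the sorted structure, so both reductions are achieved by pure $w_q$--equivalence.

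The main difficulty is type (c): realising the insertion or deletion of a conjugate of $[m_i,\ell_i]$ by a welded concordance that stays within the class of sorted tree presentations. I plan to achieve this through a birth--saddle--saddle--death sequence, initiated by the birth of a small empty circle adjacent to the tail portion of the target component, followed by a saddle move band-summing this circle into the component so that it runs once along a parallel copy of the portion carrying the longitude and traverses once the arc realising the desired meridian, and concluded by a complementary saddle and a death restoring the topology of the component. A direct computation on the resulting tree presentation, keeping track of the generators propagated through the w-trees, will show that the effect on the longitude is exactly the insertion of a conjugate of $[m_i,\ell_i]$, with the newly introduced w-tree extremities landing inside the head and tail portions respectively. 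Combining the three types of modifications then produces a sequence of $w_q$--equivalences and welded concordances from $(V,\T)$ to $(V',\T')$, establishing the proposition.
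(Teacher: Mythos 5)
Your overall strategy---compare longitude words through the Chen--Milnor presentation and realise the three types of relator insertions by inverse moves, $w_q$--equivalence and concordance respectively---is the same as the paper's, and your handling of types (a) and (b) is fine. But there are two genuine gaps. First, the conjugating elements $g_i$ cannot be absorbed by ``shifting the meridians along the diagram'': Proposition~\ref{equivqnps} only realises conjugation by the specific elements $w_{J_i}$ attached to subintervals $J_i$ of $D_i$ (essentially prefixes of the longitude word), whereas the $g_i$ in an equivalence of $q$--nilpotent peripheral systems are arbitrary elements of $N_qG_D$; moreover, moving the basepoint off the tail arc destroys the very feature you need, namely that the longitude reads directly as a word in the meridians. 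The paper instead modifies the diagram: it inserts, by inverse moves, heads spelling $g_i$ followed by $g_i^{-1}$ next to the tail arc of $V_i$, then re-sorts the tails into the gap between them by $w_q$--equivalence as in Proposition~\ref{equivsortdiag}; by Proposition~\ref{qnpsinvRwq} this yields a $w_q$--equivalent sorted presentation whose peripheral system is conjugated by exactly $g_i$, reducing to the case $g_i=1$.

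Second, the insertion of $[m_j,\ov{\ell}_j]^{\pm 1}$---the heart of the proof---is precisely the step you leave to ``a direct computation''. The paper's mechanism is concrete and different from your birth--saddle--saddle--death scheme: it births one auxiliary circle per required relator, creates the word $[m_j,\ov{\ell}_j]^{\pm 1}$ on that circle purely by arrow moves (a cancelling pair of w-arrows from the $m_j$ arc, one of whose tails is then slid around $V_j$ past the heads using inverse and slide moves), and only then attaches the circle to $V_i$ by a single saddle, so each surface component carries one birth and one saddle and is an annulus. Two subtleties that your sketch does not address are essential there: all auxiliary circles must be built \emph{before} any longitude is modified, since the words $\ov{\ell}_j$ being copied would otherwise have already changed; and for $j=i$ the relator cannot be produced ``in place'' on $V_i$ at all, because sliding a tail through its own head spawns new w-arrows indefinitely---this is exactly why the auxiliary circle is introduced. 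Your band-summed parallel copy has the right Euler characteristic count, but the claim that it spells exactly a conjugate of $[m_j,\ov{\ell}_j]$ at the correct spot of $\ell_i$ while preserving the sorted structure is not established, and in the self-insertion case $j=i$ it is not clear that it can be.
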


We now prove that the equivalence class of $q$--nilpotent peripheral systems classifies welded links up to $w_q$--concordance, starting with the special case of sorted presentations.

\begin{proof}
By expanding the w-trees, we can assume that the sorted presentations are arrow presentations, denoted $(V,\A)$ and $(V,\A')$ respectively. Let $D$ and $D'$ be the virtual diagrams obtained from these presentations. By Proposition~\ref{equivsortdiag}, we can assume the points $p_i$ (resp. $p_i'$) associated to the meridians of the presentation $(V,\A)$ (resp. $(V,\A')$) are placed on the arcs containing the tails. We first deal with the case of an isomorphism $\varphi:N_qG_D\to N_qG_{D'}$ such that $g_i=1$ for $1\leq i\leq n$. Using the same notation as in the previous section, and performing some isolated arrow moves if necessary, we can assume that the words $l_i:=w_{D_i^{p_i}}$ (resp. $l_i':=w\!_{D_i'^{p_i'}}$) satisfy $a_i^{\ast}(l_i)=0$ (resp. $a_i'^{\ast}(l_i)=0$). Here the $l_i$'s (resp. $l_i'$'s) are considered as words on the $m_j^{\pm 1}$ (resp. $m_j'^{\pm 1}$'s) without any relation between the generators. In order to compare $l_i$ and $l_i'$, we introduce the word $\tilde{l}_i'$ on the $m_j^{\pm 1}$ obtained by replacing $m_j'$ by $m_j$ in $l_i'$. Then the words $l_i$ and $\tilde{l}_i'$ both represent the longitude $\ov{\ell}_i=\varphi^{-1}(\ov{\ell}_i')\in N_qG_D$. As a result, by the presentation of $N_qG_D$ given above, we can go from $l_i$ to $\tilde{l}_i'$ by adding/deleting occurences of $m_jm_j^{-1}$, $m_j^{-1}m_j$, commutators of weight $\geq q$ on the $m_j$'s, and $[m_j,\ov{\ell}_j]^{\pm 1}$. \\

For each $1\leq i\leq n$ and each addition/deletion of $[m_j,\ov{\ell}_j]^{\pm 1}$ in $\ell_i$, we preemptively create one closed component, which will be attached to the $i^{th}$ component of $(V,\A)$. By an inverse move, we add a pair of cancelling w-arrows from the arc of $V_j$ containing the meridian $m_j$ to the new closed component, then we slide the tail of the non-twisted arrow along $V_j$ with the reverse orientation, using inverse and slide moves to cross the heads (see Figure~\ref{FigcrossR2R3}). This has the effect of creating the word $[m_j,\ov{\ell}_j]$ on the new component. To create the word $[m_j,\ov{\ell}_j]^{-1}=[\ov{\ell}_j,m_j]$, we slide the tail of the twisted arrow instead of the non-twisted one.

\begin{center}
\begin{tikzpicture}
\begin{scope}[xshift=0cm]
\draw [thick,-Stealth] (0,0) -- (0,4) ;
\draw (0,-0.5) node{$V_i$} ;
\draw [thick,-Stealth] (1,0) -- (1,4) ;
\draw (1,-0.5) node{$V_j$} ;
\draw [thick] (0.9,2) -- (1.1,2) ;
\draw (1.42,1.95) node{$m_j$} ;
\end{scope}

\begin{scope}[xshift=3.5cm]
\draw [thick,-Stealth] (0,0) -- (0,4) ;
\draw (0,-0.5) node{$V_i$} ;
\draw [thick] (1,0.5) -- (1,3.5) ;
\draw [thick] (1,3.5) .. controls +(0,0.7) and +(0,0.7) .. (2,3.5) ;
\draw [thick] (1,0.5) .. controls +(0,-0.7) and +(0,-0.7) .. (2,0.5) ;
\draw [thick] (2,0.5) -- (2,3.5) ;
\draw [thick,-Stealth] (1,1.25) -- (1,1.15) ;
\draw [thick,dashed,blue] (0,2) -- (1,2) ;
\draw [thick,-Stealth] (3,0) -- (3,4) ;
\draw (3,-0.5) node{$V_j$} ;
\draw [thick] (2.9,2) -- (3.1,2) ;
\fill [red] (3.3,2.75) circle (0.045) ;
\draw [thick,dashed,red] (3.3,2.75) -- (3.3,0.5) ;
\draw [thick,dashed,red] (3.3,0.5) .. controls +(0,-0.4) and +(0,-0.4) .. (3.8,0.5) ;
\draw [thick,dashed,red] (3.8,0.5) -- (3.8,3.5) ;
\draw [thick,dashed,red] (3.8,3.5) .. controls +(0,0.4) and +(0,0.4) .. (3.3,3.5) ;
\draw [thick,dashed,red,-stealth] (3.3,3.5) -- (3.3,3.1) ;
\fill [white] (3.15,1.7) -- (3.65,1.7) -- (3.65,2.28) -- (3.15,2.28) -- cycle ;
\draw (3.42,1.95) node{$m_j$} ;
\draw [-Stealth] (3,2.75) -- (2,2.75) ;
\draw [-Stealth] (3,1.25) -- (2,1.25) ;
\fill [black] (2.55,1.25) circle (0.07) ;
\end{scope}

\begin{scope}[xshift=9.2cm]
\draw [thick,-Stealth] (0,0) -- (0,4) ;
\draw (0,-0.5) node{$V_i$} ;
\draw [thick] (1,0.5) -- (1,3.5) ;
\draw [thick] (1,3.5) .. controls +(0,0.7) and +(0,0.7) .. (2,3.5) ;
\draw [thick] (1,0.5) .. controls +(0,-0.7) and +(0,-0.7) .. (2,0.5) ;
\draw [thick] (2,0.5) -- (2,3.5) ;
\draw [thick,-Stealth] (1,1.25) -- (1,1.15) ;
\draw [thick,dashed,blue] (0,2) -- (1,2) ;
\draw [thick,-Stealth] (3.7,0) -- (3.7,4) ;
\draw (3.7,-0.5) node{$V_j$} ;
\draw [thick] (3.6,2) -- (3.8,2) ;
\draw (4.12,1.95) node{$m_j$} ;
\draw [-Stealth] (3.7,2.3) -- (2,2.3) ;
\draw [-Stealth] (2.5,2.8) -- (2,2.8) ;
\draw (2.3,3.18) node{$\vdots$} ;
\draw [-Stealth] (2.5,3.4) -- (2,3.4) ;
\draw (2.7,3.1) node{$\Big{\}}$} ;
\draw (3.1,3.1) node{$\ov{\ell}_j$} ;
\draw [-Stealth] (2.5,1.8) -- (2,1.8) ;
\draw (2.3,1.58) node{$\vdots$} ;
\draw [-Stealth] (2.5,1.2) -- (2,1.2) ;
\draw (2.7,1.5) node{$\Big{\}}$} ;
\draw (3.2,1.5) node{$\ov{\ell}_j^{-1}$} ;
\draw [-Stealth] (3.7,0.7) -- (2,0.7) ;
\fill [black] (2.9,0.7) circle (0.07) ;
\end{scope}
\end{tikzpicture}
\end{center}

We need to create these extra closed components in order to make sure that the words $\ell_i$ representing the longitudes are not modified before all the required words $[m_j,\ov{\ell}_j]^{\pm 1}$ have been created. Moreover, trying to insert the word $[m_i,\ov{\ell}_i]^{\pm 1}$ directly into the $i^{th}$ component without creating an extra one would result in a neverending loop as new w-arrows would constantly be created when trying to slide the tail of the w-arrow illustrated above through its head. \\

Once these extra components have been added, we can start to implement the necessary modifications on $(V,\A)$ to go from $l_i$ to $\tilde{l}_i'$: an addition/deletion of $m_jm_j^{-1}$ or $m_j^{-1}m_j$ can be achieved by an inverse move, the addition/deletion of a commutator of weight $\geq q$ on the $m_j^{\pm 1}$ can be achieved by $w_q$--equivalence, and the addition of $[m_j,\ov{\ell}_j]^{\pm 1}$ in the word $l_i$ can be achieved by attaching one of the extra components containing the word $[m_j,\ov{\ell}_j]^{\pm 1}$ to $V_i$ on the appropriate arc by a saddle move. In order to delete such a commutator, we insert its inverse by the same process, then cancel out the product by inverse moves. As a result, we can modify $(V,\A)$ into a $w_q$--concordant tree presentation $(V,\T'')$ whose longitudes are represented by the same words as those of $(V,\A')$. After expanding the w-trees, we hence obtain an arrow presentation equal to $(V,\A')$, and as a result $(V,\A)$ and $(V,\A')$ are $w_q$--concordant. \\

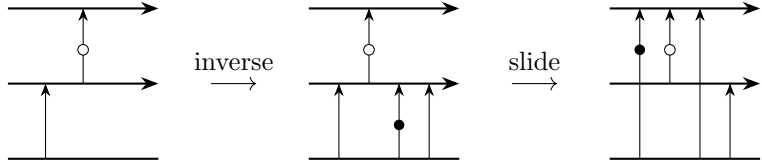
\begin{figure}
\centering
\begin{tikzpicture}
\draw [thick,-Stealth] (0,2) -- (2,2) ;
\draw [thick,-Stealth] (0,1) -- (2,1) ;
\draw [thick] (0,0) -- (2,0) ;
\draw [-Stealth] (0.5,0) -- (0.5,1) ;
\draw [-Stealth] (1,1) -- (1,2) ;
\draw [fill=white] (1,1.45) circle (0.07) ;

\draw (3,1.3) node{inverse} ;
\draw [->] (2.7,1) -- (3.3,1) ;

\begin{scope}[xshift=4cm]
\draw [thick,-Stealth] (0,2) -- (2,2) ;
\draw [thick,-Stealth] (0,1) -- (2,1) ;
\draw [thick] (0,0) -- (2,0) ;
\draw [-Stealth] (0.4,0) -- (0.4,1) ;
\draw [-Stealth] (0.8,1) -- (0.8,2) ;
\draw [fill=white] (0.8,1.45) circle (0.07) ;
\draw [-Stealth] (1.2,0) -- (1.2,1) ;
\fill [black] (1.2,0.45) circle (0.07) ;
\draw [-Stealth] (1.6,0) -- (1.6,1) ;

\draw (3,1.3) node{slide} ;
\draw [->] (2.7,1) -- (3.3,1) ;
\end{scope}

\begin{scope}[xshift=8cm]
\draw [thick,-Stealth] (0,2) -- (2,2) ;
\draw [thick,-Stealth] (0,1) -- (2,1) ;
\draw [thick] (0,0) -- (2,0) ;
\draw [-Stealth] (0.4,0) -- (0.4,2) ;
\fill [black] (0.4,1.45) circle (0.07) ;
\draw [-Stealth] (0.8,1) -- (0.8,2) ;
\draw [fill=white] (0.8,1.45) circle (0.07) ;
\draw [-Stealth] (1.2,0) -- (1.2,2) ;
\draw [-Stealth] (1.6,0) -- (1.6,1) ;
\end{scope}
\end{tikzpicture}
\caption{Crossing a head using inverse and slide moves.}\label{FigcrossR2R3}
\end{figure}

We now deal with the general case of an isomorphism $\varphi$ with potentially non trivial conjugating elements $g_i\in N_qG_D$. We will modify $(V,\A)$ in order to get to the configuration above. Using inverse moves, we add w-arrow heads representing the word $g_1$ followed by the word $g_1^{-1}$ next to the arc containing the tails on $V_1$. As in the proof of Proposition~\ref{equivsortdiag}, we can slide the tails on $D_1$ to the portion of strand between the words $g_1$ and $g_1^{-1}$ by using $w_q$--equivalence. Moreover, we can preserve the sorted structure of the other components, and keep all the heads outside of the portion of strand between the words $g_1$ and $g_1^{-1}$. We obtain a sorted presentation $(V,\T'')$ of a virtual diagram $D''$ which is $w_q$--equivalent to $D$, with a $q$--nilpotent peripheral system $(N_qG_{D''},m'',\ov{\ell}'')$ such that each meridian $m_i''$ is placed on the arc containing the tails. By the proof of Proposition~\ref{equivqnps} and by Proposition~\ref{qnpsinvRwq}, the $q$--nilpotent peripheral system $(N_qG_{D''},m'',\ov{\ell}'')$ is equivalent to $(N_qG_D,m,\ov{\ell})$ by an isomorphism $\psi:N_qG_{D''}\to N_qG_D$ such that $\psi(m_1'')=m_1^{g_1}$, $\psi(\ov{\ell}_1'')=\ov{\ell}_1^{g_1}$ and $\psi(m_i'')=m_i$, $\psi(\ov{\ell}_i'')=\ov{\ell}_i$ for $i\geq 2$. The isomorphism $\varphi\circ\psi:N_qG_{D''}\to N_qG_{D'}$ sends $m_1''$ to $m_1'$, $\ov{\ell}_1''$ to $\ov{\ell}_1'$, and $(m_i'')^{\psi^{-1}(g_i)}$ to $m_i'$, $(\ov{\ell}_i'')^{\psi^{-1}(g_i)}$ to $\ov{\ell}_i'$ for $i\geq 2$. By iterating this process for each component, we obtain a sorted tree presentation which is $w_q$--equivalent to $(V,\A)$ and whose $q$--nilpotent peripheral system is equivalent to that of $(V,\A')$ by an isomorphism which sends meridians to meridians and longitudes to longitudes. By the special case above, this concludes the proof of the proposition.
\end{proof}

Note that contrary to the case of welded string links, the notion of welded concordance is not used to obtain a sorted presentation, but is necessary to relate two sorted presentations with the same invariants. Using Propositions~\ref{qnpsinvRwq}, \ref{qnpsinvwC} and \ref{classsortdiag}, we obtain the main result of the section:

\begin{theo}\label{classwldlk}
The equivalence class of $q$--nilpotent peripheral systems is a complete invariant of $w_q$--concordance on welded links.
\end{theo}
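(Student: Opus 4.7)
The plan is to assemble Theorem~\ref{classwldlk} from the three main results already established in this section, treating the two implications separately.

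For the forward implication (invariance), I would argue that if two virtual diagrams $D$ and $D'$ represent $w_q$--concordant welded links, then by definition they are related by a finite sequence of welded Reidemeister moves, surgeries along w-trees of degree $\geq q$, and welded concordances. By Proposition~\ref{qnpsinvRwq}, each Reidemeister move and each degree $\geq q$ surgery preserves the equivalence class of $q$--nilpotent peripheral system, and by Proposition~\ref{qnpsinvwC} welded concordance does as well. Composing the equivalences of peripheral systems along the sequence yields an equivalence between the $q$--nilpotent peripheral systems of $D$ and $D'$.

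For the completeness implication, suppose $L$ and $L'$ are welded links with equivalent $q$--nilpotent peripheral systems. I would pick arbitrary tree presentations $(V,\T)$ and $(V',\T')$ of $L$ and $L'$ respectively, and apply Proposition~\ref{equivsortdiag} to each in order to obtain sorted tree presentations $(V,\wt{\T})$ and $(V',\wt{\T}')$ that are $w_q$--equivalent to the originals. The key observation is that $w_q$--equivalence is in particular $w_q$--concordance, and by the forward implication (or directly by Proposition~\ref{qnpsinvRwq}) it preserves the equivalence class of $q$--nilpotent peripheral system. Consequently, the sorted presentations $(V,\wt{\T})$ and $(V',\wt{\T}')$ still have equivalent $q$--nilpotent peripheral systems, so Proposition~\ref{classsortdiag} applies and yields a $w_q$--concordance between them. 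Transitivity of $w_q$--concordance then gives a $w_q$--concordance from $L$ to $L'$.

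The bulk of the difficulty has already been absorbed into Propositions~\ref{equivsortdiag} and \ref{classsortdiag}: the first requires carefully controlling the direction in which each tail of a w-tree is pushed in order to avoid having to swap a head with one of its own tails (which would have forced the use of welded concordance, as in Proposition~\ref{permht}), and the second relies on the Chen--Milnor style presentation together with the construction of extra closed components to inject the commutator relators $[m_j,\ov{\ell}_j]^{\pm 1}$ via saddle moves. Once these are in place, the proof of Theorem~\ref{classwldlk} is essentially a bookkeeping argument, and I do not expect any substantial obstacle beyond noting that the two directions above close the loop.
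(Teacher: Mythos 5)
Your proposal is correct and follows essentially the same route as the paper, which derives the theorem directly from Propositions~\ref{qnpsinvRwq} and \ref{qnpsinvwC} for invariance and from Propositions~\ref{equivsortdiag} and \ref{classsortdiag} for completeness. The assembly argument you give is exactly the intended one.
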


The Milnor invariants of a welded link can also be obtained as the coefficients of the power series $E_q(\ov{\ell}_i)$. However, as in the case of classical links, because of the remaining relations in $N_qG_D$ the Milnor invariants of length $q$ are only well defined modulo some of the Milnor invariants of length $<q$, and only the first non-vanishing Milnor invariants of the link are well-defined as integers. We will use Theorem~\ref{classwldlk} to give a characterization of welded links with Milnor invariants of length $\leq q$ equal to zero. This will require a more precise description of the Chen--Milnor presentation, using the Chen maps defined below. \\

For a virtual diagram $D$, let $\gp{a_{ij}}$ be the free group with one generator $a_{ij}$ for each arc of $D_i$. Once the points $p_i$ associated to the meridians $m_i=a_{i0}$ are chosen, we denote by $w_{ij}$ the word $w_J$, where $J$ is the subinterval of $D_i$ from $p_i$ to a point on the arc associated to the generator $a_{ij}$. The group $G_D$ is then equal to $\gp{a_{ij}\,|\,a_{ij}=m_i^{w_{ij}}}$.

\begin{de}\label{chenmaps}
The Chen maps $\eta_q:\gp{a_{ij}}\to F_n=\gp{m_i}$ associated to a virtual diagram $D$ are defined by induction on $q\geq 1$ by setting $\eta_1(a_{ij}):=m_i$ for $1\leq i\leq n$, and $\eta_{q+1}(a_{ij}):=\eta_q\big(m_i^{w_{ij}}\big)$ for $q\geq 1$ and $1\leq i\leq n$.
\end{de}

It is easily checked that $\eta_{q+1}\equiv\eta_q\!\mod\G_qG_D$ for $q\geq 1$. Moreover, the Chen maps induce homomorphisms $\ov{\eta}_q:N_qG_D\to N_q\gp{m_i\,|\,[m_i,\eta_q(\ell_i)]}$, and these are isomorphisms (see \cite{MCICHR}). The Milnor invariants of length $\leq q$ of $D$ can be obtained from the power series $E_q(\ov{\eta}_q(\ov{\ell}_i))$, as the Chen maps give a way to express the longitudes in terms of the meridians.

\begin{cor}\label{classwldunlk}
A welded link is $w_q$--concordant to the unlink if and only if it has vanishing Milnor invariants of length $\leq q$.
\end{cor}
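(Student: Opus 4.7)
The plan is to reduce the statement to Theorem~\ref{classwldlk} by comparing the $q$-nilpotent peripheral system of $L$ with that of the $n$-component unlink $U_n$. Since the group of $U_n$ is free on its meridians and its longitudes are trivial, the unlink's $q$-nilpotent peripheral system is $(N_qF_n, (m_i), (1))$. By Theorem~\ref{classwldlk} it then suffices to show that $L$ has vanishing Milnor invariants of length $\leq q$ if and only if its $q$-nilpotent peripheral system is equivalent to that of $U_n$.

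For the ``only if" direction, I would observe that if $L$ is $w_q$-concordant to $U_n$, Theorem~\ref{classwldlk} provides an isomorphism $\varphi : N_qG_L \to N_qF_n$ together with elements $g_i\in N_qG_L$ such that $\varphi(\ov{\ell}_i^{g_i}) = 1$; injectivity of $\varphi$ forces $\ov{\ell}_i = 1$ in $N_qG_L$, so that $\ov{\eta}_q(\ov{\ell}_i) = 1$, and the vanishing of all Milnor invariants of length $\leq q$ follows immediately since these are the coefficients of $E_q(\ov{\eta}_q(\ov{\ell}_i))$.

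For the converse direction, I would combine the Chen--Milnor presentation
$$N_qG_L \simeq N_q\gp{m_1,\ldots,m_n \,|\, [m_i,\eta_q(\ell_i)], \, 1 \leq i \leq n}$$
with the injectivity of the Magnus expansion from Proposition~\ref{injMagexp}. Vanishing of the Milnor invariants of length $\leq q$ means $E_q(\ov{\eta}_q(\ov{\ell}_i)) = 1 \in U_1(\Sq)$, hence $\eta_q(\ell_i) \in \G_qF_n$ by injectivity. The relators $[m_i,\eta_q(\ell_i)]$ then lie in $\G_{q+1}F_n \subset \G_qF_n$ and become trivial in $N_qF_n$, so the presentation collapses to an isomorphism $N_qG_L \simeq N_qF_n$ sending each $\ov{m}_i$ to $m_i$ and each $\ov{\ell}_i$ to $1$. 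This is precisely the $q$-nilpotent peripheral system of $U_n$, so a single application of Theorem~\ref{classwldlk} yields the $w_q$-concordance between $L$ and $U_n$.

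The main obstacle is this converse direction, specifically the algebraic step of extracting an equivalence of peripheral systems from the vanishing of the Milnor invariants via the Chen--Milnor presentation; one must carefully check that the relators become trivial in $N_qF_n$ once $\eta_q(\ell_i)\in\G_qF_n$. Once this identification is in place, the geometric content is immediately supplied by Theorem~\ref{classwldlk}.
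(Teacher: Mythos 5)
Your overall strategy is the same as the paper's: both directions are reduced to Theorem~\ref{classwldlk} by identifying the $q$--nilpotent peripheral system of $L$ with that of the unlink via the Chen--Milnor presentation and the injectivity of the Magnus expansion, and your ``only if'' direction matches the paper's verbatim in substance.

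However, your converse direction has a genuine gap, and it is not the one you flagged. You write that vanishing of the Milnor invariants of length $\leq q$ ``means $E_q(\ov{\eta}_q(\ov{\ell}_i))=1\in U_1(\Sq)$, hence $\eta_q(\ell_i)\in\G_qF_n$ by injectivity.'' But Proposition~\ref{injMagexp} gives injectivity of $E_q$ on $N_qF_n$, whereas $\ov{\eta}_q(\ov{\ell}_i)$ a priori lives in the quotient $N_q\gp{m_i\,|\,[m_i,\eta_q(\ell_i)]}$; identifying this quotient with $N_qF_n$ requires knowing that the relators $[m_i,\eta_q(\ell_i)]$ already lie in $\G_qF_n$, which is essentially what you are trying to prove. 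Relatedly, as the paper stresses just before this corollary, for links the Milnor invariants of length $k$ are only well defined modulo lower-order ones, so ``all Milnor invariants of length $\leq q$ vanish'' does not immediately hand you the single equation $E_q(\eta_q(\ell_i))\equiv 1$; one must first resolve the indeterminacy. The paper does this by an induction on $1\leq k\leq q$: assuming $\eta_{k-1}(\ell_i)\in\G_{k-1}F_n$, the congruence $\eta_k\equiv\eta_{k-1}\bmod\G_{k-1}F_n$ forces $[m_i,\eta_k(\ell_i)]\in\G_kF_n$, so the target of $\ov{\eta}_k$ genuinely is $N_kF_n$, the length-$k$ invariants are honest integers, and only then does Proposition~\ref{injMagexp} yield $\eta_k(\ell_i)\in\G_kF_n$ for the next step. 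The step you singled out as the main obstacle --- that the relators become trivial in $N_qF_n$ once $\eta_q(\ell_i)\in\G_qF_n$ --- is in fact immediate ($[F_n,\G_qF_n]=\G_{q+1}F_n$); the real work is the bootstrapping that legitimizes applying the injectivity of $E_k$ at each length $k$, and your one-shot argument skips it.
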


\begin{proof}
Let $L$ be a welded link represented by a virtual diagram $D$. If $L$ is $w_q$--concordant to the unlink, by Theorem~\ref{classwldlk} its $q$--nilpotent peripheral system is equivalent to the $q$--nilpotent peripheral system of the unlink, and in particular the longitudes $\ov{\ell}_i$ are trivial in $N_qG_D$. As a result, $E_q(\ov{\eta}_q(\ov{\ell}_i))=1$ for $1\leq i\leq n$, and the Milnor invariants of length $\leq q$ of $L$ are all zero. \\

Conversely, suppose that the Milnor invariants of length $\leq q$ of $L$ are all zero. We prove by induction on $1\leq k\leq q$ that $\eta_k(\ell_i)\in\G_kF_n$ for $1\leq i\leq n$. For $k=1$ it is trivially true since $\G_1F_n=F_n$. If it is true for $k-1$, then $\eta_{k-1}(\ell_i)\in\G_{k-1}F_n$, and since $\eta_k\equiv\eta_{k-1}\!\mod\G_{k-1}F_n$, we have:
$$[m_i,\eta_k(\ell_i)]\equiv[m_i,\eta_{k-1}(\ell_i)]\equiv 1\!\mod\G_kF_n,$$
hence $[m_i,\eta_k(\ell_i)]\in\G_kF_n$. As a result, the induced Chen map $\ov{\eta}_k$ is:
$$\ov{\eta}_k:N_kG_D\to N_k\gp{m_i\,|\,[m_i,\eta_k(\ell_i)]}\simeq N_kF_n.$$
The Milnor invariants of length $k$ of $L$ are then the coefficients of the monomials of degree $k-1$ of the power series $E_k(\ov{\eta}_k(\ov{\ell}_i))\in U_1(\Sn)/(1+\X^k)$. Since these are zero by our hypothesis on $L$, $E_k(\ov{\eta}_k(\ov{\ell}_i))=1$, and by Proposition~\ref{injMagexp} we have $\ov{\eta}_k(\ov{\ell}_i)=1\in N_kF_n$ for $1\leq i\leq n$. As a result $\eta_k(\ell_i)\in\G_kF_n$, which concludes the induction on $k$. The $q$--nilpotent peripheral system of $L$ is then equivalent to that of the unlink by the induced Chen map $\ov{\eta}_q:N_qG_D\to N_qF_n$, which sends the meridians of $L$ to the generators of $N_qF_n$ and the longitudes of $L$ to $1$. By Theorem~\ref{classwldlk}, $L$ is $w_q$--concordant to the unlink.
\end{proof}

As a result, we have the following diagrammatical characterization of classical links with vanishing Milnor invariants of length $\leq q$:

\begin{cor}\label{classclsunlk}
A classical link has vanishing Milnor invariants of length $\leq q$ if and only if it is $w_q$--concordant to the unlink as a welded link.
\end{cor}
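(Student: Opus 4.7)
The plan is to deduce this statement directly from Corollary~\ref{classwldunlk} together with the fact that classical links embed into welded links. The reasoning will be almost immediate, so the task is mainly to justify that the two notions of Milnor invariants (classical vs.\ welded) agree on a classical link, so that the hypothesis and conclusion match up cleanly.

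More precisely, let $L$ be a classical link, and consider it as a welded link by taking a classical diagram for it, which is in particular a virtual diagram with no virtual crossings. The welded Milnor invariants of $L$ as defined in Section~\ref{Sec2} are extracted from the $q$--nilpotent peripheral system $(N_qG_D,\ov{m},\ov{\ell})$ via the Magnus expansion $E_q$. On the other hand, the classical Milnor invariants of $L$ are computed from the Wirtinger presentation of $\pi_1(S^3\setminus L)$ together with its peripheral data. The first step of the proof is to observe that the Wirtinger presentation used to define $G_D$ (Definition~\ref{GDgroup}) is literally the Wirtinger presentation of the fundamental group of the classical link complement, and the diagrammatic meridians and longitudes of Section~\ref{Sec121} match the usual topological ones. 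Hence the two notions of Milnor invariants coincide on classical links.

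Once this identification is made, the corollary becomes a direct consequence of Corollary~\ref{classwldunlk}: a classical link $L$ has vanishing Milnor invariants of length $\leq q$ if and only if, viewed as a welded link, it has vanishing welded Milnor invariants of length $\leq q$, which by Corollary~\ref{classwldunlk} is equivalent to being $w_q$--concordant to the unlink. Note that the statement is an equivalence of conditions on $L$ as a welded object, so there is no need to recover a classical $w_q$--concordance from the welded one; the injectivity of classical links into welded links (Theorem 1.B of~\cite{GouPolVir}) is only needed to ensure that the welded viewpoint is a faithful extension of the classical one.

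The only potential subtlety, and thus the main point to be careful with, is the agreement between the classical and welded definitions of the Milnor invariants on a classical diagram. This is however already implicit throughout the paper, since the welded construction is designed to extend the classical Wirtinger-based construction; so the argument reduces to a short appeal to Corollary~\ref{classwldunlk}.
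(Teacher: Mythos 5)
Your proposal is correct and follows the same route as the paper, which simply derives Corollary~\ref{classclsunlk} as an immediate consequence of Corollary~\ref{classwldunlk} using the fact that for a classical diagram the group $G_D$, meridians and longitudes of Section~\ref{Sec121} coincide with the classical Wirtinger data, so the two notions of Milnor invariants agree. Your observation that injectivity of classical links into welded links is not actually needed here (since both sides of the equivalence are conditions on the welded object) is a correct and slightly sharper reading than the paper's passing reference to it.
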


This is to be compared to the result of J. Conant, R. Schneiderman and P. Teichner on the classification of $C_q$--concordance on classical links {\cite[Cor. 3]{ConSchTei}}. As this classification involves the additional higher-order Sato-Levine and Arf invariants, it is to be expected that some classical links might be $w_q$--concordant to the unlink as a welded link but not $C_q$--concordant to it as a classical link. Note that by Theorem 8.1 of \cite{MeiYas}, welded knots are already trivial up to $w_q$--equivalence (and hence up to $w_q$--concordance), and by Theorem 5 of \cite{Gau}, welded long knots (i.e. welded string links with one component) are trivial up to welded concordance, so the case of one component is always trivial up to $w_q$--concordance. We give an example below illustrating the difference between $C_q$--concordance and $w_q$--concordance on a link with $2$ components.

\paragraph{Example:}
Illustrated in Figure~\ref{Figex} is a link which is $w_3$--concordant to the unlink, as it has trivial Milnor invariants of length $\leq 3$, but not $C_3$--concordant to the unlink, as it has non-trivial Sato-Levine invariants of order 2 (see \cite{WTCCL} Section 5.1 for indications on how to compute these invariants). This link is obtained, modulo one $R1$ move, by performing a clasper surgery on the unlink along the tree of order 1 (or degree 2) represented on the left. Due to the self-annihilation relation $[X,X]=0$ in a Lie algebra, the Milnor invariants of length $3$, the first potentially non-trivial ones, vanish. The Sato-Levine invariants are designed to detect such trees, as they take their values in a quasi-Lie algebra where the self-annihilation relation is replaced by the antisymmetry $[Y,X]=-[X,Y]$.

\begin{figure}
\centering
\begin{tikzpicture}
\draw (-0.5,-0.05) circle (0.1) ;
\fill [white] (-0.41,-0.035) circle (0.05) ;
\draw (0.5,-0.05) circle (0.1) ;
\fill [white] (0.59,-0.06) circle (0.05) ;
\draw [thick] (0,-0.75) ellipse (1.5 and 0.75) ;
\draw [thick,-stealth] (0.05,-1.5) -- (-0.05,-1.5) ;
\draw (0,-1.95) node{$1$} ;
\draw (0,1.3) circle (0.1) ;
\fill [white] (-0.1,1.3) circle (0.05) ;
\draw [thick] (0,1.8) ellipse (1 and 0.5) ;
\draw [thick,-stealth] (-0.05,2.3) -- (0.05,2.3) ;
\draw (0,2.7) node{$2$} ;
\draw (-0.45,0.05) -- (0,0.65) -- (0.45,0.05) ;
\fill [white] (-0.59,-0.06) circle (0.05) ;
\begin{scope}
\clip (-0.5,-0.25) -- (-0.7,-0.25) -- (-0.7,0.15) -- (-0.5,0.15) -- cycle ;
\draw (-0.5,-0.05) circle (0.1) ;
\end{scope}
\fill [white] (0.41,-0.035) circle (0.05) ;
\begin{scope}
\clip (0.5,-0.25) -- (0.3,-0.25) -- (0.3,0.15) -- (0.5,0.15) -- cycle ;
\draw (0.5,-0.05) circle (0.1) ;
\end{scope}
\draw (0,0.65) -- (0,1.2) ;
\fill [white] (0.1,1.3) circle (0.05) ;
\begin{scope}
\clip (0,1.1) -- (0,1.5) -- (0.2,1.5) -- (0.2,1.1) -- cycle ;
\draw (0,1.3) circle (0.1) ;
\end{scope}
\fill [black] (0,0.65) circle (0.05) ;
\draw (3,0.65) node{$=$} ;

\begin{scope}[xshift=7cm]
\draw [thick] (0,1.5) ellipse (2 and 1) ;

\fill [white] (1.67,0.97) circle (0.15) ;
\fill [white] (-0.5,0.53) circle (0.15) ;

\draw [thick] (-2,0) .. controls +(0,1) and +(-0.7,0) .. (-0.5,1.5) ;
\draw [thick] (0,-0.3) .. controls +(-0.8,0) and +(-1.2,0) .. (0.5,1.5) ;
\fill [white] (0,1.37) circle (0.15) ;
\draw [thick] (-0.5,1.5) .. controls +(1.2,0) and +(0.8,0) .. (0,-0.3) ;
\draw [thick] (0.5,1.5) .. controls +(0.7,0) and +(0,1) .. (2,0) ;
\draw [thick] (2,0) .. controls +(0,-1) and +(1,0) .. (0,-1.5) ;
\draw [thick] (0,-1.5) .. controls +(-1,0) and +(0,-1) .. (-2,0) ;
\draw [thick,-stealth] (0.05,-1.5) -- (-0.05,-1.5) ;
\draw (0,-1.95) node{$1$} ;

\fill [white] (-1.67,0.97) circle (0.15) ;
\fill [white] (0.5,0.53) circle (0.15) ;

\begin{scope}
\clip (-1.9,1.2) -- (-1.4,1.2) -- (-1.4,0.7) -- (-1.9,0.7) -- cycle ;
\draw [thick] (0,1.5) ellipse (2 and 1) ;
\end{scope}

\begin{scope}
\clip (0.25,0.8) -- (0.75,0.8) -- (0.75,0.3) -- (0.25,0.3) -- cycle ;
\draw [thick] (0,1.5) ellipse (2 and 1) ;
\end{scope}

\draw [thick,-stealth] (-0.05,2.5) -- (0.05,2.5) ;
\draw (0,2.9) node{$2$} ;
\end{scope}
\end{tikzpicture}
\caption{A link which is $w_3$--concordant but not $C_3$--concordant to the unlink}\label{Figex}
\end{figure}
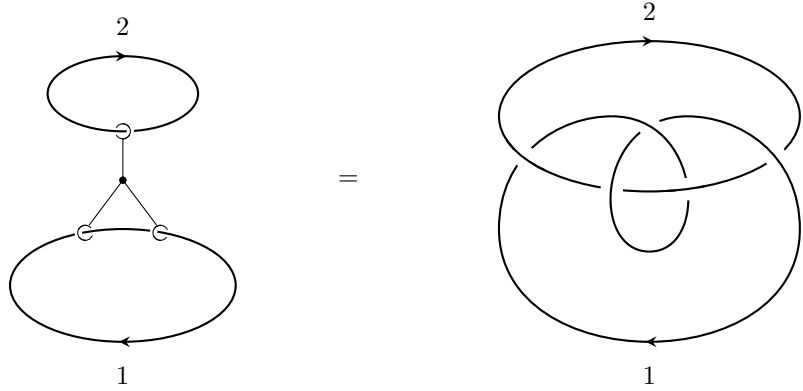

\newpage
\bibliographystyle{plain}
\bibliography{biblio}

\end{document}